\newcolumntype{R}[1]{>{\raggedleft\arraybackslash }b{#1}}
\newcolumntype{L}[1]{>{\raggedright\arraybackslash }b{#1}}
\newcolumntype{C}[1]{>{\centering\arraybackslash }b{#1}}
\def\overparenthesis#1{\mathop{\vbox{\ialign{##\crcr\noalign{\kern3\p@}\downparenthfill\crcr\noalign{\kern3\p@\nointerlineskip}$\hfil\displaystyle{#1}\hfil$\crcr}}}\limits}
\def\underparenthesis#1{\mathop{\vtop{\ialign{##\crcr$\hfil\displaystyle{#1}\hfil$\crcr\noalign{\kern3\p@\nointerlineskip}\upparenthfill\crcr\noalign{\kern3\p@}}}}\limits}
\def\downparenthfill{$\m@th\braceld\leaders\vrule\hfill\bracerd$}
\def\upparenthfill{$\m@th\bracelu\leaders\vrule\hfill\braceru$}
\newtheorem{theorem}{Theorem}
\newtheorem{lemma}[theorem]{Lemma}
\newtheorem{definition}[theorem]{Definition}
\newtheorem{corollary}[theorem]{Corollary}
\newtheorem{proposition}[theorem]{Proposition}
\newcommand{\ZZ}{\ensuremath{\mathbb{Z}}}
\newcommand{\NN}{\ensuremath{\mathbb{N}}}
\newcommand{\limset}{\Omega}
\newcommand{\gls}{\Tilde{\omega}}
\DeclareMathOperator{\supp}{supp}
\newcommand{\meas}{\mathcal{M}}
\newcommand{\borel}{\mathcal{B}}
\title{Limit dynamics of elementary cellular automaton 18}
\author{Hervé Sabrié \\ Computer Science Department \\ ENS Paris Saclay \\ Paris, France \and Ilkka Törmä \\ Department of Mathematics and Statistics \\ University of Turku \\ Turku, Finland \\ \href{mailto:iatorm@utu.fi}{iatorm@utu.fi}}
\begin{document}
\maketitle

\begin{abstract}
    We study the the asymptotic dynamics of elementary cellular automaton 18 through its limit set, generic limit set and $\mu$-limit set.
    The dynamics of rule 18 are characterized by persistent local patterns known as kinks.
    We characterize the configurations of the generic limit set containing at most two kinks.
    As a corollary, we show that the three limit sets of rule 18 are distinct.
\end{abstract}

\subsection*{Acknowledgements}

Ilkka Törmä was supported by the Academy of Finland under grant 346566.

\section{Introduction}

Cellular automata (CA) are a class of discrete dynamical systems consisting of an infinite line (or more generally a multidimensional grid) of ``cells'', each of which holds one of finitely many states.
Each cell evolves by an identical rule, and its next state is determined by the current state of itself and some finite set of neighboring cells.
The 256 \emph{elementary} cellular automata \cite{Wo83} are those with only two states and neighborhood $N = \{-1,0,1\}$, meaning that each cell can communicate only with its nearest neighbors.
They are arguably the simplest cellular automata that exhibit complex dynamics.

We focus on the dynamics of elementary CA number 18.
It has been studied as a model of mutually annihilating particles with random movement as early as 1983 \cite{GRA83}, and by several authors since \cite{JEN90,KUR03}.
Grassberger \cite{GRA83} and Lind \cite{LIN84} conjectured that if the CA is initialized from a uniformly random configuration, then the density of particles almost surely decreases to 0 with the same speed as mutually annihilating independent random walks.
Progress toward the conjecture has been slow.
K\r{u}rka claimed in \cite{KUR03} that the density indeed approaches 0, but we show that the presented proof is faulty (see the discussion after Proposition \ref{prop:stable-extensions-f}).

The long-term fate of a dynamical system can be studied via its \emph{limit set}, or eventual image.
The limit set of a cellular automaton often contains transient phenomena, so several variants have been proposed to better capture the intuitive notion of ``what typically happens after a long time''.
These include the \emph{$\mu$-limit set} \cite{KM00} and the \emph{generic limit set}, introduced by Milnor in \cite{MIL85} and studied first in the context of cellular automata in \cite{DJEGUI19}.
As our main results, we characterize the words with at most two particles that occur in the generic limit set of rule 18, and show that the three limit sets are all distinct.
We also show that determining the generic limit set of rule 18 is a viable strategy toward (partially) resolving the conjecture of Grassberger and Lind.

\section{Definitions}

Let $A$ be a finite alphabet.
The set of finite words over $A$ is denoted by $A^*$, and concatenation of words $v, w \in A^*$ is denoted by either $v w$ or $v \cdot w$.
This notation is extended to infinite words: if $v \in A^*$ and $x \in A^\NN$ is a right-infinite word, then $v x \in A^\NN$; if $y \in A^{-\NN}$ is a left-infinite word, then $y v \in A^{-\NN}$; and $y x \in A^\ZZ$ is a bi-infinite word.
The \emph{full shift} is the set $A^\ZZ$ of all bi-infinite sequences over $A$, equipped with the prodiscrete topology, which is generated by the \emph{cylinder sets} $[w]_i = \{x \in A^\ZZ \mid x_{[i, i+|w|-1]} = w\}$ for $w \in A^*$ and $i \in \ZZ$.
The \emph{shift map} $\sigma : A^\ZZ \to A^\ZZ$ is defined by $\sigma(x)_i = x_{i+1}$.
A \emph{subshift} is a topologically closed set $X \subseteq A^\ZZ$ that is shift-invariant: $\sigma(X) = X$.
A word $w \in A^*$ \emph{occurs} in $X$ if $X \cap [w]_0 \neq \emptyset$.
The \emph{language} of $X$ is the set of words occurring in it; a subshift is defined by its language.

A \emph{cellular automaton} (CA) is a continuous function $f : A^\ZZ \to A^\ZZ$ that commutes with the shift: $f \circ \sigma = \sigma \circ f$.
By the Curtis-Hedlund-Lyndon theorem \cite{HED69}, every CA has a finite \emph{radius} $r \geq 0$ and a \emph{local rule} $F : A^{2r+1} \to A$ such that $f(x)_i = F(\sigma^i(x)|_{[-r,r]})$ for all $x \in A^\ZZ$ and $i \in \ZZ$.
Once $r$ is fixed, we can apply the CA to words $w \in A^n$ as well as configurations: $f(w) = F(w_{[0,2r]} w_{[1,2r+1]} \cdots w_{[n-2r-1,n-1]})$.
Note that each application of $f$ shortens the word by $2r$ symbols.

The \emph{elementary cellular automata} (ECA), defined in \cite{Wo83}, are those with alphabet $A = \{0,1\}$ and radius $1$.
They are numbered from 0 to 255: the local rule $F$ of ECA $n$ maps $a b c \in \{0,1\}^3$ to bit number $4a + 2b + c$ in the binary representation of $n$ (extended to 3 digits by prepending 0s if necessary).

A \emph{topological dynamical system} is a pair $(X, T)$, where $X$ is a compact metric space and $T : X \to X$ is a continuous function.
Cellular automata are examples of such systems.

%We consider the elementary cellular automata of rule 18 and its associated function on words and configurations $f = f_{18}$.

A \emph{Borel probability measure} on a topological space $X$ is a function $\mu : \borel(X) \to [0,1]$, where $\borel(X)$ is the class of Borel subsets of $X$, satisfying $\mu(X) = 1$, $\mu(\emptyset) = 0$ and $\mu(\bigcup_{n \in \NN} X_n) = \sum_{n \in \NN} \mu(X_n)$ whenever the sets $X_n$ are pairwise disjoint.
The set of Borel probability measures of $X$ is denoted $\meas(X)$.
The \emph{support} of $\mu \in \meas(X)$ is the smallest closed subset $\supp(\mu) \subseteq X$ with measure $1$.
If $(X,T)$ is a dynamical system, the dynamics $T$ can be extended to a function $T : \meas(X) \to \meas(X)$ by $(T \mu)(K) = \mu(T^{-1}(K))$.
We denote by $\meas_T(X)$ the subset of measures that are $T$-invariant, meaning $T \mu = \mu$.
A measure $\mu \in \meas_T(X)$ is \emph{$T$-ergodic} if $T(A) = A$ for a Borel set $A \subseteq X$ implies $\mu(A) \in \{0,1\}$.

We are mostly interested in shift-invariant measures on $A^\ZZ$, which are characterized by their values on cylinders of the form $[w]_0$.
For convenience, we denote $\mu(w) = \mu([w]_0)$.
The \emph{uniform Bernoulli measure} is given by $\mu(w) = |A|^{-|w|}$ for all $w \in A^*$, and it is shift-ergodic.
It corresponds to the random process where every symbol of an infinite configuration is chosen independently and uniformly at random.
The set $\meas(X)$ can be endowed with several topologies; we use the weak topology, which for the full shift is equivalent to convergence on cylinder sets: $(\mu_n)$ weakly converges to $\mu$ if and only if $\mu_n([w]_i) \to \mu([w]_i)$ for all cylinders $[w]_i$.

%\begin{definition}
%Let $x, y \in \{0,1\}^{\NN}$ and $w \in \{0,1\}^*$.
%\begin{itemize}
%    \item We call $w\cdot x \in \{0,1\}^{\NN} = w_0 w_1 \dots w_{|w|-1} x_0 x_1 \dots$.
%    \item We define $x \cdot y \in \{0,1\}^{\ZZ}$ by $\forall n \in \ZZ$, $n < 0 \impl (x %\cdot y)_n = x_{-(n-1)}$, $n \geq 0 \impl (x \cdot y)_n = y_n$.
%    \item For $x, y \in \{0,1\}^{\NN}$ and $w \in \{0,1\}^*$ we write $x \cdot w \cdot y$ for $x \cdot (w \cdot y)$.
%\end{itemize}
%\end{definition}

\section{Limit sets and variants}

We now define the limit sets of a dynamical system that will be used in this paper.

\begin{definition}[\cite{MIL85,KM00}]
Let $(X, T)$ be a topological dynamical system.
\begin{itemize}
    \item The \emph{limit set} of $T$ is $\limset(T) = \bigcap_{n \in \NN} T^n(X)$.
    \item The \emph{realm of attraction} of a subset $K \subset X$ is the set
    \[ D_T(K) = \{ x \in X \mid \text{all limit points of $(T^n(x))_{n \in \NN}$ are in $K$} \}. \]
    The \emph{generic limit set} of $T$ is the intersection of all closed subsets $K \subset X$ such that $D_T(K)$ is comeager.
    \item Let $\mu$ be a Borel probability measure on $X$.
    The \emph{$\mu$-limit set} of $T$ is the set $\limset_\mu(T) = \overline{\bigcup_\nu \supp(\nu)}$, where $\nu$ ranges over the weak limit points of the sequence $(T^n \mu)_{n \in \NN}$.
\end{itemize}
\end{definition}

The limit set can also be called the \emph{eventual image}, and it is purely set-theoretic.
The generic limit set is its topological refinement that aims to capture the asymptotic dynamics of the system from a generic (in a topological sense) initial condition.
The $\mu$-limit set captures those phenomena whose probability of occurrence does not vanish in the limit.

If $f : A^\ZZ \to A^\ZZ$ is a cellular automaton and $\mu$ is a shift-invariant measure on $A^\ZZ$, then a given word $w$ occurs in the $\mu$-limit set $\limset_\mu(f)$ if and only if $\mu(f^{-n}(w))$ does not converge to $0$ as $n$ grows.
In particular, if $\mu$ is the uniform Bernoulli measure, then $w$ occurs in $\limset_\mu(f)$ if and only if $\limsup_n |f^{-n}(w)| / |A|^{|w|+2nr} > 0$.
Generic limit sets of cellular automata have a combinatorial characterization as well.

\begin{lemma}[\cite{TOR20}]
\label{lemma:combchar}
Let $f : A^\ZZ \to A^\ZZ$ be a cellular automaton and $w \in A^*$.
Then $w$ occurs in the generic limit set $\gls(f)$ if and only if there exists a ``seed'' word $s \in A^*$ and a position $i \in \ZZ$ such that for all $u, v \in A^*$, there are infinitely many $n \in \NN$ with $f^n([u s v]_{i - |u|}) \cap [w]_0 \neq \emptyset$.
\end{lemma}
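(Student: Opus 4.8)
\medskip
\noindent\textit{Proof proposal.}
The plan is to translate the topological definition of $\gls(f)$ into the stated combinatorial condition by working with cylinders, which form a countable clopen base of $A^\ZZ$; throughout we use that $A^\ZZ$ is a Cantor space, hence a second-countable Baire space, and that preimages of clopen sets under the continuous map $f$ are clopen. First I would record the elementary dictionary between sub-cylinders of a fixed cylinder $[s]_i$ and words of the form $u s v$: a cylinder $[s']_j$ is contained in $[s]_i$ exactly when $s' = u s v$ for some $u, v \in A^*$ with $j = i - |u|$, so quantifying over all $u, v \in A^*$ is the same as quantifying over all sub-cylinders of $[s]_i$. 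Under this dictionary the claimed condition reads: there is a cylinder $C = [s]_i$ such that every sub-cylinder $C' \subseteq C$ satisfies $f^n(C') \cap [w]_0 \neq \emptyset$ for infinitely many $n$. Each of the two implications in the lemma will be proved in contrapositive form.

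To show that the combinatorial condition implies $w \in \gls(f)$, I would assume the condition fails, so every cylinder $C$ contains a sub-cylinder $C'$ with $f^n(C') \cap [w]_0 = \emptyset$ for all large $n$. The union $G$ of all such sub-cylinders is then dense and open, hence comeager, and for each $x \in G$ the orbit $(f^n(x))_n$ eventually leaves the clopen set $[w]_0$, so none of its limit points lies in $[w]_0$. Taking $K$ to be the closure of the set of all limit points of orbits of points of $G$, we obtain a closed set disjoint from $[w]_0$ with $D_f(K) \supseteq G$ comeager; hence $K$ lies in the family defining $\gls(f)$, so $\gls(f) \subseteq K$ and $w$ does not occur in $\gls(f)$.

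For the reverse implication I would assume $[w]_0 \cap \gls(f) = \emptyset$. Since $[w]_0$ is compact and $\gls(f)$ is an intersection of closed sets with comeager realm, the finite intersection property yields finitely many such sets $K_1, \dots, K_m$ with $[w]_0 \cap (K_1 \cap \dots \cap K_m) = \emptyset$; put $K = \bigcap_j K_j$, which is closed, disjoint from $[w]_0$, and has comeager realm because $D_f(K) \supseteq \bigcap_j D_f(K_j)$. As $[w]_0$ is clopen and disjoint from $K$, every $x \in D_f(K)$ has an orbit eventually avoiding $[w]_0$, so $D_f(K)$ is contained in the $F_\sigma$ set $E_w = \bigcup_N \bigcap_{n \ge N} f^{-n}(A^\ZZ \setminus [w]_0)$, which is therefore comeager. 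Now for an arbitrary cylinder $C = [s]_i$, the set $E_w \cap C$ is comeager in the Baire space $C$, so by the Baire category theorem one of the closed pieces $C \cap \bigcap_{n \ge N} f^{-n}(A^\ZZ \setminus [w]_0)$ has nonempty interior in $C$; this interior contains a sub-cylinder $C' \subseteq C$ with $f^n(C') \cap [w]_0 = \emptyset$ for all $n \ge N$. Hence $C$ fails the displayed property, and since $C$ was arbitrary the combinatorial condition fails.

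The cylinder/word dictionary and the compactness step are routine. The step I expect to need the most care is the application of the Baire category theorem in the reverse direction, and specifically the fact that it must be carried out inside the clopen subspace $C = [s]_i$ rather than in all of $A^\ZZ$: this is what guarantees that the sub-cylinder it produces is genuinely contained in $C$, hence corresponds to an actual pair $(u, v)$ extending the seed $s$. A secondary point to verify carefully, used in both directions, is the interplay between the clopenness of $[w]_0$ and limit points of orbits, i.e. that an orbit eventually avoids $[w]_0$ if and only if it has no limit point in $[w]_0$.
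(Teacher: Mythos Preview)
The paper does not supply a proof of this lemma; it is quoted from \cite{TOR20} and used as a black box. So there is no in-paper argument to compare against.

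That said, your proposal is a correct and standard proof of the characterization. The dictionary between sub-cylinders of $[s]_i$ and extensions $u s v$ is exactly right. In the direction ``condition fails $\Rightarrow$ $w \notin \gls(f)$'', your construction of the dense open $G$ and of $K$ as the closure of the union of $\omega$-limit sets of points of $G$ is sound; note explicitly that $K \cap [w]_0 = \emptyset$ because $[w]_0$ is open and the $\omega$-limit sets already miss $[w]_0$, which you do observe. In the other direction, the compactness step reducing to finitely many $K_j$ is the right move, and your use of Baire inside the clopen subspace $C$ to extract a sub-cylinder from the $F_\sigma$ set $E_w$ is the key technical point --- and you have flagged it correctly as the place requiring care. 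The auxiliary fact that an orbit with no limit point in the clopen (hence compact) set $[w]_0$ must eventually leave $[w]_0$ is exactly the compactness argument you indicate.

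In short: your proof is correct and self-contained; the paper itself defers to the literature here.
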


All three limit sets are nonempty subshifts of $A^\ZZ$.
The limit set contains the other two, but in general neither of the generic limit set or the $\mu$-limit set contains the other.

\begin{lemma}[Proposition 6.2 of \cite{DJEGUI19}]
\label{lem:three-inclusion}
Let $f : A^\ZZ \to A^\ZZ$ be a cellular automaton.
Then $\gls(f) \subseteq \limset(f)$ and $\limset_\mu(f) \subseteq \limset(f)$.
%If $\mu$ is a shift-ergodic probability measure on $A^\ZZ$, then $\limset_\mu(f) \subseteq \gls(f)$.
\end{lemma}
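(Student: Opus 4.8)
The plan is to prove both inclusions directly from the definitions, using the fact that $\limset(f)$ is a closed shift-invariant set.

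\medskip

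\noindent\textbf{The $\mu$-limit set inclusion.}
First I would observe that $\limset(f) = \bigcap_{n \in \NN} f^n(A^\ZZ)$ is closed, since each $f^n(A^\ZZ)$ is the continuous image of a compact set and hence closed. Now let $\nu$ be any weak limit point of $(f^n \mu)_{n \in \NN}$, say $\nu = \lim_{k} f^{n_k} \mu$ with $n_k \to \infty$. I claim $\supp(\nu) \subseteq \limset(f)$. Fix $m \in \NN$; it suffices to show $\supp(\nu) \subseteq f^m(A^\ZZ)$, because $\limset(f)$ is the intersection over all $m$ (and $f^{n}(A^\ZZ)$ is decreasing in $n$). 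For $k$ large enough we have $n_k \geq m$, so $f^{n_k}\mu$ is supported on $f^{n_k}(A^\ZZ) \subseteq f^m(A^\ZZ)$, i.e. $(f^{n_k}\mu)\bigl(f^m(A^\ZZ)\bigr) = 1$. Since $f^m(A^\ZZ)$ is closed, the portmanteau theorem (upper semicontinuity of measure on closed sets under weak convergence) gives $\nu\bigl(f^m(A^\ZZ)\bigr) \geq \limsup_k (f^{n_k}\mu)\bigl(f^m(A^\ZZ)\bigr) = 1$, hence $\supp(\nu) \subseteq f^m(A^\ZZ)$ because the support is the smallest closed full-measure set. Taking the union over all such $\nu$ and then the closure, and using that $\limset(f)$ is closed, yields $\limset_\mu(f) \subseteq \limset(f)$.

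\medskip

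\noindent\textbf{The generic limit set inclusion.}
By definition $\gls(f)$ is the intersection of all closed $K$ with $D_f(K)$ comeager, so it is enough to exhibit one such $K$ contained in $\limset(f)$; the natural choice is $K = \limset(f)$ itself. I must check that $D_f(\limset(f))$ is comeager — in fact I will show it is all of $A^\ZZ$. Take any $x \in A^\ZZ$ and any limit point $y$ of $(f^n(x))_n$, say $y = \lim_k f^{n_k}(x)$. For each fixed $m$, once $n_k \geq m$ we have $f^{n_k}(x) \in f^{n_k}(A^\ZZ) \subseteq f^m(A^\ZZ)$, and since $f^m(A^\ZZ)$ is closed the limit $y$ lies in it; intersecting over $m$ gives $y \in \limset(f)$. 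Thus every $x$ has all its $f$-orbit limit points in $\limset(f)$, so $D_f(\limset(f)) = A^\ZZ$, which is trivially comeager. Therefore $\gls(f) \subseteq \limset(f)$.

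\medskip

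\noindent There is no real obstacle here; the only point requiring a little care is the measure-theoretic step, where one must invoke weak convergence correctly — convergence on cylinder sets does not immediately give convergence on the (generally non-cylinder) closed set $f^m(A^\ZZ)$, so one should either appeal to the portmanteau theorem for upper semicontinuity on closed sets, or approximate $f^m(A^\ZZ)$ from outside by finite unions of cylinders (using that $f^m(A^\ZZ)$, being a subshift, is an intersection of clopen cylinder unions). Both routes are routine, and the set-theoretic inclusion for the generic limit set is immediate once one notes $D_f(\limset(f))$ is the whole space.
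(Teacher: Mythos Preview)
Your proof is correct. The paper does not actually prove this lemma; it simply cites it as Proposition~6.2 of \cite{DJEGUI19}, so there is no ``paper's own proof'' to compare against. Your argument is the standard one: for $\gls(f)$ you observe that $D_f(\limset(f)) = A^\ZZ$ (every orbit accumulation point lies in every $f^m(A^\ZZ)$ by closedness), and for $\limset_\mu(f)$ you push each weak limit measure into $f^m(A^\ZZ)$ via portmanteau. Both steps are routine and your remark about needing upper semicontinuity on closed sets (rather than mere convergence on cylinders) is the only subtlety, and you handle it correctly.
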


\section{Defect dynamics of rule 18}

The local rule of ECA 18 is shown in Table \ref{tab:rule18}: the patterns 001 and 100 produce a 1, and all other patterns produce a 0.
In the remainder of this paper, we denote this CA by $f_{18}$, or just $f$ when there is no danger of confusion.
Figure \ref{fig:example-run} depicts a sample spacetime configuration of $f$.
In this figure and in the following, time advances downward, a black square represents a 1 and a white square represent a 0.
Hatched squares are undetermined symbols.

\begin{table}[htp]
    \centering
    \begin{tabular}{cccccccc}
        000 & 001 & 010 & 011 & 100 & 101 & 110 & 111 \\
         0  &  1  &  0  &  0  &  1  &  0  &  0  &  0 
    \end{tabular}
    \caption{The local rule of ECA 18.}
    \label{tab:rule18}
\end{table}

\begin{figure}[htp]
\centering
\begin{tikzpicture}[scale=0.22]
\draw[step=1cm, gray, very thin] (-0.1,-0.1) grid (45.1,18.1);
\fill[black!80] (4,0) rectangle ++(1,1);
\fill[black!80] (14,0) rectangle ++(1,1);
\fill[black!80] (16,0) rectangle ++(1,1);
\fill[black!80] (20,0) rectangle ++(1,1);
\fill[black!80] (24,0) rectangle ++(1,1);
\fill[black!80] (32,0) rectangle ++(1,1);
\fill[black!80] (34,0) rectangle ++(1,1);
\fill[black!80] (36,0) rectangle ++(1,1);
\fill[black!80] (38,0) rectangle ++(1,1);
\fill[black!80] (40,0) rectangle ++(1,1);
\fill[black!80] (42,0) rectangle ++(1,1);
\fill[black!80] (5,1) rectangle ++(1,1);
\fill[black!80] (6,1) rectangle ++(1,1);
\fill[black!80] (8,1) rectangle ++(1,1);
\fill[black!80] (10,1) rectangle ++(1,1);
\fill[black!80] (12,1) rectangle ++(1,1);
\fill[black!80] (13,1) rectangle ++(1,1);
\fill[black!80] (17,1) rectangle ++(1,1);
\fill[black!80] (19,1) rectangle ++(1,1);
\fill[black!80] (25,1) rectangle ++(1,1);
\fill[black!80] (27,1) rectangle ++(1,1);
\fill[black!80] (29,1) rectangle ++(1,1);
\fill[black!80] (31,1) rectangle ++(1,1);
\fill[black!80] (35,1) rectangle ++(1,1);
\fill[black!80] (39,1) rectangle ++(1,1);
\fill[black!80] (43,1) rectangle ++(1,1);
\fill[black!80] (44,1) rectangle ++(1,1);
\fill[black!80] (0,2) rectangle ++(1,1);
\fill[black!80] (2,2) rectangle ++(1,1);
\fill[black!80] (4,2) rectangle ++(1,1);
\fill[black!80] (7,2) rectangle ++(1,1);
\fill[black!80] (11,2) rectangle ++(1,1);
\fill[black!80] (14,2) rectangle ++(1,1);
\fill[black!80] (16,2) rectangle ++(1,1);
\fill[black!80] (20,2) rectangle ++(1,1);
\fill[black!80] (22,2) rectangle ++(1,1);
\fill[black!80] (24,2) rectangle ++(1,1);
\fill[black!80] (28,2) rectangle ++(1,1);
\fill[black!80] (32,2) rectangle ++(1,1);
\fill[black!80] (34,2) rectangle ++(1,1);
\fill[black!80] (40,2) rectangle ++(1,1);
\fill[black!80] (42,2) rectangle ++(1,1);
\fill[black!80] (3,3) rectangle ++(1,1);
\fill[black!80] (8,3) rectangle ++(1,1);
\fill[black!80] (10,3) rectangle ++(1,1);
\fill[black!80] (15,3) rectangle ++(1,1);
\fill[black!80] (21,3) rectangle ++(1,1);
\fill[black!80] (25,3) rectangle ++(1,1);
\fill[black!80] (27,3) rectangle ++(1,1);
\fill[black!80] (33,3) rectangle ++(1,1);
\fill[black!80] (41,3) rectangle ++(1,1);
\fill[black!80] (0,4) rectangle ++(1,1);
\fill[black!80] (2,4) rectangle ++(1,1);
\fill[black!80] (9,4) rectangle ++(1,1);
\fill[black!80] (16,4) rectangle ++(1,1);
\fill[black!80] (18,4) rectangle ++(1,1);
\fill[black!80] (20,4) rectangle ++(1,1);
\fill[black!80] (26,4) rectangle ++(1,1);
\fill[black!80] (34,4) rectangle ++(1,1);
\fill[black!80] (36,4) rectangle ++(1,1);
\fill[black!80] (38,4) rectangle ++(1,1);
\fill[black!80] (40,4) rectangle ++(1,1);
\fill[black!80] (3,5) rectangle ++(1,1);
\fill[black!80] (5,5) rectangle ++(1,1);
\fill[black!80] (7,5) rectangle ++(1,1);
\fill[black!80] (8,5) rectangle ++(1,1);
\fill[black!80] (17,5) rectangle ++(1,1);
\fill[black!80] (21,5) rectangle ++(1,1);
\fill[black!80] (23,5) rectangle ++(1,1);
\fill[black!80] (25,5) rectangle ++(1,1);
\fill[black!80] (35,5) rectangle ++(1,1);
\fill[black!80] (39,5) rectangle ++(1,1);
\fill[black!80] (0,6) rectangle ++(1,1);
\fill[black!80] (2,6) rectangle ++(1,1);
\fill[black!80] (6,6) rectangle ++(1,1);
\fill[black!80] (9,6) rectangle ++(1,1);
\fill[black!80] (11,6) rectangle ++(1,1);
\fill[black!80] (13,6) rectangle ++(1,1);
\fill[black!80] (14,6) rectangle ++(1,1);
\fill[black!80] (16,6) rectangle ++(1,1);
\fill[black!80] (22,6) rectangle ++(1,1);
\fill[black!80] (26,6) rectangle ++(1,1);
\fill[black!80] (28,6) rectangle ++(1,1);
\fill[black!80] (30,6) rectangle ++(1,1);
\fill[black!80] (32,6) rectangle ++(1,1);
\fill[black!80] (34,6) rectangle ++(1,1);
\fill[black!80] (40,6) rectangle ++(1,1);
\fill[black!80] (42,6) rectangle ++(1,1);
\fill[black!80] (43,6) rectangle ++(1,1);
\fill[black!80] (1,7) rectangle ++(1,1);
\fill[black!80] (7,7) rectangle ++(1,1);
\fill[black!80] (8,7) rectangle ++(1,1);
\fill[black!80] (12,7) rectangle ++(1,1);
\fill[black!80] (15,7) rectangle ++(1,1);
\fill[black!80] (23,7) rectangle ++(1,1);
\fill[black!80] (25,7) rectangle ++(1,1);
\fill[black!80] (29,7) rectangle ++(1,1);
\fill[black!80] (33,7) rectangle ++(1,1);
\fill[black!80] (41,7) rectangle ++(1,1);
\fill[black!80] (44,7) rectangle ++(1,1);
\fill[black!80] (2,8) rectangle ++(1,1);
\fill[black!80] (4,8) rectangle ++(1,1);
\fill[black!80] (6,8) rectangle ++(1,1);
\fill[black!80] (9,8) rectangle ++(1,1);
\fill[black!80] (11,8) rectangle ++(1,1);
\fill[black!80] (16,8) rectangle ++(1,1);
\fill[black!80] (18,8) rectangle ++(1,1);
\fill[black!80] (20,8) rectangle ++(1,1);
\fill[black!80] (22,8) rectangle ++(1,1);
\fill[black!80] (26,8) rectangle ++(1,1);
\fill[black!80] (28,8) rectangle ++(1,1);
\fill[black!80] (34,8) rectangle ++(1,1);
\fill[black!80] (36,8) rectangle ++(1,1);
\fill[black!80] (38,8) rectangle ++(1,1);
\fill[black!80] (40,8) rectangle ++(1,1);
\fill[black!80] (1,9) rectangle ++(1,1);
\fill[black!80] (5,9) rectangle ++(1,1);
\fill[black!80] (10,9) rectangle ++(1,1);
\fill[black!80] (17,9) rectangle ++(1,1);
\fill[black!80] (21,9) rectangle ++(1,1);
\fill[black!80] (27,9) rectangle ++(1,1);
\fill[black!80] (35,9) rectangle ++(1,1);
\fill[black!80] (39,9) rectangle ++(1,1);
\fill[black!80] (0,10) rectangle ++(1,1);
\fill[black!80] (6,10) rectangle ++(1,1);
\fill[black!80] (7,10) rectangle ++(1,1);
\fill[black!80] (9,10) rectangle ++(1,1);
\fill[black!80] (18,10) rectangle ++(1,1);
\fill[black!80] (20,10) rectangle ++(1,1);
\fill[black!80] (28,10) rectangle ++(1,1);
\fill[black!80] (30,10) rectangle ++(1,1);
\fill[black!80] (32,10) rectangle ++(1,1);
\fill[black!80] (34,10) rectangle ++(1,1);
\fill[black!80] (40,10) rectangle ++(1,1);
\fill[black!80] (42,10) rectangle ++(1,1);
\fill[black!80] (43,10) rectangle ++(1,1);
\fill[black!80] (1,11) rectangle ++(1,1);
\fill[black!80] (3,11) rectangle ++(1,1);
\fill[black!80] (5,11) rectangle ++(1,1);
\fill[black!80] (8,11) rectangle ++(1,1);
\fill[black!80] (19,11) rectangle ++(1,1);
\fill[black!80] (29,11) rectangle ++(1,1);
\fill[black!80] (33,11) rectangle ++(1,1);
\fill[black!80] (41,11) rectangle ++(1,1);
\fill[black!80] (44,11) rectangle ++(1,1);
\fill[black!80] (2,12) rectangle ++(1,1);
\fill[black!80] (6,12) rectangle ++(1,1);
\fill[black!80] (7,12) rectangle ++(1,1);
\fill[black!80] (20,12) rectangle ++(1,1);
\fill[black!80] (21,12) rectangle ++(1,1);
\fill[black!80] (23,12) rectangle ++(1,1);
\fill[black!80] (25,12) rectangle ++(1,1);
\fill[black!80] (27,12) rectangle ++(1,1);
\fill[black!80] (28,12) rectangle ++(1,1);
\fill[black!80] (34,12) rectangle ++(1,1);
\fill[black!80] (36,12) rectangle ++(1,1);
\fill[black!80] (38,12) rectangle ++(1,1);
\fill[black!80] (40,12) rectangle ++(1,1);
\fill[black!80] (3,13) rectangle ++(1,1);
\fill[black!80] (5,13) rectangle ++(1,1);
\fill[black!80] (8,13) rectangle ++(1,1);
\fill[black!80] (10,13) rectangle ++(1,1);
\fill[black!80] (12,13) rectangle ++(1,1);
\fill[black!80] (14,13) rectangle ++(1,1);
\fill[black!80] (15,13) rectangle ++(1,1);
\fill[black!80] (17,13) rectangle ++(1,1);
\fill[black!80] (19,13) rectangle ++(1,1);
\fill[black!80] (22,13) rectangle ++(1,1);
\fill[black!80] (26,13) rectangle ++(1,1);
\fill[black!80] (29,13) rectangle ++(1,1);
\fill[black!80] (31,13) rectangle ++(1,1);
\fill[black!80] (33,13) rectangle ++(1,1);
\fill[black!80] (37,13) rectangle ++(1,1);
\fill[black!80] (41,13) rectangle ++(1,1);
\fill[black!80] (42,13) rectangle ++(1,1);
\fill[black!80] (44,13) rectangle ++(1,1);
\fill[black!80] (4,14) rectangle ++(1,1);
\fill[black!80] (9,14) rectangle ++(1,1);
\fill[black!80] (13,14) rectangle ++(1,1);
\fill[black!80] (16,14) rectangle ++(1,1);
\fill[black!80] (20,14) rectangle ++(1,1);
\fill[black!80] (21,14) rectangle ++(1,1);
\fill[black!80] (27,14) rectangle ++(1,1);
\fill[black!80] (28,14) rectangle ++(1,1);
\fill[black!80] (32,14) rectangle ++(1,1);
\fill[black!80] (38,14) rectangle ++(1,1);
\fill[black!80] (40,14) rectangle ++(1,1);
\fill[black!80] (43,14) rectangle ++(1,1);
\fill[black!80] (1,15) rectangle ++(1,1);
\fill[black!80] (3,15) rectangle ++(1,1);
\fill[black!80] (10,15) rectangle ++(1,1);
\fill[black!80] (12,15) rectangle ++(1,1);
\fill[black!80] (17,15) rectangle ++(1,1);
\fill[black!80] (19,15) rectangle ++(1,1);
\fill[black!80] (22,15) rectangle ++(1,1);
\fill[black!80] (24,15) rectangle ++(1,1);
\fill[black!80] (26,15) rectangle ++(1,1);
\fill[black!80] (29,15) rectangle ++(1,1);
\fill[black!80] (31,15) rectangle ++(1,1);
\fill[black!80] (39,15) rectangle ++(1,1);
\fill[black!80] (44,15) rectangle ++(1,1);
\fill[black!80] (2,16) rectangle ++(1,1);
\fill[black!80] (11,16) rectangle ++(1,1);
\fill[black!80] (18,16) rectangle ++(1,1);
\fill[black!80] (23,16) rectangle ++(1,1);
\fill[black!80] (27,16) rectangle ++(1,1);
\fill[black!80] (28,16) rectangle ++(1,1);
\fill[black!80] (32,16) rectangle ++(1,1);
\fill[black!80] (34,16) rectangle ++(1,1);
\fill[black!80] (36,16) rectangle ++(1,1);
\fill[black!80] (38,16) rectangle ++(1,1);
\fill[black!80] (0,17) rectangle ++(1,1);
\fill[black!80] (1,17) rectangle ++(1,1);
\fill[black!80] (12,17) rectangle ++(1,1);
\fill[black!80] (13,17) rectangle ++(1,1);
\fill[black!80] (15,17) rectangle ++(1,1);
\fill[black!80] (16,17) rectangle ++(1,1);
\fill[black!80] (17,17) rectangle ++(1,1);
\fill[black!80] (24,17) rectangle ++(1,1);
\fill[black!80] (25,17) rectangle ++(1,1);
\fill[black!80] (26,17) rectangle ++(1,1);
\fill[black!80] (29,17) rectangle ++(1,1);
\fill[black!80] (30,17) rectangle ++(1,1);
\fill[black!80] (31,17) rectangle ++(1,1);
\fill[black!80] (35,17) rectangle ++(1,1);
\fill[black!80] (39,17) rectangle ++(1,1);
\fill[black!80] (40,17) rectangle ++(1,1);
\fill[black!80] (42,17) rectangle ++(1,1);
\fill[black!80] (43,17) rectangle ++(1,1);
\fill[black!80] (44,17) rectangle ++(1,1);
\end{tikzpicture} % comment if compilation is too slow
\caption{An example run of rule 18.}
\label{fig:example-run}
\end{figure}
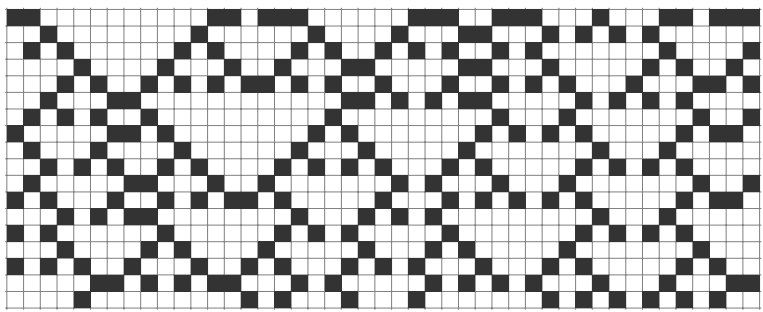

The dynamics of ECA rule 18 are a combination of several interacting processes.
On regions where the gaps between 1-symbols have odd length, the dynamics is linear and well understood.
A single gap of even length, called \emph{kink}, is persistent.
Its movements are influenced by the linear part of the dynamics, but not vice versa: the linear dynamics commutes with a local operation that erases the kink.
However, when two kinks collide, both are destroyed and the linear dynamics is perturbed.

%The image language of ECA 18, meaning the language of the subshift $f_{18}(\{0,1\}^\ZZ)$, is given by the DFA in Figure \ref{image_language}.

%\begin{figure}[htp]
%\centering
%\input{figures/image_language.tikz}
%\caption{The image language consists on the language accepted by the following automata.}
%\label{image_language}
%\end{figure}

\begin{definition}
Words of the form $1 0^{2n} 1$ with $n \in \NN$ are called \emph{kinks}.
The \emph{left border} of the kink is its leftmost $1$-symbol.

Let $x \in \{0,1\}^{\ZZ}$.
We denote $g_x(n) = \#\{-n \leq i \leq n \mid \exists k \in \NN, x_{[i, i+2k+2]} = 1 0^{2k} 1\}$.
If $\lim_{n \rightarrow \infty} g_x(n) = m < \infty$, then we call $m$ the number of kinks in $x$.

We call $\Sigma \subseteq \{0,1\}^*$ the set of words without kinks, and $\Tilde \Sigma \subseteq \{0,1\}^\ZZ$ the set of configurations without kinks.
\end{definition}

\begin{proposition}[\cite{JEN90}]
\label{prop:rule90}
Let $f_{90}$ be the function associated to ECA rule $90$: $f_{90}(abc) = a \oplus c$ for all $abc \in \{0,1\}^3$, where ${\oplus}$ is addition modulo 2, or the xor function. Then, for all kinkless words $w \in \Sigma$ we have $f_{18}(w) = f_{90}(w)$, and for all kinkless configurations $x \in \Tilde \Sigma$ we have $f_{18}(x) = f_{90}(x)$.
\end{proposition}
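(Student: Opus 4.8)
The plan is to reduce the statement to a finite comparison of the two local rules, followed by a short parity observation about kinkless words. First I would tabulate the local rule $F_{90}$ of ECA $90$: since $F_{90}(abc) = a \oplus c$, it outputs $1$ precisely on the four patterns $001$, $011$, $100$, $110$. Comparing this with Table~\ref{tab:rule18}, whose rule $F_{18}$ outputs $1$ precisely on $001$ and $100$, one checks by enumerating the eight patterns in $\{0,1\}^3$ that $F_{18}$ and $F_{90}$ agree on every pattern except $011$ and $110$, on each of which $F_{18}$ returns $0$ while $F_{90}$ returns $1$.

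The second step is to observe that both of these ``disagreement patterns'' $011$ and $110$ contain the factor $11$, and that $11 = 1 0^{2\cdot 0} 1$ is a kink (the case $n = 0$ in the definition). Hence any finite word or bi-infinite configuration containing $011$ or $110$ as a factor contains a kink. Contrapositively, every word $w \in \Sigma$ and every configuration $x \in \tilde\Sigma$ contains neither $011$ nor $110$ as a factor.

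Finally I would assemble these observations. By the definition of the action of a radius-$1$ CA on words, the $i$-th symbol of $f_{18}(w)$ is $F_{18}(w_{[i,i+2]})$ and the $i$-th symbol of $f_{90}(w)$ is $F_{90}(w_{[i,i+2]})$, where $w_{[i,i+2]}$ is a length-$3$ factor of $w$. For $w \in \Sigma$ this factor is never $011$ or $110$, so $F_{18}$ and $F_{90}$ return the same bit on it, and therefore $f_{18}(w) = f_{90}(w)$ (the case of a word too short to apply $f$ being vacuous). The configuration case is identical, with $i$ ranging over $\ZZ$ and the window $x_{[i-1,i+1]}$.

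I do not anticipate a genuine obstacle: the computation is elementary. The only two points that genuinely require care are that $011$ and $110$ are the \emph{only} patterns on which $F_{18}$ and $F_{90}$ differ (the routine eight-case enumeration above), and that $11$ is admitted as a kink, i.e.\ that $0 \in \NN$ in the convention used here — this is exactly what turns ``contains a disagreement pattern'' into ``contains a kink'', and hence what makes the proposition true in the first place. (Without the $n=0$ case the word $0110$ would be kinkless yet satisfy $f_{18}(0110) = 00 \neq 11 = f_{90}(0110)$, so the convention is not merely cosmetic.)
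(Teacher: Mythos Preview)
Your argument is correct and is the natural one: the eight-case comparison of the two local rules is accurate, and the observation that the two disagreement patterns $011$ and $110$ each contain the kink $11 = 1 0^{0} 1$ is exactly what is needed. The paper does not supply its own proof of this proposition---it is stated as a known fact with a citation to \cite{JEN90}---so there is no alternative approach to compare against; your write-up would serve perfectly well as an inline justification.
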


%The following proposition is a direct consequence from the above.

%\begin{proposition}\label{unicity}
%Let $w \in \Sigma$ be a kinkless word.
%Then for all $ab \in \{0,1\}^2$, there exists a unique $w' \in \{0,1\}^{|w|}$ such that $ab \cdot w' \in \Sigma$ and $f(ab \cdot w') = w$. 
%\end{proposition}

New kinks cannot be created by rule 18.
Furthermore, in ``generic'' infinite configurations, kinks can only be destroyed in pairs.
Consider a word of the form $u = 0 0 1 w 1 0 0$, where $w$ does not contain $00$ as a subword.
Then $f_{18}(u) = 1 0^{|w|+2} 1$, and this word is a kink if and only if the number of kinks in $u$ is odd, as illustrated in Figure \ref{fig:kink-elim}.
As a limiting case, consider a configuration $x \in \{0,1\}^\ZZ$ such that some infinite tail $x_{[i, \infty)}$ contains no occurrences of $00$.
Then $f(x)$ contains the same number of kinks as $f(y)$, where $y$ is obtained from $x$ by replacing the infinite tail by one of $(01)^\infty$ or $(10)^\infty$ in a way that does not introduce a new $00$.
These cases (and the symmetric one where we consider an infinite left tail) are the only ways to erase kinks in infinite configurations.

When $f_{18}$ is applied to finite words, their length decreases, so it is inevitable that all kinks are either destroyed as described above, or ``fall off the edge of the word'' as in $f(1100) = 01$.
The notion of stable words introduced later will allow us to control this phenomenon.

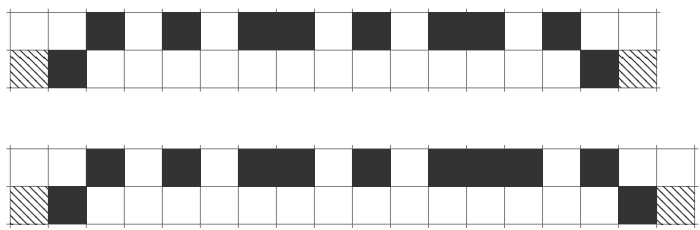
\begin{figure}[htp]
\centering
    \begin{tabular}{c}
    \begin{subfigure}[b]{.5\linewidth}
    \begin{tikzpicture}[scale = 0.5]
\draw[step=1cm, gray, very thin](-0.1,-0.1)grid(17.1,2.1);
\fill[black!80](2,1)rectangle(3,2);
\fill[black!80](4,1)rectangle(5,2);
\fill[black!80](6,1)rectangle(8,2);
\fill[black!80](9,1)rectangle(10,2);
\fill[black!80](11,1)rectangle(13,2);
\fill[black!80](14,1)rectangle(15,2);
\fill[black!80](1,0)rectangle(2,1);
\fill[black!80](15,0)rectangle(16,1);
\fill[pattern=north west lines, pattern color=black!80] (0,0) rectangle (1,1);
\fill[pattern=north west lines, pattern color=black!80] (16,0) rectangle (17,1);
\end{tikzpicture}
    \end{subfigure}
    \\
    \begin{subfigure}[b]{.5\linewidth}
    \begin{tikzpicture}[scale = 0.5]
\draw[step=1cm, gray, very thin](-0.1,-0.1)grid(18.1,2.1);
\fill[black!80](2,1)rectangle(3,2);
\fill[black!80](4,1)rectangle(5,2);
\fill[black!80](6,1)rectangle(8,2);
\fill[black!80](9,1)rectangle(10,2);
\fill[black!80](11,1)rectangle(14,2);
\fill[black!80](15,1)rectangle(16,2);
\fill[black!80](1,0)rectangle(2,1);
\fill[black!80](16,0)rectangle(17,1);
\fill[pattern=north west lines, pattern color=black!80] (0,0) rectangle (1,1);
\fill[pattern=north west lines, pattern color=black!80] (17,0) rectangle (18,1);
\end{tikzpicture}
    \end{subfigure}
    \end{tabular}
    \caption{
    The word $u_1$ above contains an even number of kinks, hence $f(u_1)$ is not a kink. The word $u_2$ below contains an odd number of kinks, hence $f(u_2)$ is a kink.}
    \label{fig:kink-elim}
\end{figure}

%\begin{proposition}[\cite{citation?}]
%\label{prop:no-new-kinks-unstable}
%Let $w \in \{0,1\}^*$ be a word that contains $n \in \NN$ kinks.
%Then $f(w)$ contains $m \leq n$ kinks.
%In particular, $f(\Sigma) \subseteq (\Sigma)$ and $f(\Tilde \Sigma) \subseteq \Tilde \Sigma$.
%\end{proposition}

The following definitions are inspired by \cite{KUR03}.

\begin{definition}
\label{def:extensions}
Let $w \in \{0,1\}^*$ be a word that contains $m \in \NN$ kinks.
The \emph{finite extension} $\Sigma(w) \subseteq \{0,1\}^*$ of $w$ is the set of those words that contains $w$ as a subword and contain exactly $m$ kinks.
The \emph{extension} of $w$ at position $p \in \ZZ$ is $\Tilde \Sigma(w, p) = \{x \in [w]_p \mid \text{$x$ contains $m$ kinks} \}$.
\end{definition}

\begin{definition}
The (regular) language of \emph{left unstable words} is $(\epsilon + 0) (10)^* 11 (0+1)^*$, and the language of \emph{right unstable words} is $(0+1)^* 11 (01)^* (\epsilon + 0)$.
A word is \emph{unstable} if it is left or right unstable.
If a word is not unstable, it is \emph{stable}.
We denote by $S$ the set of stable words.
%Let $x \in \{0,1\}^\ZZ$ be a configuration.
%We say $x$ is left unstable if there exists a position $p \in \ZZ$ such that for all positions $p' < p $, $x_{[p', p]}$ is left unstable.
%Right unstable configurations are defined symmetrically.
%We call a configuration unstable if it is left or right unstable, and stable otherwise. We call $\Tilde S$ the set of stable configurations.
\end{definition}

Extensions of stable words are also stable; we omit the easy proof.

\begin{proposition}
Let $w \in S$ be a stable word.
Then $\Sigma(w) \subseteq S$. %and $\Tilde \Sigma(w,p) \subseteq \Tilde S$ for all positions $p \in \ZZ$.
\end{proposition}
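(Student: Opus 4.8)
The plan is to prove the contrapositive: assuming that $u \in \Sigma(w)$ is unstable, I will show that $w$ is unstable too. Reversing a word preserves the subword relation and the number of kinks (each kink $1 0^{2k} 1$ being a palindrome) and interchanges the classes of left- and right-unstable words; hence it is enough to treat the case that $u$ is right unstable, the left-unstable case following by applying the result to $w^{R}$ and $u^{R}$. I will also use a reformulation of stability that is immediate from the regular-expression definitions: a word is stable if and only if every occurrence of $11$ in it has an occurrence of $00$ somewhere to its left and an occurrence of $00$ somewhere to its right. In particular, since $w$ is stable, every occurrence of $11$ in $w$ has an occurrence of $00$ to its right within $w$.

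So assume $u$ is right unstable, and fix an occurrence $u = \alpha w \beta$ of $w$ inside $u$; by definition of $\Sigma(w)$, the words $u$ and $w$ contain the same number $m$ of kinks. Being right unstable, $u$ contains $11$, and from the shape $(0+1)^{*} 1 1 (01)^{*} (\epsilon + 0)$ of right-unstable words, everything strictly to the right of the rightmost occurrence of $11$ in $u$ contains no $00$. Let $i$ be the starting position of that rightmost occurrence, so that $u_{[i+2,\,|u|-1]}$ contains no occurrence of $00$. The argument now splits according to whether the factor $u_{i}u_{i+1}=11$ lies entirely within the copy of $w$, which occupies positions $|\alpha|,\dots,|\alpha|+|w|-1$ of $u$.

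If it does, then it is an occurrence of $11$ inside $w$, so by stability of $w$ there is an occurrence of $00$ inside $w$ strictly to its right; being disjoint from $u_{i}u_{i+1}$, this occurrence of $00$ sits at some position $\ge i+2$ of $u$, contradicting the fact that $u_{[i+2,\,|u|-1]}$ contains no $00$. If the $11$ does not lie entirely within the copy of $w$ — that is, $i<|\alpha|$ or $i+1>|\alpha|+|w|-1$ — then I claim $u$ has strictly more than $m$ kinks. Indeed, each of the $m$ kinks of $w$, say the one starting at position $p$, yields a kink of $u$ starting at position $|\alpha|+p$; since at most one kink can start at any given position, the assignment $p\mapsto|\alpha|+p$ is an injection from the kink-start positions of $w$ into those of $u$, with image contained in $\{|\alpha|,\dots,|\alpha|+|w|-2\}$. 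But $u_{i}u_{i+1}=11$ is itself a kink of $u$ (the case $k=0$), and by assumption $i$ lies outside that block; hence $u$ has at least $m+1$ kinks, contradicting $u\in\Sigma(w)$. In either case we reach a contradiction, so $u$ must be stable.

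The step I expect to require the most care is the kink count in the second case: one must check that the stray $11$ at position $i$ is not already counted among the kinks imported from $w$, and in particular that an occurrence of $11$ straddling an endpoint of the copy of $w$ cannot be absorbed into a longer kink of $w$. This is handled by the facts that at most one kink starts at any position and that a kink occurrence is determined by its starting position, so that $p\mapsto|\alpha|+p$ is genuinely injective and its image omits $i$. The reformulation of stability in terms of flanking occurrences of $00$, and the reversal symmetry used in the reduction, are both routine consequences of the regular-expression definitions.
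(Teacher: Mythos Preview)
Your argument is correct. The paper itself omits the proof of this proposition (it states ``we omit the easy proof''), so there is no original argument to compare against; your route via the reformulation ``stable $\iff$ every occurrence of $11$ is flanked by an occurrence of $00$ on each side'' together with the kink-counting in the straddling case is a clean way to fill the gap.

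One small expository point: you announce a proof of the contrapositive (``assuming $u$ unstable, show $w$ unstable''), but then immediately write ``since $w$ is stable'' and proceed to derive contradictions in both cases. What you are actually executing is a proof by contradiction (assume $w$ stable and $u\in\Sigma(w)$ unstable, reach a contradiction), which is of course equivalent. If you want to keep the contrapositive framing, Case~1 should conclude ``hence $w$ has an occurrence of $11$ with no $00$ to its right in $w$, so $w$ is right unstable,'' and Case~2 should be phrased as ``this case cannot occur, since it forces $u$ to have more kinks than $w$, contradicting $u\in\Sigma(w)$.'' Either framing is fine; just make the two consistent.
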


In stable words, kinks are necessarily erased in pairs.
This is depicted in Figure \ref{fig:kink-elim}.
From this and the discussion preceding Definition \ref{def:extensions} we infer the following.

\begin{proposition}
\label{prop:no-new-kinks-stable}
Let $w \in \{0,1\}^*$ be a word that contains $n \in \NN$ kinks.
Then $f(w)$ contains $m \leq n$ kinks.
If $w \in S$, then $m \equiv n \bmod 2$.
\end{proposition}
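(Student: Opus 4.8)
The plan is to track kinks through a single application of $f = f_{18}$ by localizing the analysis to the maximal ``kinkless stretches'' of $w$ separated by the occurrences of $00$. First I would recall the structure already described before Definition \ref{def:extensions}: if a word has the form $u = 0\,0\,1\,z\,1\,0\,0$ with $z$ containing no occurrence of $00$, then $f(u) = 1 0^{|z|+2} 1$, which is a kink exactly when the number of kinks inside $u$ is odd, and in particular at most one kink survives in this block. More generally, write $w$ by marking every occurrence of $00$ in it; these occurrences cut $w$ into blocks, and the image $f(w)$ restricted to (the image of) each such block is governed either by the rule-$90$ linear dynamics on the kinkless interior (Proposition \ref{prop:rule90}), which preserves the parity and number of kinks locally by xor-linearity, or by the ``collision'' mechanism above at a $00$ marker, which can only decrease the kink count and flips a surviving kink on/off according to the local parity.

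The key steps, in order, are: (1) Formalize the block decomposition: since $f$ has radius $1$, the symbol $f(w)_i$ depends only on $w_{[i,i+2]}$, so it suffices to understand, for each position, whether it lies in a kinkless run or adjacent to a $00$ pattern, and to argue that a kink $1 0^{2k} 1$ in $f(w)$ must arise from a sub-block of $w$ of the shape discussed above. (2) Show the inequality $m \le n$: every kink of $f(w)$ is ``charged'' to at least one kink of $w$ lying in the corresponding block, and distinct kinks of $f(w)$ are charged to disjoint sets of kinks of $w$ (because a kink in $f(w)$ of length $\ge 4$ forces a $00$ in $w$ immediately inside each of its borders, separating the blocks); hence the number of kinks cannot increase. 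This uses only that no new $00$ can be created ``out of nothing'' by the rule — which follows by inspecting the rule table, since $f(w)_i = f(w)_{i+1} = 0$ forces a constraint on $w_{[i,i+3]}$ that already contains a $00$ or a kink-boundary configuration in $w$.

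(3) For the parity statement, assume $w \in S$ is stable. The point is that instability is precisely the obstruction to ``a kink falling off the edge'' or ``a half-kink at the boundary'' — the languages $(\epsilon+0)(10)^*11(0+1)^*$ and its mirror describe exactly the patterns where a $00$-block adjacent to the left or right end of $w$ has an odd internal kink parity that would change when $f$ truncates the word. For stable $w$, every $00$-collision block is flanked on both sides by material of $w$, so the boundary effect of $f$ (losing one symbol from each end) never removes or adds a $1$-symbol that is part of a kink boundary. Therefore each collision block contributes $\lfloor (\text{local kinks})/2\rfloor$ destroyed kinks in a way that exactly respects parity: the total change $n - m$ is a sum of even numbers coming from fully-interior collisions. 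Combined with the fact, already invoked in the paragraph before Definition \ref{def:extensions} together with Proposition \ref{prop:rule90}, that the linear (rule-$90$) portion preserves kink count, we conclude $m \equiv n \pmod 2$.

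I expect the main obstacle to be step (2)/(3): making the ``charging'' argument and the boundary bookkeeping fully rigorous requires a careful case analysis of how the occurrences of $00$ in $w$ map to occurrences of $00$ in $f(w)$, and how the truncation at the two ends of the finite word interacts with a $00$-block sitting near an end. In particular one must check that no kink of $f(w)$ spans two distinct $00$-blocks of $w$ (so the charging is well-defined) and that for stable $w$ the truncation never ``cuts a kink in half'' in a parity-changing way — this is where the explicit unstable-word languages do the real work, and verifying that their complement $S$ has exactly the needed closure property is the crux. The argument is a finite check in spirit but needs to be organized so that the linear part and the collision part are cleanly separated, which is why invoking Proposition \ref{prop:rule90} on the kinkless runs is essential.
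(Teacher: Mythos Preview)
The paper does not give a formal proof of this proposition: it merely states that the result is ``inferred'' from the discussion preceding Definition~\ref{def:extensions} (the $0\,0\,1\,z\,1\,0\,0$ collision mechanism) together with Figure~\ref{fig:kink-elim} and the remark that kinks can otherwise only ``fall off the edge'' of a finite word. Your proposal is a formalisation of exactly that argument, organised around the same block decomposition and the same dichotomy (a kink in $f(w)$ comes either from a longer kink $1\,0^{2k+2}\,1$ or from a collision block $00\,v\,00$), so the approaches coincide.

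One concrete slip to fix: the assertion that ``no new $00$ can be created out of nothing'' is false. For instance $f(01010)=000$, and more specifically $f(w)_i=f(w)_{i+1}=0$ does \emph{not} force $w_{[i,i+3]}$ to contain $00$ or $11$, as $w_{[i,i+3]}=0101$ shows. Fortunately your charging argument does not need this. What it needs, and what your parenthetical remark already states correctly, is that the \emph{border} pattern $10$ (resp.\ $01$) of a kink in $f(w)$ forces a $00$ in $w$ adjacent to that border; this follows directly from $F^{-1}(1)=\{001,100\}$ and is enough to separate the blocks and make the charges disjoint. The case $k=0$ (a kink $11$ in $f(w)$) is handled by observing that its unique local preimage is $1001$. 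For step (3), the operative fact is that stability of $w$ means the leftmost (resp.\ rightmost) occurrence of $11$, if any, is preceded (resp.\ followed) by a $00$ inside $w$; this is exactly what the definition of the unstable languages encodes, and it prevents a single kink from being lost at the boundary. With these clarifications your outline is sound and matches the paper's intended reasoning.
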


The most important property of stable words is that they behave nicely with respect to extensions that do not introduce new kinks.

\begin{proposition}
\label{prop:stable-extensions-f}
Let $w \in S$ be a stable word.
Then $f(\Sigma(w)) \subseteq \Sigma(f(w))$ and $f(\Tilde \Sigma(w, p)) \subseteq \Tilde \Sigma(f(w), p+1)$ for all $p \in \ZZ$.
If $w$ is not of the form $0^{\alpha} (10)^{n} 1^{\beta}$ with $\alpha, \beta \in \{0,1\}$ and $n \in \NN$, then both inclusions are equalities. 
\end{proposition}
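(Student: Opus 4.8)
The plan is to prove the two inclusions first and then analyze the gap between inclusion and equality. For the inclusions, let $w \in S$ be stable with $m$ kinks, and let $u \in \Sigma(w)$; then $u$ contains $w$ as a subword and has exactly $m$ kinks. I would write $u = a w b$ for words $a,b$. Since $f$ acts with radius $1$, $f(u)$ contains $f(w)$ as a subword in the correct position, so it remains only to check that $f(u)$ still has exactly $m$ kinks. By Proposition \ref{prop:no-new-kinks-stable}, $f(u)$ has at most $m$ kinks (no new kinks are ever created). For the lower bound I would argue that each kink of $u$ that lies within $w$ is ``protected'': because $w$ is stable, a kink $1 0^{2k} 1$ inside $w$ cannot be erased by the discussion preceding Definition \ref{def:extensions} — erasure requires a surrounding pattern $001 \cdots 100$ with an odd number of kinks between, and stability of $w$ prevents the relevant $11$-patterns from appearing at the boundary of the kink-block within $w$. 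The configuration case $f(\Tilde\Sigma(w,p)) \subseteq \Tilde\Sigma(f(w),p+1)$ is identical, using the limiting kink-erasure description for infinite tails: since $w$ is a finite window, only the behavior at the two finite boundaries of $w$ inside $x$ matters, and stability controls it. (The index shifts by $1$ because $f$ has radius $1$ and we align $f(w)$ to start one cell to the right of where $w$ started, consistent with the convention $f(w) = F(w_{[0,2r]} \cdots)$.)

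For the equality statement, suppose $w \in S$ is not of the excluded form $0^\alpha (10)^n 1^\beta$. I need to show every $u' \in \Sigma(f(w))$ is $f(u)$ for some $u \in \Sigma(w)$, and similarly for the infinite version. The idea is to construct a preimage: given $u' = a' \, f(w) \, b'$ with exactly $m$ kinks, I want to find $a, b$ with $a w b \in \Sigma(w)$ and $f(awb) = u'$. On the kinkless parts this is the preimage problem for $f_{90}$ (Proposition \ref{prop:rule90}), which is surjective on kinkless words/configurations (rule 90 is a surjective linear CA), so extending $a'$ and $b'$ leftward/rightward into kinkless tails and pulling back through $f_{90}$ handles the ``bulk''. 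The delicate point is matching the preimage correctly across the boundary of $w$: we need the chosen $a,b$ to extend $w$ without creating a new kink, and to produce exactly the symbols of $a', b'$ adjacent to $f(w)$ after one step. Here the hypothesis that $w$ is not of the form $0^\alpha(10)^n 1^\beta$ is what guarantees enough ``room'': if $w$ has this degenerate shape then $w$ is essentially all linear-phase with at most a trivial boundary, and the single application of $f$ can shrink it in a way that the image $f(w)$ admits extensions not reachable from extensions of $w$ (the edges ``fall off'' as in $f(1100)=01$), so equality genuinely fails; otherwise $w$ contains enough internal structure (a $00$ or a genuine kink, or a $11$ flanked suitably) that the preimage can be steered.

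I expect the main obstacle to be precisely this boundary-matching in the equality direction: one must show that for non-degenerate stable $w$, every legal extension of $f(w)$ lifts to a legal (kink-count-preserving) extension of $w$. I would handle it by a finite case analysis on the two ends of $w$ — classifying the possible suffixes/prefixes of $w$ modulo the stability constraints — and in each case exhibiting the lift explicitly, using surjectivity of $f_{90}$ to fill in far from the boundary and a short direct computation near it. The excluded family $0^\alpha(10)^n 1^\beta$ will emerge naturally as exactly the cases where no such lift exists at one of the ends. The inclusion direction, by contrast, should be routine given Propositions \ref{prop:rule90} and \ref{prop:no-new-kinks-stable} together with the kink-erasure discussion, since stability was defined precisely to forbid the boundary patterns that would allow a kink inside $w$ to be destroyed.
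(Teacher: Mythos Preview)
Your equality direction is essentially the paper's approach: reduce to a one-symbol extension at one end, then do a short case analysis on the first symbols of $w$ to exhibit the lift explicitly, with the excluded family $0^\alpha(10)^n 1^\beta$ emerging as exactly the case where the lift fails. That part is fine.

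The inclusion direction, however, has a real gap. You set out to show that $f(u)$ has exactly $m$ kinks, where $m$ is the number of kinks of $w$. That is the wrong target: membership in $\Sigma(f(w))$ requires $f(u)$ to have the same number of kinks as $f(w)$, and $f(w)$ can have strictly fewer than $m$ kinks even for stable $w$ (e.g.\ $w = 001101100$ is stable with two kinks, but $f(w)=1000001$ has none). Consequently your ``lower bound'' strategy---arguing that each kink of $u$ lying in $w$ is protected and cannot be erased---is aiming at a false statement: kinks inside $w$ \emph{can} be erased, in pairs, exactly as Proposition~\ref{prop:no-new-kinks-stable} allows.

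What actually needs to be shown is a positional statement: every kink of $f(awb)$ lies inside the subword $f(w)$. The paper does this by taking any subword $u$ of $awb$ whose image is a kink $10^{2k}1$ and checking the only two possible shapes of such a $u$: either $u=10^{2k+2}1$ is itself a kink (hence inside $w$, since $awb\in\Sigma(w)$), or $u=00v00$ with $v$ free of $00$ but containing a $11$. In the second case stability of $w$ is what forces the $11$---and with it the whole of $u$---to lie inside $w$, so the resulting kink lies inside $f(w)$. Your proposal gestures at the right ingredients (stability controls boundary $11$-patterns) but attaches them to the wrong conclusion; once you reframe the goal as ``no kinks of $f(awb)$ outside $f(w)$'' rather than ``kinks of $w$ survive,'' your boundary analysis becomes the correct one.
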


In the case that $w$ is of the form $0^{\alpha} (10)^{n} 1^{\beta}$, the result holds ``up to parity'': for each $u = a f(w) b \in \Sigma(f(w))$ with $u_i = 0$ whenever $i \not\equiv |a| + \alpha \bmod 2$, we have $u \in f(\Sigma(w))$.

We also explain here the error in \cite{KUR03}.
Page 15 of the article contains the claim that rule 18 is $\Sigma_p$-permuting, which in our notation means $f(\Sigma(w)) \subseteq \Sigma(f(w))$ for every word $w \in \{0,1\}^*$ of length at least 3.
This may be false if $w$ is not stable.
For example, take $w = 0011$, for which $f(w) = 10$, and consider $101 \in \Sigma(f(w))$.

\begin{proof}
Let $a w b \in \Sigma(w)$.
We claim that $f(a w b) \in \Sigma(f(w))$, that is, all kinks of $f(a w b)$ are contained in $f(w)$.
Consider any subword $u$ of $f(a w b)$ such that $f(u) = 1 0^{2k} 1$ is a kink.
There are two possibilities:
\begin{enumerate}
    \item
    $u = 1 0^{2k+2} 1$ is itself a kink, and wider than $f(u)$.
    If $f(u)$ is not contained in $f(w)$, then $u$ is not contained in $w$, which contradicts $a w b \in \Sigma(w)$.
    \item
    $u = 0 0 v 0 0$ for some word $v$ that begins and ends with $1$, does not contain an occurrence of $0 0$, and contains at least one occurrence of $1 1$ (which must be within $w$).
    If $f(u)$ is not contained in $f(w)$, then $u$ is not contained in $w$, which contradicts the stability of $w$.
\end{enumerate}

Suppose then that $w$ is not of the form $0^{\alpha} (10)^{n} 1^{\beta}$, and let $a f(w) b \in \Sigma(f(w))$.
We claim that some $c w d \in \Sigma(w)$ satisfies $f(c w d) = a f(w) b$.
It suffices to consider $|a| = 1$ and $|b| = 0$; the general result follows by iteration and symmetry.
There are a few cases:
\begin{enumerate}
    \item If $a = 0$ and $w$ does not begin with $0 1$, we can choose $c = 0$.
    \item If $a = 0$ and $w$ begins with $0 1$, we can choose $c = 1$.
    \item If $a = 1$ and $w$ begins with $0 0$, then the assumption $a f(w) b \in \Sigma(f(w))$ implies that either $w = 0^{|w|}$ or $w$ begins with $0^{2n+1} 1$ for some $n \geq 1$.
    We can then choose $c = 1$.
    \item If $a = 1$ and $w$ begins with $0 1$, we can choose $c = 0$.
    \item Suppose $a = 1$ and $w$ begins with $1$.
    By stability and the assumption that $w$ is not of the form $0^{\alpha} (10)^{n} 1^{\beta}$, it must begin with $1 (0 1)^n 0 0$ for some $n \in \NN$.
    But then $a f(w)$ begins with $1 0^{2n} 1$, contradicting $a f(w) b \in \Sigma(f(w))$.
    Hence this case is impossible.
\end{enumerate}

The claims about $\tilde \Sigma$ follow easily.
\end{proof}

From this and Proposition \ref{prop:rule90} we get the following.

\begin{corollary}
    \label{cor:unique-extensions}
    Let $w \in S$ be a stable word of length at least 2 and not of the form $0^\alpha (10)^n 1^\beta$ for $\alpha, \beta \in \{0,1\}$.
    For each $u = a f(w) b \in \Sigma(f(w))$ there exist unique $a' \in \{0,1\}^{|a|}$ and $b' \in \{0,1\}^{|b|}$ with $f(a' w b') = u$.
\end{corollary}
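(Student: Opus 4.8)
The statement to prove is Corollary \ref{cor:unique-extensions}: for a stable word $w$ of length at least $2$ not of the excluded form, and any $u = a f(w) b \in \Sigma(f(w))$, there are \emph{unique} $a', b'$ of the correct lengths with $f(a' w b') = u$. Existence is already handed to us: Proposition \ref{prop:stable-extensions-f} gives $f(\Sigma(w)) = \Sigma(f(w))$ under exactly these hypotheses, so some $a' w b' \in \Sigma(w)$ with $f(a' w b') = u$ exists. Thus the entire content of the corollary is the \emph{uniqueness} of the one-sided extensions $a'$ and $b'$, and by the left--right symmetry of rule $18$ it suffices to treat $a'$ (the argument for $b'$ is mirror-symmetric).

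The plan is to reduce to the case $|a'| = 1$ by induction, just as in the proof of Proposition \ref{prop:stable-extensions-f}: if $a' = a'_1 \cdots a'_k$, then $f(a' w b')$ determines $a'_k$ (the symbol adjacent to $w$) first, using that $w$ has length $\geq 2$ so $a'_k w$ has length $\geq 3$ and rule $18$ applies; having pinned down $a'_k$, the word $a'_{k} w b'$ is again stable (extensions of stable words are stable, Proposition before Prop. \ref{prop:no-new-kinks-stable}) and not of the excluded form (prepending a symbol to a non-$0^\alpha(10)^n1^\beta$ stable word keeps it outside that family, or one checks the few edge cases directly), so we may peel off $a'_{k-1}$ next, and so on; the same induction on the right handles $b'$. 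So the crux is: \textbf{given stable $w$ of length $\geq 2$ outside the excluded family, and given a single prescribed symbol $c \in \{0,1\}$, there is at most one $c' \in \{0,1\}$ with $(f(c' w d'))_0 = c$} — equivalently, the two candidate one-symbol left-extensions $0w$ and $1w$ of $w$ produce \emph{different} leftmost output symbols. Since $f(c' w \cdots)_0 = F(c' w_0 w_1) = $ bit $4c' + 2w_0 + w_1$ of $18$, we just need $F(0\,w_0 w_1) \neq F(1\,w_0 w_1)$; from Table \ref{tab:rule18} this fails only when $w_0 w_1 \in \{00, 11\}$ is such that both $F(0 w_0 w_1)$ and $F(1 w_0 w_1)$ are $0$ — concretely $w_0 w_1 = 11$ gives $F(011)=F(111)=0$, and $w_0 w_1 = 10$ gives $F(010)=F(110)=0$. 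These are precisely the prefixes $w$ begins with $11$ or $10$, and we must rule them out using stability and the excluded-form hypothesis.

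This is where the real work sits, and it mirrors Case 5 of the proof of Proposition \ref{prop:stable-extensions-f}. If $w$ begins with $11$ it is left unstable, contradicting $w \in S$. If $w$ begins with $10$, stability forces $w$ to begin with $1(01)^n 00$ for some $n$ (a prefix $1(01)^n 11$ would be left unstable; a prefix $1(01)^n\#$ with $\#$ of length $\leq 1$, i.e. $w = 1(01)^n$ or $w = 1(01)^n 0$, is exactly the excluded form $0^0(10)^{?}1^{?}$ after reindexing — one bookkeeping check needed here on how $1(01)^n$, $1(01)^n 0$ sit inside $0^\alpha(10)^m 1^\beta$). But then, just as in Case 5, for the sole surviving ambiguous symbol $c$ one shows $1 \cdot f(w)$ already begins $1 0^{2n} 1$, which is not contained in $f(w)$, so $1 f(w) b \notin \Sigma(f(w))$ — hence the value $c$ corresponding to $c' = 1$ was never a legitimate target, and only $c' = 0$ remains. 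Thus in every non-excluded case the leftmost output symbol determines $a'_k$ uniquely, and the induction closes.

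\textbf{Main obstacle.} The one genuine subtlety is the careful case analysis on short prefixes of $w$: checking that after removing $w = 1(01)^n$ and $w = 1(01)^n 0$ (which fall under the excluded family $0^\alpha(10)^m1^\beta$) and $w$ beginning with $11$ (unstable), the only remaining prefix in $\{10\}$-land is $1(01)^n00$, and then verifying the "$1 f(w)$ begins with a kink" claim — essentially a reuse of the computation in Case 5 of Proposition \ref{prop:stable-extensions-f}. Everything else (the peeling induction, preservation of stability under one-symbol extension, the symmetry reduction to the left side) is routine. Since the excluded-form and stability hypotheses are exactly the ones under which Proposition \ref{prop:stable-extensions-f} already gives equality of the two set inclusions, the uniqueness statement is in a sense just the "injectivity" half of that bijection, made explicit one symbol at a time.
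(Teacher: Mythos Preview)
There is a real gap in the uniqueness argument when $w$ begins with $10$. You correctly note that $F(010) = F(110) = 0$, so the leftmost output bit does not distinguish the two candidate left-extensions. Your fix is to invoke Case~5 of Proposition~\ref{prop:stable-extensions-f} to rule out $u = 1 \cdot f(w) \cdot b \in \Sigma(f(w))$, i.e.\ to exclude the \emph{target} value $c = 1$. But that is not the problem: the ambiguous target is $c = 0$, and for it \emph{both} $c' = 0$ and $c' = 1$ give $F(c'\,1\,0) = 0$. Since no later output symbol of $f(c' w b')$ depends on $c'$, one has $f(0 w b') = f(1 w b')$ identically, and your conclusion ``only $c' = 0$ remains'' is a non sequitur. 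Concretely, take $w = 100$ (stable, length $\geq 2$, not of the excluded form): then $f(0100) = f(1100) = 01 \in \Sigma(1) = \Sigma(f(w))$, so the corollary read literally is false.

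The intended reading --- confirmed by how the result is used in Lemma~\ref{lm2k2} --- is uniqueness among $a', b'$ with $a' w b' \in \Sigma(w)$. Under that constraint your argument is easily repaired: when $w$ begins with $1$, the choice $c' = 1$ makes $c'w$ begin with $11$, a new kink, so $1w \notin \Sigma(w)$ and only $c' = 0$ survives. The paper's own derivation points instead to Proposition~\ref{prop:rule90}: for $a' w b' \in \Sigma(w)$ the word $a'$ together with the initial kinkless segment of $w$ is itself kinkless, so $f_{18}$ coincides there with $f_{90}$, whose bipermutivity recovers $a'$ uniquely from its image and $w$.
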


\begin{corollary}\label{prsurj}
$f(\Sigma) = \Sigma$ and $f(\Tilde \Sigma) = \Tilde \Sigma$.
\end{corollary}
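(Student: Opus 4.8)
The plan is to reduce the corollary to the behaviour of rule $90$ via Proposition \ref{prop:rule90}, using one structural observation: \emph{a word or configuration is kinkless precisely when it has no $1$-symbol, or all of its $1$-symbols occur at coordinates of one fixed parity.} Indeed, in a kinkless word two $1$-symbols with no $1$ strictly between them are separated by an odd positive number of $0$'s, hence lie at coordinates of the same parity; chaining this along all the $1$-symbols gives one implication, and conversely a subword $1 0^{2k} 1$ has its two $1$-symbols at coordinates of opposite parity, so a word whose $1$'s share a parity contains no kink. The argument is identical for configurations.

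Given this, the inclusions $f(\Sigma)\subseteq\Sigma$ and $f(\Tilde\Sigma)\subseteq\Tilde\Sigma$ are immediate from Proposition \ref{prop:no-new-kinks-stable} together with the discussion of infinite configurations preceding Definition \ref{def:extensions}; alternatively, by Proposition \ref{prop:rule90} the restriction of $f$ to $\Sigma$ (resp. $\Tilde\Sigma$) agrees with $f_{90}$, and if all $1$-symbols of $w$ lie on a single parity class of $\ZZ$ then so do those of $f_{90}(w)$, whence $f_{90}(w)$ is again kinkless.

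For the reverse inclusions we exhibit preimages. Let $v\in\Sigma$; if $v$ has no $1$-symbol then $0^{|v|+2}\in\Sigma$ maps to it, so assume the $1$-coordinates of $v$ all have parity $\pi\in\{0,1\}$. Define a word $w$ of length $|v|+2$ by setting $w_i=0$ whenever $i\not\equiv\pi\pmod 2$ and propagating the recurrence $w_{i+2}=w_i\oplus v_i$ along the coordinates with $i\equiv\pi\pmod 2$, starting from $w_\pi=0$. Then $w_i\oplus w_{i+2}=v_i$ holds for every $i$ in the range of $f_{90}(w)$: for $i\equiv\pi\pmod 2$ this is the recurrence, and for $i\not\equiv\pi\pmod 2$ both $w_i$ and $w_{i+2}$ vanish, as does $v_i$. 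Hence $f_{90}(w)=v$; since $w$ carries $1$-symbols only at coordinates $i\equiv\pi\pmod 2$ it is kinkless, so $w\in\Sigma$ and $f(w)=f_{90}(w)=v$ by Proposition \ref{prop:rule90}. The configuration case runs the same computation bi-infinitely: given $y\in\Tilde\Sigma$ with $1$-coordinates of parity $\pi$ (the all-$0$ configuration being trivial), let $\sigma=1-\pi$, set $x_i=0$ whenever $i\equiv\pi\pmod 2$, and on the remaining sublattice solve the equations $x_{i-1}\oplus x_{i+1}=y_i$, one for each $i$ with $i\equiv\pi\pmod 2$, by fixing one value of $x$ on that sublattice and propagating in both directions; equivalently, restricted to that sublattice $f_{90}$ acts as the surjective map $z\mapsto(z_k\oplus z_{k+1})_k$. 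A direct check gives $f_{90}(x)=y$, and $x\in\Tilde\Sigma$ since its $1$-symbols all have parity $\sigma$, so $f(x)=f_{90}(x)=y$.

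The only step that does real work is the parity characterization of kinkless words and configurations; once it is in place, constructing preimages is merely solving a linear recurrence, with no case analysis beyond isolating the trivial all-$0$ configuration (or word). One should also read $\Sigma$ in $f(\Sigma)$ as consisting of the words of length at least $2$, on which $f$ is defined; the few shortest words are handled by the same formulas or by inspection.
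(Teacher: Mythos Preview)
Your proof is correct. The paper states the corollary without proof, deriving it from Proposition~\ref{prop:stable-extensions-f} (together with the parity remark that follows it) and Proposition~\ref{prop:rule90}: every kinkless word is stable, so the forward inclusion is automatic, and the equality case of Proposition~\ref{prop:stable-extensions-f} (plus the ``up to parity'' remark for words of the form $0^\alpha(10)^n1^\beta$) supplies preimages inside~$\Sigma$.

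Your route is the same idea unpacked and made self-contained. Rather than invoking the general $\Sigma(w)$ machinery, you isolate the structural fact that does all the work---kinkless is equivalent to ``all $1$-symbols share a parity''---and then construct preimages by solving the rule-$90$ linear recurrence on one parity sublattice. This is exactly what the parity remark after Proposition~\ref{prop:stable-extensions-f} encodes, so the two arguments are equivalent; yours is more elementary (it never needs stability or the case analysis in the proof of Proposition~\ref{prop:stable-extensions-f}), while the paper's phrasing has the advantage of reusing machinery already built for words with kinks. Your closing caveat about reading $f(\Sigma)$ over words of length at least~$2$ is appropriate and matches the paper's convention for applying~$f$ to finite words.
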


%\begin{definition}[\cite{JEN90}] Let $w \in \{0,1\}^*$ be a word, we define $G : \{0,1\}^* \rightarrow %\{0,1\}^*$ by $G(w)$ inserts a $0$ in each kink of $w$. For example see Figure \ref{fig:Gfunc}.

%\begin{center}
%\begin{figure}[htp]
%    \begin{tabular}{cc}
%    \begin{subfigure}[b]{.5\linewidth}
%    \input{figures/Gfunc_noG.tikz}
%    \end{subfigure}
%    &
%    \begin{subfigure}[b]{.5\linewidth}
%    \input{figures/Gfunc_G.tikz}
%    \end{subfigure}
%    \end{tabular}
%    \caption{The function $G$ is applied from the left word to the right word.}
%    \label{fig:Gfunc}

%\end{figure}
%\end{center}
%\end{definition}

%\begin{proposition}[\cite{JEN90}]\label{prcom}
%Let $w$ be a word such that $w$ and $f(w)$ contain the same number of kinks, then $f(G(w)) = G(f(w))$
%\end{proposition}

Finally, we make use of the following result to bring down the number of kinks in an initial configuration.

\begin{theorem}[\cite{JEN90}] \label{thJEN}
Let $x \in \{0,1\}^\ZZ$ be a configuration with a finite number of $1$-symbols.
Then there exists $n \in \NN$ such that $f^n(x)$ contains at most one kink.
\end{theorem}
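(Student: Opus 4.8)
The plan is to argue by induction on the number $m$ of kinks in $x$, reducing the theorem to the single assertion that a finitely-supported configuration with exactly two kinks eventually loses them. If $m \le 1$ there is nothing to prove. If $m \ge 2$, note first that every iterate $f^k(x)$ again has finitely many $1$'s --- the leftmost and rightmost $1$ each move outward by exactly one cell per step, since $F(001)=F(100)=1$ while $F(010)=F(011)=0$, so the support's span grows by exactly $2$ per step --- and that by Proposition~\ref{prop:no-new-kinks-stable} together with the description of kink erasure in configurations given just before Definition~\ref{def:extensions}, the number of kinks of $f^k(x)$ is non-increasing in $k$. Hence it suffices to exhibit one $k \ge 1$ with $f^k(x)$ carrying strictly fewer than $m$ kinks, after which the induction hypothesis applied to $f^k(x)$ finishes the proof. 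To produce such a $k$ I would track the two leftmost kinks of $x$: by the radius-$1$ (finite-speed) propagation of $f$, their joint evolution up to collision coincides with that in the configuration obtained by deleting everything sufficiently far to the right of the second kink and padding with $0$'s, which is a finitely-supported configuration carrying exactly those two kinks. The theorem is thereby reduced to: \emph{a finitely-supported configuration with exactly two kinks eventually has zero kinks (and does so within a number of steps controlled by the distance between the kinks).}

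For this reduced statement I would invoke the linear theory of rule $90$. By the remark following Proposition~\ref{prop:rule90} --- that the linear dynamics commutes with the local operation erasing a kink --- erasing both kinks from such a configuration $y$ produces a kinkless, still finitely-supported configuration $\hat y$ for which $f_{18}$ on $y$ corresponds to $f_{90}$ on $\hat y$; and Corollary~\ref{prsurj} ensures $f_{90}$ keeps $\hat y$ kinkless. Now $\hat y$ is a Laurent polynomial over $\FF_2$, and $f_{90}^k$ multiplies it by $t^{-k}(1+t)^{2k}$; since the proportion of odd binomial coefficients $\binom{2k}{i}$ tends to $0$, the $1$'s of $f_{90}^k(\hat y)$ become arbitrarily sparse. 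Combining this with the bounded speed ($\le 1$) of the two kink worldlines, one would conclude that after finitely many steps the region strictly between the two kinks contains no $1$, so that $f^k(y)$ has a factor $1 0^{2a} 1 0^{2b} 1$ in which the three displayed $1$'s are consecutive; a direct computation (the even case of Figure~\ref{fig:kink-elim}, applied repeatedly as the inner gaps shrink) then shows the two kinks annihilate, dropping the count to $0$.

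The main obstacle is the last implication above: passing from the soft facts ``Pascal's triangle modulo $2$ thins out'' and ``a rule-$18$ defect moves at speed at most $1$'' to a genuine proof that the gap between the two kinks is eventually emptied --- given that a priori both the trapped dust and that gap can keep growing --- and, moreover, doing so with an explicit bound on the number of steps that does not depend on the part of the configuration far from the two kinks (this independence is exactly what makes the truncation in the first paragraph legitimate). Quantifying how much rule-$90$ dust can remain between two moving rule-$18$ defects is the real technical heart of the result; here one must either reproduce the estimates of \cite{JEN90} or supply an independent argument, the inductive scaffolding and the rule-$90$ reduction being routine by comparison.
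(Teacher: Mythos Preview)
The paper does not prove Theorem~\ref{thJEN}; it merely quotes the result from \cite{JEN90} and moves on to Corollary~\ref{coJEN}. There is therefore nothing in the paper to compare your argument against.

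As for the proposal itself, you are candid that the heart of the matter --- two kinks in a finitely-supported configuration must eventually annihilate, with a controllable time bound --- is left to \cite{JEN90}, and that is indeed where all the difficulty lies. Two further points, however, deserve attention even at the level of your sketch:

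\begin{itemize}
\item The light-cone reduction is more delicate than you indicate. Truncating ``sufficiently far to the right of the second kink'' and padding with $0$'s only matches the original evolution on $(-\infty,\,p-t]$ at time $t$, where $p$ is the truncation point; but the right border of the second kink can drift right at speed $1$, so its worldline need not stay inside that cone. To make the truncation legitimate you would need the stronger statement that the two kinks of the truncated configuration annihilate within a time $T$ satisfying $2T$ less than the distance from the second kink to the truncation point --- and the truncation point must lie strictly left of the third kink of $x$. When the third kink is close to the second, no such $p$ exists. Your parenthetical ``controlled by the distance between the kinks'' is thus not only the hard analytic step but also not obviously the \emph{right} form of the bound: the time may well depend on the word between the kinks and on the dust to the left, not merely on the gap.
\item The rule-$90$ sparsity claim is false as stated. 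The number of odd entries in row $2k$ of Pascal's triangle is $2^{s_2(k)}$, so the proportion is $2^{s_2(k)}/(2k+1)$; along $k=2^j-1$ this is $(k+1)/(2k+1)\to\tfrac12$, not $0$. What is true --- and what one actually uses --- is that along the subsequence $k=2^j$ the polynomial $(1+t)^{2k}$ has only two terms, so $f_{90}^{2^j}(\hat y)$ is a sum of two far-apart shifts of $\hat y$. Any argument via thinning of the linear background must work along such special times and then bridge between them; the naive density statement does not hold.
\end{itemize}

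In short: the inductive scaffolding is fine, but both the reduction step and the linear-algebra step need sharper formulations before they can carry weight, quite apart from the acknowledged gap.
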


Note that because of Proposition \ref{prop:no-new-kinks-stable}, the configuration $f^n(x)$ contains a kink if and only if $x$ has an odd number of kinks.

\begin{corollary} \label{coJEN}
Let $w \in \{0,1\}^*$ be a word with an even number of kinks.
Then there exists $n \in \NN$ such that $f^n(\Sigma(0^n \cdot w \cdot 0^n)) \subseteq \Sigma$ and $f^n( \Tilde \Sigma(0^n \cdot w \cdot 0^n), p) \subseteq \Tilde \Sigma$ for all $p \in \ZZ$.
\end{corollary}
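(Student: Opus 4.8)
The plan is to first reduce the statement, via a bi-infinite configuration, to Theorem \ref{thJEN}, and then transfer the conclusion to the finite and infinite extensions of $0^n w 0^n$ using Proposition \ref{prop:stable-extensions-f}. If $w$ has no kinks the claim is trivial with $n=0$, so assume $w$ has a positive even number $2m$ of kinks. First I would consider the configuration $x = {}^\infty 0 \cdot w \cdot 0^\infty$, which has finitely many $1$-symbols and, since padding by $0$s creates no kink, exactly $2m$ kinks. By Theorem \ref{thJEN} and the parity observation following it, there is $N_0 \in \NN$ with $f^{N_0}(x) \in \Tilde\Sigma$, and then, since $f(\Tilde\Sigma) = \Tilde\Sigma$ (Corollary \ref{prsurj}), $f^i(x) \in \Tilde\Sigma$ for every $i \ge N_0$. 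Fix $n := 2 N_0 + 2$ and put $z := 0^n \cdot w \cdot 0^n$; this word contains exactly the $2m$ kinks of $w$, all located strictly inside the central $w$-block.

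The key step is to show that $f^i(z)$ is stable for every $i \in \{0,1,\dots,n-1\}$. Granting this, Proposition \ref{prop:stable-extensions-f} applies at each of $z, f(z), \dots, f^{n-1}(z)$ (only the inclusion half is needed, so the exceptional form $0^\alpha(10)^k 1^\beta$ is harmless), and iterating yields $f^n(\Sigma(z)) \subseteq \Sigma(f^n(z))$ and $f^n(\Tilde\Sigma(z,p)) \subseteq \Tilde\Sigma(f^n(z), p+n)$ for all $p \in \ZZ$. Since $n \ge N_0$, the word $f^n(z)$ is a factor of $f^n(x) \in \Tilde\Sigma$, hence kinkless, so $\Sigma(f^n(z)) \subseteq \Sigma$ and $\Tilde\Sigma(f^n(z), p+n) \subseteq \Tilde\Sigma$, which gives exactly the two desired inclusions after noting $\Sigma(0^n\cdot w\cdot 0^n) = \Sigma(z)$.

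It remains to prove the stability claim, and this is where I expect the real work to be. For $i \le N_0$ (note $N_0 = (n-2)/2$): because $f$ has radius $1$ and all $1$-symbols of $x$ lie in $[0,|w|-1]$, all $1$-symbols of $f^i(x)$ lie in $[-i,|w|-1+i]$, and $f^i(z)$ is, up to the standard index shift, the restriction of $f^i(x)$ to a window extending $n-2i \ge 2$ cells beyond this block on each side; hence $f^i(z)$ begins and ends with $00$, and a word that begins and ends with $00$ lies in neither $(\epsilon+0)(10)^*11(0+1)^*$ nor $(0+1)^*11(01)^*(\epsilon+0)$, so it is stable. For $N_0 \le i \le n-1$: $f^i(z)$ is a factor of the kinkless configuration $f^i(x)$, hence contains no kink, in particular no factor $11$ (which is itself a kink); but every unstable word contains $11$, so $f^i(z)$ is stable. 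The two ranges $\{0,\dots,N_0\}$ and $\{N_0,\dots,n-1\}$ together cover $\{0,\dots,n-1\}$, proving the claim. The main obstacle is precisely this: the naive ``boundary zeros'' argument only survives about $n/2$ iterations, while kinklessness — guaranteed only after $N_0$ steps — already forces stability, so one must choose $n$ of order $N_0$ to make the two arguments meet.
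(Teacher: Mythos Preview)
Your argument is correct. The paper states Corollary \ref{coJEN} without proof, simply as a consequence of Theorem \ref{thJEN}, so there is no explicit proof to compare against; your proposal supplies precisely the details the paper leaves implicit, and does so along the natural route (apply Theorem \ref{thJEN} to ${}^\infty 0 \cdot w \cdot 0^\infty$, then transfer via iterated Proposition \ref{prop:stable-extensions-f}). The one nontrivial point---that every intermediate $f^i(0^n w 0^n)$ is stable---is handled correctly by your two-regime argument: for $i \le N_0$ the word still begins and ends in $00$, and for $i \ge N_0$ it is kinkless and hence contains no $11$. Your observation that one must take $n$ of order $2N_0$ so that the two regimes overlap is exactly the point; in fact your argument works verbatim for every $n \ge 2N_0+2$, which is the slightly stronger form the paper tacitly uses later (e.g.\ in the proof of Theorem \ref{thm:no-kinks}).
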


\section{The generic limit set of rule 18}

In this section we characterize the set of words with at most two kinks that occur in the generic limit set of rule 18.
In principle, the cases of zero or one kink (Theorems \ref{thm:no-kinks} and \ref{thm:one-kink}) are subsumed by the case of two kinks (Theorem \ref{thm:two-kinks}), since all words of at most one kink occur as subwords of two-kink words in the generic limit set.
We present proofs for these cases as well in order to showcase our techniques in a simpler context.
In the case of zero kinks, the idea is simple: use Theorem \ref{thJEN} to erase all existing kinks, then use the permutivity of $f_{18}$ on kinkless configurations to produce an arbitrary kinkless word.

\subsection{Kinkless words}

We begin with the easiest case of kinkless words.

\begin{theorem}
\label{thm:no-kinks}
Every word with no kinks occurs in $\gls(f_{18})$.
\end{theorem}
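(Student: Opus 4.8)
The plan is to use the combinatorial characterization of the generic limit set (Lemma \ref{lemma:combchar}): a word $w$ occurs in $\gls(f_{18})$ if and only if there is a seed word $s$ and a position $i$ such that for \emph{all} extensions $u, v \in \{0,1\}^*$, infinitely many iterates $f^n$ map $[u s v]_{i-|u|}$ into $[w]_0$. So I would fix an arbitrary kinkless word $w \in \Sigma$ and look for an appropriate seed.

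The key idea, as hinted in the text, is to combine two facts. First, by Theorem \ref{thJEN} (and the remark following it), any configuration with finitely many $1$-symbols eventually has at most one kink; so starting from a bounded seed surrounded by a finite amount of arbitrary material $u, v$ and then $0$s, after finitely many steps there is at most one kink, and with the right parity choice we can arrange for \emph{zero} kinks. Concretely, I would pick the seed $s$ so that the whole local pattern $u s v$ (padded with zeros on both sides to form a configuration) has an even number of kinks; since $u$ and $v$ are finite and can contribute extra kinks, the honest way to handle this is to let $s$ absorb the parity — but $u,v$ are chosen adversarially, so instead I should use the structure of rule 90: on kinkless configurations $f_{18} = f_{90}$, which is surjective and in fact permutive in both outermost coordinates, so once we reach a kinkless configuration we have complete freedom to steer a designated window to any prescribed content $w$ in finitely many further steps. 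The parity issue is resolved because if $usv$ (padded by zeros) has an odd number of kinks, then after applying Theorem \ref{thJEN} one kink survives, but that one surviving kink is localized (it performs a bounded-speed walk), so it drifts away from the origin; meanwhile the region around the origin becomes governed purely by the linear rule $f_{90}$ and we can still realize $w$ there.

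More carefully, here is the order of steps I would carry out. (1) Fix $w \in \Sigma$, say of length $\ell$, and let $r$ be such that we want to place $w$ at positions $[0,\ell-1]$. (2) Choose the seed $s = 0^L$ for a large $L$ to be determined, placed symmetrically about the origin, so $i = -L/2$ or so. (3) Given adversarial $u,v$, form the configuration $x = {}^\infty 0 \cdot u s v \cdot 0^\infty$; it has finitely many $1$s, so by Theorem \ref{thJEN} there is $N$ with $f^N(x)$ having at most one kink, and by Proposition \ref{prop:no-new-kinks-stable} the parity of the kink count is preserved, so $f^N(x)$ has zero or one kink depending on the parity of the number of kinks in $usv$. (4) In either case, restrict attention to a large window $W$ around the origin: since the one possible surviving kink moves with speed bounded by $1$, after waiting a further time proportional to $L$, that kink (if present) leaves $W$, and within $W$ the configuration agrees with a kinkless configuration and evolves by $f_{90}$. (5) Use permutivity of $f_{90}$ (rule 90 is bipermutive: $f_{90}(abc) = a \oplus c$, so it is permutive in the leftmost and rightmost coordinates) to argue that from any configuration agreeing with a fixed kinkless configuration on a sufficiently large window, we can reach, after $\ell$ more steps, a configuration whose content on $[0,\ell-1]$ is exactly $w$ — here I need a small lemma that bipermutivity lets us prescribe a window of any content by choosing the surrounding cells, which is standard. (6) Since $u,v$ were arbitrary and the times $N$ (plus the waiting and the final $\ell$ steps) can be made to work for infinitely many $n$ by re-running the argument with longer seeds (or by noting that once $w$ appears at $[0,\ell-1]$ at some time, we can re-aim using freedom in the extension) — actually the cleanest way is: the characterization only requires \emph{infinitely many} $n$, and by enlarging $L$ we can always push the target time later, so for each finite bound we get a valid $n$ beyond it.

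The main obstacle I anticipate is step (5)–(6): making precise that once we are ``locally kinkless and evolving by $f_{90}$'' we can hit the exact word $w$ at the exact position $[0,\ell-1]$ for infinitely many times $n$, uniformly over the adversarial $u,v$. The subtlety is that $f_{90}$ shrinks the window of determinacy, so to guarantee $w$ on $[0,\ell-1]$ at time $n$ I need the seed's zero-region (and hence the deterministic kinkless region after Theorem \ref{thJEN} kicks in) to be wide enough — which forces $L$ to grow with $n$. I would handle this by making the seed length a parameter and, for the ``infinitely many $n$'' clause, observing that a single fixed seed $s$ suffices provided we allow the target word $w$ to appear at time $n$ for a cofinal set of $n$: indeed, after $w$ first appears in the $f_{90}$-governed region, subsequent images under $f_{90}$ keep producing \emph{some} kinkless word at $[0,\ell-1]$, and by a pigeonhole/periodicity or a direct steering argument using the extra free cells that keep entering from $u,v$'s side, we recover $w$ infinitely often; alternatively, and more robustly, I would phrase the whole thing with the seed chosen \emph{after} fixing a schedule, using that the definition quantifies $s$ before $u,v$ but allows $i$ fixed — so a single large-enough (but fixed) $s$ whose zero-block dominates any realistic interaction time works, because the adversarial $u,v$ only ever inject finitely many $1$s and Theorem \ref{thJEN}'s $N$ depends on that finite data, after which the origin neighborhood is clean forever except for at most one transient kink. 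This last point — that after the transient the origin region is \emph{permanently} kinkless and permutively controllable — is the crux, and I would isolate it as the key claim of the proof.
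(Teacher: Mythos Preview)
Your high-level plan (kill the kinks, then use bipermutivity of $f_{90}$ on the kinkless region) is the paper's plan too, but there is a genuine gap in how you handle the case where $u s v$ has an odd number of kinks. Your proposed fix, namely that the single surviving kink ``drifts away from the origin'' so that a window around $0$ eventually evolves purely by $f_{90}$, is not justified and is in general false: the kink's motion is dictated by the configuration, and with your all-zero padding the $1$s of $u$ spread to the right while the $1$s of $v$ spread to the left, so the kink may well sit and wander near the origin indefinitely. There is no speed-$1$ drift to appeal to. The paper sidesteps this entirely with a one-line observation you overlook: Lemma~\ref{lemma:combchar} only asks that \emph{some} configuration in the cylinder $[u s v]_{i-|u|}$ hit $[w]_0$, so you are free to choose what lies beyond $u s v$. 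In particular you can append a short word $r$ to the right of $v$ so that $u s v r$ has an \emph{even} number of kinks, then pad with zeros; Corollary~\ref{coJEN} then gives zero kinks after finitely many steps, and the odd-parity case never arises. The paper actually takes $s=\epsilon$, which makes this cleanest.

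A second, smaller gap is your step~(6). You cannot ``enlarge $L$'' to push the hitting time later, because the seed $s$ is fixed before $u,v$ are chosen. The paper's mechanism for obtaining infinitely many $n$ is different: with $s=\epsilon$ fixed, for each large $n$ it pads $q_1 q_2 r$ by $0^{2n+2}$ on both sides (this padding is part of the prover-chosen extension, not the seed), and the resulting hitting time is at least $n$. Finally, note there is a parity-of-position constraint you do not mention: on $\tilde\Sigma$ all $1$s occupy coordinates of a single parity, which flips at each step of $f_{90}$, so the target time must be chosen with the correct parity relative to $w$; the paper handles this explicitly.
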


\begin{proof}
Let $w \in \Sigma$ be a kinkless word. We consider the seed $\epsilon$ at position $i = 0$. Let $q_1, q_2 \in \{0,1\}^*$ be arbitrary words. Our goal is to show that $f^k([q_1 q_2]_{-|q_1|})$ intersects $[w]_0$ for infinitely many $k$.

There exists a word $r \in \{0,1\}^*$ such that $q_1 q_2  r$ contains an even number of kinks. According to Corollary \ref{coJEN}, there exists $n_0 \in \NN$ such that for all $n \geq n_0$ and all configurations $x \in \Tilde \Sigma(0^n \cdot q_1 q_2 r \cdot 0^n, -|q_1|-n)$, the image $f^n(x)$ contains no kinks. In the following, we call $u = 0^{2n+2} \cdot q_1 q_2 r \cdot 0^{2n+2}$.

For each $m \leq n$, the word $f^m(u)$ begins and ends in $0 0$, so it is stable and not of the form $0^\alpha (1 0)^k 1^\beta$ for $\alpha, \beta \in \{0,1\}$ and $k \in \NN$.
By iterating Proposition \ref{prop:stable-extensions-f}, we have
\begin{align*}
    \tilde \Sigma(f^n(u), -|q_1|-n-2) = {} & f(\tilde \Sigma(f^{n-1}(u), -|q_1|-n-3)) \\
    {} = {} & f^2(\tilde \Sigma(f^{n-2}(u), -|q_1|-n-4)) \\
    {} \vdots {} & \\
    {} = {} & f^n(\tilde \Sigma(u, -|q_1|-2n-2)).
\end{align*}

The word $f^n(u)$, and hence each configuration in $\tilde \Sigma(f^n(u), -|q_1|-n-2)$, contains no kinks.
Suppose that $|q_1|, |q_2|$ are both even and all $1$-symbols in $f^n(u)$ and $w$ occur in even positions (the other cases are handled similarly).
Then $f^{|q_2r|+n+2}(u) w \in \Sigma(f^{|q_2r|+n+2}(u))$.
Again by iterating Proposition \ref{prop:stable-extensions-f} (and the remark following it), we obtain $\tilde \Sigma(f^{|q_2r|+2n+2}(u), -|q_1 q_2 r|-2n-2) = f^{|q2r|+n+2}(\tilde \Sigma(f^n(u), -|q_1|-2n-2))$.
Hence, as long as $q_1$ and $q_2$ are long enough (as we can choose them to be), there exists a word $v$ of length $|w|$ such that $f^n(u) v$ contains no kinks and $f^{|q_2 r|+n+2}(f^n(u) v)$ ends in $w$.
In total, we have
\[
\tilde \Sigma(w, 0) \subseteq f^{|q_2 r|+n+2}(\tilde \Sigma(f^n(u) v, -|q_1|-n-2)) \subseteq f^{|q_2 r|+2n+2}(\tilde \Sigma(u, -|q_1|-2n-2)),
\]
which shows that $[w]_0$ intersects $f^k([q_1 q_2]_{-|q_1|})$ for some $k \geq n$.
Since $n$ can be arbitrarily large, Lemma \ref{lemma:combchar} gives that $w$ occurs in $\gls(f_{18})$.
\end{proof}

\subsection{Words with one kink}

In this section, we show that all words with exactly one kink occur in the generic limit set of rule 18.
The idea is to again erase all existing kinks except one, then direct it to a specific position, and use permutivity to control the kinkless region around it.

\begin{lemma}
\label{lem:one-kink-to-unstable}
Let $w$ be a one kink word.
Then there exists $n \in \NN$ such that $f^n(w)$ is a left unstable or right unstable word.
\end{lemma}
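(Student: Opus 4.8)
The plan is to track the single kink inside $w$ under iteration of $f_{18}$ and show that, after a bounded number of steps, the configuration around the kink takes one of the prescribed unstable forms $(\epsilon+0)(10)^*11(0+1)^*$ or $(0+1)^*11(01)^*(\epsilon+0)$. The word $w$ has the shape $u \cdot 1 0^{2k} 1 \cdot v$ where $u$ ends with $1$ or is empty and $v$ begins with $1$ or is empty, and both $u$ and $v$ are kinkless (since $w$ has exactly one kink). The key input is Proposition \ref{prop:rule90}: on the kinkless stretches to the left and to the right of the kink, $f_{18}$ acts as the linear rule $f_{90}$, and $f_{90}$ shrinks any finite word to the empty word in finitely many steps ($f_{90}$ is nilpotent on finite words, losing one symbol from each end per step). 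So after $n \geq \max(|u|,|v|)/2$ or so applications, the kinkless padding on each side has been entirely consumed, and $f^n(w)$ consists of essentially just the (possibly grown or shrunk) kink together with whatever boundary debris the shortening produced.

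**Next I would** analyze what a word that is "nothing but a kink plus edge effects" looks like. By Proposition \ref{prop:no-new-kinks-stable}, no new kinks are created, and since $w$ has odd kink-parity, $f^n(w)$ still contains a kink for as long as the word is long enough to contain one. A minimal-width neighbourhood of a kink that has lost its kinkless context is a word of the form $1 0^{2j} 1$ possibly preceded/followed by one extra symbol from the boundary; more carefully, I would compute the forward images of the family $0^a 1 0^{2k} 1 0^b$ with $a,b \in \{0,1\}$ directly from the local rule of $f_{18}$ and observe that within a couple of steps one reaches a word matching $11$ adjacent to a run of the form $(10)^*$ or $(01)^*$ with at most one leading or trailing $0$ — precisely the unstable patterns. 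The point is that once the padding is gone, the kink's two $1$-borders are forced close to the ends of the short word, and the linear rule on the short kinkless interior between them produces a Pascal-triangle pattern that, near an edge, collapses to the $(10)^*$/$(01)^*$ form flanked by $11$.

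**The main obstacle** I anticipate is controlling the interaction between the shrinking of the word at its two ends and the possible drift or spreading of the kink: a priori the kink could "fall off the edge" (as in $f(1100)=01$) before it is exposed as unstable, which would be bad because then $f^n(w)$ might be kinkless. I would handle this by choosing $n$ not too large — just large enough to consume the kinkless padding but small enough that the kink is still present — and by using the odd-parity observation to guarantee a kink survives. A cleaner route may be to first pad: note that "$f^n(w)$ is unstable" is the goal, and unstable is a property of the finite word itself, so I can work with the exact word $w$ rather than extensions, and simply case-split on whether $w$ itself is already unstable (then $n=0$) or stable. If $w$ is stable, then $w$ is not of the form $0^\alpha(10)^n1^\beta$ with at most one kink unless it is kinkless, so $w$ genuinely contains the kink $10^{2k}1$ with nontrivial kinkless material on at least one side; I then iterate $f_{18}=f_{90}$ on that side until a $11$ pattern emerges at the appropriate end. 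Concretely, the decisive computation is that $f_{90}$ applied to a word ending in $\cdots 1 0^{2k} 1$ eventually produces a suffix of the form $1 1 (0 1)^* (\epsilon+0)$ or, after shortening, exposes the $11$; this is a finite check on how the XOR rule propagates a lone block of $1$s, and is the one place where a short explicit calculation is unavoidable.
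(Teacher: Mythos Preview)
Your plan is workable in spirit but takes a long detour around a three-line argument, and along the way it leaves real gaps. The paper's proof is simply induction on $|w|$: a one-kink word of length $2$ or $3$ already has $11$ as a prefix or suffix and is therefore unstable; for longer $w$, either $w$ is already unstable (take $n=0$), or $w$ is stable, in which case Proposition~\ref{prop:no-new-kinks-stable} forces $f(w)$ to have exactly one kink (at most one, and of odd parity), and since $|f(w)|=|w|-2$ the induction hypothesis applies to $f(w)$.

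The specific problems with your approach: you want to treat the kinkless flanks $u,v$ as evolving under $f_{90}$ and being ``consumed'', but Proposition~\ref{prop:rule90} applies to kinkless \emph{words}, not to kinkless \emph{subwords} sitting next to a kink; at the boundary the local rule sees cells from both regions, and the kink itself can grow by absorbing flank material (for instance $f(00110100)=100001$, where the kink $11$ widened to $100001$), so the picture of a fixed-width kink with independently shrinking padding is misleading. Your ``cleaner route'' paragraph nearly lands on the right idea when it case-splits on stability, but then you revert to analysing $f_{90}$ ``on that side'' and defer the decisive step to an unspecified ``short explicit calculation''. None of this is needed: the only ingredients are that applying $f$ shortens a finite word by two and that stability together with odd kink-parity keeps exactly one kink present, so instability must be reached before the length drops below $2$.
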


\begin{proof}
We proceed by induction on $|w|$.
If $|w| = 2$ or $|w| = 3$, then $w$ must contain $11$ as a prefix or suffix, so it is already unstable.
Else, if $w$ is stable, it is easy to see that $f(w)$ is also a one kink word. By the induction hypothesis there exists some $n'$ such that $f^{n'}(f(w))$ is left or right unstable.
Hence the property is true for $n = n'+1$.
\end{proof}

\begin{definition}
A \emph{left kink word} is a finite word that begins with a kink and contains no other kinks.
The set of left kink words is denoted $L \subset \{0,1\}^*$.
\end{definition}

The following lemma tells us that $11$ can be produced from any word in $L$.
As any one-kink word can be produced in one step from $11$, it follows from this lemma that any one kink word can be produced from any word of $L$.

\begin{lemma} \label{prreduc}
For all left kink words $w \in L$, there exists $n \in \NN$ such that $\Sigma(11) \subseteq f^n(\Sigma(w))$.
\end{lemma}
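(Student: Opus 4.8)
The plan is to reduce an arbitrary left kink word $w \in L$ to the word $11$ in a controlled number of steps, keeping track of the single kink and using the permutivity results (Proposition \ref{prop:stable-extensions-f} and Corollary \ref{cor:unique-extensions}) to transfer extensions. Write $w = 1 0^{2k} 1 v$, where $v$ is kinkless (by definition of $L$). The first key observation is that, by Lemma \ref{lem:one-kink-to-unstable}, after finitely many applications of $f$ the one-kink word $f^m(w)$ becomes left or right unstable; since a left kink word stays a left kink word under $f$ as long as it is stable (the kink cannot be destroyed alone, and no new kinks appear by Proposition \ref{prop:no-new-kinks-stable}), the word $f^m(w)$ is in fact a left kink word that is unstable, hence (because its kink sits at the very left) it must be \emph{right} unstable — it ends in $11 (01)^* (\epsilon+0)$. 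So after renaming we may assume $w$ itself ends in $11$ up to the trailing-$(01)$ pattern.

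Next I would argue that from a right unstable left kink word, one application of $f$ "eats" a bounded amount from the right end because the word shortens by $2$ on each side, and more importantly the $11$ suffix lets us later rebuild anything to its right. Concretely, the intended target $11$ occurs as a subword of $f^j(w)$ for a suitable $j$: track the kink $1 0^{2k} 1$ as it moves under the linear (rule-90) dynamics of Proposition \ref{prop:rule90} on the kinkless part, and observe that after enough steps the entire word to the right of the kink's left border has been consumed or has collapsed so that $f^N(w)$ is exactly $\cdots 1 1$ of bounded length, from which $11$ is visible. The cleanest route: apply Theorem \ref{thJEN}-style reasoning is unavailable (that needs finitely many $1$s, and $w$ is finite so it does apply) — indeed $w$ has finitely many $1$-symbols, so by Theorem \ref{thJEN} there is $n_1$ with $f^{n_1}(w)$ containing at most one kink; since $w$ has exactly one kink, parity (the remark after Theorem \ref{thJEN}) forces $f^{n_1}(w)$ to contain exactly one kink and, being short, it can be driven to contain $11$.

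Having located an $n$ with $11$ a subword of $f^n(w)$, the remaining task is the inclusion of \emph{extensions}: $\Sigma(11) \subseteq f^n(\Sigma(w))$. Here I would iterate Proposition \ref{prop:stable-extensions-f}. The subtlety is that $w$ and its iterates may pass through words of the exceptional form $0^\alpha (10)^n 1^\beta$, for which the proposition only gives the inclusion "up to parity". But the target $\Sigma(11)$ is generated by extending $11$ on both sides by \emph{arbitrary} kinkless words, and the parity restriction only constrains where $1$-symbols may appear relative to a fixed offset; by the remark following Proposition \ref{prop:stable-extensions-f}, every such parity-restricted extension of $f^m(w)$ is hit, and composing over the finitely many steps — using Corollary \ref{cor:unique-extensions} on the non-exceptional steps to get genuine equality, and the parity-remark on the (at most finitely many) exceptional steps — yields that the preimage under $f^n$ of each word in $\Sigma(11)$ is a nonempty subset of $\Sigma(w)$. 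Passing from $\tilde\Sigma$ to $\Sigma$ is the routine "claims about $\tilde\Sigma$ follow easily" step.

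The main obstacle I anticipate is the bookkeeping around the exceptional words $0^\alpha (10)^n 1^\beta$: these are precisely the "pure kink plus linear debris" configurations, and a left kink word will generically become one of them after the kinkless part has been shredded by rule 90 (which tends to produce $(10)^n$-type patterns). One must check that passing through such a word at most loses control over the parity of $1$-positions — never over the \emph{length} or the presence of $11$ — so that the final extension class $\Sigma(11)$ is still fully covered. I would handle this by fixing, from the start, the parity class of positions in which we allow $1$-symbols in the extensions of $w$, propagating it forward deterministically, and only at the very end observing that $\Sigma(11)$ decomposes as a union over parity classes, each of which is realized.
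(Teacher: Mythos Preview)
Your proposal has a genuine gap in how it handles stability and how it uses Proposition~\ref{prop:stable-extensions-f}. First, a left kink word beginning with $11$ is \emph{left} unstable (it matches $(\epsilon+0)(10)^* 11 (0+1)^*$ with the empty prefix), not right unstable as you claim; so the reduction via Lemma~\ref{lem:one-kink-to-unstable} does not land you where you think. More importantly, Proposition~\ref{prop:stable-extensions-f} only applies to \emph{stable} words, and the ``parity remark'' you invoke is likewise stated only for stable words of the exceptional form $0^\alpha(10)^n 1^\beta$. Once the leading kink has shrunk to $11$, the word you are iterating is unstable, and the proposition simply gives you nothing --- the counterexample $w=0011$ in the paper shows that $f(\Sigma(w)) \subseteq \Sigma(f(w))$ can genuinely fail. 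So the bookkeeping you anticipate around exceptional words is not the real obstacle; instability is.

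There is also a direction problem. Finding $n$ with $11$ a \emph{subword} of $f^n(w)$ only gives $\Sigma(f^n(w)) \subseteq \Sigma(11)$, which is the wrong inclusion; to get $\Sigma(11) \subseteq f^n(\Sigma(w))$ via the equality case of Proposition~\ref{prop:stable-extensions-f} you need $f^n$ of something to equal $11$ \emph{exactly}. The paper's fix is precisely what is missing from your outline: instead of iterating $f$ on $w$ itself, pick a specific extension $w' \in \Sigma(w)$ --- typically $w' = 00\cdot w$, or $00\cdot w\cdot 0$, or $00\cdot w\cdot 00$ depending on the shape of $w$ --- so that $w'$ and all its forward iterates stay stable and non-exceptional, and so that $f^n(w') = 11$ on the nose for some $n$. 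Then $\Sigma(11) = \Sigma(f^n(w')) = f^n(\Sigma(w')) \subseteq f^n(\Sigma(w))$. The induction is on $|w|$, and the case split is on whether the leading kink has width ${>}0$ (then $f$ already shortens and stays in $L$) or width $0$ (then pad by $00$ on the left to restore stability before applying $f$).
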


\begin{proof}
We prove the result by induction on the size of $w$.
There are a few cases:
\begin{enumerate}
    \item If $|w| = 2$, then $w = 11$ and the result is immediate. Otherwise $|w| \geq 3$, and we suppose that the property holds for all shorter words.
    \item If $w$ begins with $10^{2k}1$ for some $k > 0$, then $f(w) \in L$ and $|f(w)| < |w|$.
    Then $\Sigma(11) \subseteq f^n(\Sigma(f(w)))$ for some $n \in \NN$ by the induction hypothesis, and since $w \in S$, Proposition \ref{prop:stable-extensions-f} gives $f^n(\Sigma(f(w))) \subseteq f^{n+1}(\Sigma(w))$.
    \item If $w$ is of the form $11 (01)^k 0$ for some $k \in \NN$, then we consider $w' = 00 w 0 \in \Sigma(w)$.
    We have $f^{k+2}(w') = 11$ and $\Sigma(f^{k+2}(w')) = f^{k+2}(\Sigma(w'))$ by Proposition \ref{prop:stable-extensions-f}, and we can choose $n = k+2$.
    \item If $w$ is of the form $11 (01)^k$ and some $k \in \NN$, then we consider $w' = 00 w 00 \in \Sigma(w)$, and conclude as in the previous case.
    \item If $w$ is of the form $11 (01)^k 00 u$ for some word $u$ and $k \in \NN$, then we consider $w' = 00 w$.
    We have $f(w') = 1 0^{2(k+1)} 1 v$ for some word $v$.
    Then we treat $f(w')$ as in case 2 and conclude.
\end{enumerate}
\end{proof}

The following lemma is a direct consequence from the previous one, it shows that any one kink word can be produced at any position from any other one kink word at position 0.

\begin{lemma}\label{lm1k}
Let $w$ and $w'$ be two words containing one kink.
Then for all $i \in \ZZ$, there exists $n \in \NN$ such that $\Tilde \Sigma(w', 0) \subseteq f^n(\Tilde \Sigma(w, i))$.
\end{lemma}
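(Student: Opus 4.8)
The plan is to reduce the statement to Lemma~\ref{prreduc} together with the combinatorial behaviour of a single kink under $f$. Fix $i \in \ZZ$ and the two one-kink words $w$ and $w'$. The strategy has three movements: first push the kink in $w$ to the left boundary of the word (making it a \emph{left kink word}), then use Lemma~\ref{prreduc} to replace it by $11$, and finally expand $11$ into $w'$ at the correct position~$0$ while tracking how the position index shifts under each application of $f$.

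First I would handle the reduction of $w$ to a left kink word. A configuration $x \in \tilde\Sigma(w, i)$ has exactly one kink, so by Theorem~\ref{thJEN} applied to a suitable finite-support modification --- or rather by using Lemma~\ref{lem:one-kink-to-unstable} applied to a long enough finite subword $0^m w 0^m$ --- there is some $n_1$ after which $f^{n_1}$ of that subword is unstable, hence eventually one can arrange that the single kink sits at the left end of a controlled finite window. Concretely, pad $w$ to $u = 0^m w 0^m$; by Proposition~\ref{prop:stable-extensions-f} (iterated, since padded-by-$00$ words are stable and not of the excluded form $0^\alpha(10)^n1^\beta$) the extension sets $\tilde\Sigma(f^k(u), \cdot)$ are images of $\tilde\Sigma(u, \cdot)$, so it suffices to argue at the level of finite words. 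After finitely many steps the single kink is at the left border of some word $w_1 \in L$, and $\tilde\Sigma(w_1, i_1) \subseteq f^{n_1}(\tilde\Sigma(w, i))$ for an appropriate $i_1$ depending on $i$ and $n_1$ (each application of $f$ shifts the position bookkeeping by $+1$, per Proposition~\ref{prop:stable-extensions-f}).

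Next, apply Lemma~\ref{prreduc} to $w_1 \in L$: there is $n_2$ with $\Sigma(11) \subseteq f^{n_2}(\Sigma(w_1))$, and the $\tilde\Sigma$-version (which the lemma's proof, via Proposition~\ref{prop:stable-extensions-f}, also yields) gives $\tilde\Sigma(11, i_2) \subseteq f^{n_2}(\tilde\Sigma(w_1, i_1))$ for a suitable $i_2$. Now $w'$, being a one-kink word, is a subword of $f(11 \cdot z)$ for an appropriate padding $z$ --- more carefully, $w'$ can be produced in a bounded number $n_3$ of steps from a sufficiently long word of the form $0^a 11 0^b$, by simply running $f$ forward: a single kink's linear environment is governed by rule $90$ (Proposition~\ref{prop:rule90}) and by permutivity of $f_{90}$ on the kinkless parts, any desired kinkless pattern adjacent to the kink can be realized. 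Choosing $a, b$ and $n_3$ so that the kink lands in the right spot, and composing with the appropriate shift, we get $\tilde\Sigma(w', 0) \subseteq f^{n_3}(\tilde\Sigma(11, i_2))$ once $i_2$ is chosen (which we may, by inserting extra padding $0$'s, which only costs more leading/trailing zeros and more applications of $f$). Setting $n = n_1 + n_2 + n_3$ and composing the three inclusions gives the claim.

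The main obstacle I expect is the position bookkeeping: each of Lemma~\ref{prreduc}, Proposition~\ref{prop:stable-extensions-f}, and the final expansion step shifts the relevant index, and the indices $i_1, i_2$ are not free --- they are forced by the number of $f$-iterations. To make $w'$ appear \emph{at position $0$} rather than at some inconvenient offset, I would exploit that we are free to prepend arbitrarily many $0$'s to the left of $w$ (adding leading zeros to the padded word $u$) and to run $f$ for more steps; since more zeros on the left shift the kink's eventual landing position, one gets enough freedom to hit $0$ exactly. The one subtlety to check is a parity constraint: the positions of $1$-symbols in a kinkless region under rule $90$ have a fixed parity class determined by the kink border, so $w'$ can only be placed at position $0$ if its $1$-symbols fall in the parity class reachable from the current kink position --- but by adjusting the number of leading zeros by one we can flip that parity, so this is resolvable. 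Everything else is a routine composition of the already-established propositions.
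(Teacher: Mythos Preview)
Your three-movement outline---reduce $w$ to a left kink word, invoke Lemma~\ref{prreduc} to reach $11$, then expand $11$ into $w'$---is exactly the paper's strategy, and the first two movements go through essentially as you say (modulo the small step of prepending $00$ and applying $f$ once to pass from a left-\emph{unstable} word to a genuine \emph{left kink word}, which you elide). The third movement, however, is not correctly argued, and your proposed fix for the positioning does not work as stated.

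You claim that ``$w'$ can be produced in a bounded number $n_3$ of steps from a sufficiently long word of the form $0^a 11 0^b$, by simply running $f$ forward''. This is false: iterating $f$ on $0^a 11 0^b$ yields one fixed trajectory (the rule-90 Sierpi\'nski pattern seeded at a single $11$), not an arbitrary one-kink word. The inclusion you need, $\tilde\Sigma(w',0) \subseteq f^{n_3}(\tilde\Sigma(11, i_2))$, is a surjectivity statement and must be obtained by going \emph{backward}. The paper does this explicitly: writing $w' = a \cdot 10^{2\ell}1 \cdot b$, one has $\tilde\Sigma(w',0) \subseteq \tilde\Sigma(10^{2\ell}1, |a|)$, and since the stable word $u = 0011(01)^{\ell}00$ satisfies $f(u) = 10^{2\ell}1$, the equality case of Proposition~\ref{prop:stable-extensions-f} gives $\tilde\Sigma(10^{2\ell}1, |a|) \subseteq f(\tilde\Sigma(11, |a|+1))$. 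Your appeal to ``permutivity of $f_{90}$ on the kinkless parts'' points in this direction but does not supply the construction that produces the correct kink width $2\ell$.

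As for the position bookkeeping: prepending extra $0$'s to $w$ and iterating longer does not give you predictable control over where the $11$ lands---the landing position $i_2$ after Lemma~\ref{prreduc} is a complicated function of the whole reduction, and each extra application of $f$ already shifts indices by $+1$, so naive padding is cancelled. The paper does not try to adjust $i_2$; it accepts whatever $i_2$ arises and then \emph{translates} the $11$ to the required spot $|a|+1$ using the family $w_0 = 0011(01)^k 00$, for which $f^{k+2}(w_0) = 11$ and all intermediate words are stable, so Proposition~\ref{prop:stable-extensions-f} yields $\tilde\Sigma(11, |a|+1) \subseteq f^{|a|-p+3}(\tilde\Sigma(11, p))$. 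This explicit translation step is the missing ingredient; once you have it, no parity juggling is needed.
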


The idea is to let the kink of $w$ evolve into a $11$ whose context we can freely choose.
Then we can easily produce another $11$ at any desired position on a later time step.
This $11$ can then be transformed into $w'$ in a single step.

\begin{proof}
Let $k \in \NN$ be the smallest number for which $f^k(w)$ is unstable.
We may assume that it is left unstable.
By Proposition \ref{prop:no-new-kinks-stable}, $f^k(w)$ contains one kink, which must be $11$.
Then $f(00 \cdot f^k(w))$ is a left kink word.

According to Lemma \ref{prreduc}, there exists $m \in \NN$ such that $\Sigma(11) \subseteq f^m(\Sigma(f(00\cdot f^k(w))))$, and Proposition \ref{prop:stable-extensions-f} gives $f^m(\Sigma(f(00\cdot f^k(w)))) \subseteq f^{k + m+1}(\Sigma(w))$.
We can deduce that there exists a position $p \in \ZZ$ such that $\Tilde \Sigma(11, p) \subseteq f^{k + m+1}(\Tilde \Sigma(w, i))$.

Since $w'$ is a one kink word, it has the form $a 1 0^{2\ell} 1 b$ for some $a, b \in \{0,1\}^*$ and $\ell \in \NN$.
Suppose that $p \leq |a|$, the other case being similar.
Consider $w_0 = 0011 (01)^{|a|-p+1}00$.
We have $f^{|a|-p + 3}(w_0) = 11$, and all the intermediate steps are stable, hence Proposition \ref{prop:stable-extensions-f} gives $\Tilde \Sigma(11, |a|+1) = f^{|a|-p + 3}(\Tilde \Sigma(w_0, p-2)) \subseteq f^{|a|-p + 3}(\Tilde \Sigma(11, p))$. %We now call $n_1 = k +m + 1 - p + 2$.

%There exists $\ell \in \NN$ such that $w' \in \Sigma(1 0^{2 \ell} 1)$.
The word $u = 0011 (01)^{\ell} 00$ is stable and $f(u) = 1 0^{2\ell} 1$, so we obtain $\Tilde \Sigma(w', 0) \subseteq \tilde \Sigma(1 0^{2\ell} 1, |a|) \subseteq f(\tilde \Sigma(u'), |a|-1) \subseteq f(\tilde \Sigma(11, |a|+1))$ by Proposition \ref{prop:stable-extensions-f}.
Putting everything together, we have $\tilde \Sigma(w', 0) \subseteq f^{k+m+|a|-p+5}(\tilde \Sigma(w, i))$, as claimed.
%Hence there exists a position $p' \in \ZZ$ such that $\Tilde \Sigma(w', p') \subseteq f(\Tilde \Sigma(11,0)) \subseteq f^{n_1 + 1}(\Tilde \Sigma(w, i))$.
%We can choose $i = p - p'$, and the claim follows.
\end{proof}

\begin{theorem}
\label{thm:one-kink}
Every one-kink word occurs in $\gls(f_{18})$.
\end{theorem}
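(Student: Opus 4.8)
Our plan is to apply the combinatorial characterization of Lemma~\ref{lemma:combchar} with the trivial seed $s = \epsilon$ at position $i = 0$, closely imitating the proof of Theorem~\ref{thm:no-kinks}. Fix a one-kink word $w$, take arbitrary $q_1, q_2 \in \{0,1\}^*$, and aim to exhibit infinitely many $k \in \NN$ with $f^k([q_1 q_2]_{-|q_1|}) \cap [w]_0 \neq \emptyset$. The idea is that inside a sufficiently $0$-padded finite window we can first collapse all but one of the kinks using Theorem~\ref{thJEN}, and then invoke Lemma~\ref{lm1k} to move the surviving kink to position $0$ and paint the prescribed kinkless context of $w$ around it.

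In detail, for each large $n$ we would argue as follows. Append to $q_1 q_2$ a short word $r$ (either the empty word, or a word producing exactly one new kink, so that both parities are attainable) such that $z := q_1 q_2 r$ contains an odd number of kinks. Put $u = 0^{2n+2} \cdot z \cdot 0^{2n+2}$, positioned so that $[u]_p \subseteq [q_1 q_2]_{-|q_1|}$ for the appropriate $p$. For every $m \le n$ the word $f^m(u)$ begins and ends in $00$, hence is stable and not of the form $0^\alpha (10)^k 1^\beta$, so iterating Proposition~\ref{prop:stable-extensions-f} yields the exact equality $f^n(\tilde\Sigma(u,p)) = \tilde\Sigma(f^n(u), p+n)$. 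Using Theorem~\ref{thJEN} together with Proposition~\ref{prop:no-new-kinks-stable} --- exactly as in Corollary~\ref{coJEN}, except that the surviving kink count is odd and therefore equal to $1$ --- we may fix $n$ large enough that $f^n(u)$ contains exactly one kink. Lemma~\ref{lm1k}, applied with source word $f^n(u)$ at position $p+n$ and target $w$ at position $0$, then supplies $n' \in \NN$ with $\tilde\Sigma(w,0) \subseteq f^{n'}(\tilde\Sigma(f^n(u),p+n))$. Combining these inclusions, $\tilde\Sigma(w,0) \subseteq f^{n+n'}(\tilde\Sigma(u,p)) \subseteq f^{n+n'}([q_1 q_2]_{-|q_1|})$; since $\tilde\Sigma(w,0)$ is a nonempty subset of $[w]_0$, the time $k = n+n'$ works. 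As $n+n' \ge n$ can be made arbitrarily large, infinitely many such $k$ exist, and Lemma~\ref{lemma:combchar} gives that $w$ occurs in $\gls(f_{18})$.

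We do not expect a serious obstacle: granting Lemma~\ref{lm1k}, the statement reduces to essentially the same bookkeeping as Theorem~\ref{thm:no-kinks}. The only point requiring a little care is the reduction to exactly one kink, since Corollary~\ref{coJEN} is stated only for words with an even number of kinks; one must record its odd-parity counterpart --- a word with an odd number of kinks, padded by enough $0$s, is carried by some iterate of $f$ to a word with exactly one kink --- which is immediate from the same two ingredients, Theorem~\ref{thJEN} bounding the surviving kink count by $1$ and Proposition~\ref{prop:no-new-kinks-stable} (applied to the stable padded word) forbidding it to drop to $0$. A secondary routine check, carried out exactly as in the proof of Theorem~\ref{thm:no-kinks}, is that the double-width padding $0^{2n+2}$ keeps the window stable for all $n$ steps, so that the extension identities of Proposition~\ref{prop:stable-extensions-f} indeed hold with equality.
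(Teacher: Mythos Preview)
Your proposal is correct and matches the paper's proof essentially line for line: both use the empty seed, pad $q_1 q_2 r$ (with $r$ chosen to make the kink count odd) by $0^{2n+2}$ on each side, iterate Proposition~\ref{prop:stable-extensions-f} through the stable window to get $f^n(\tilde\Sigma(u,p)) = \tilde\Sigma(f^n(u),p+n)$, and then invoke Lemma~\ref{lm1k} on the one-kink word $f^n(u)$. Your explicit remark that the odd-parity analogue of Corollary~\ref{coJEN} follows from Theorem~\ref{thJEN} together with Proposition~\ref{prop:no-new-kinks-stable} is exactly what the paper leaves implicit.
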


\begin{proof}
Let $w \in \{0,1\}^*$ be a one-kink word. We consider the seed $\epsilon$. Let $q_1, q_2 \in \{0,1\}^*$ be words, and choose $r \in \{0,1\}^*$ and $n_0 \leq n \in \NN$ as in the proof of Theorem \ref{thm:no-kinks}, except that $q_1 \cdot q_2 \cdot r$ should contain an odd number of kinks and every configuration in $f^n(\tilde \Sigma(0^n q_1 q_2 r 0^n, -|q_1|-n))$ should contain a single king. Call $u = 0^{2n+2} \cdot q_1 \cdot \epsilon \cdot q_2 \cdot r \cdot 0^{2n+2}$.

For all $m \leq n$, $f^m(u)$ is a stable one-kink word, so that $\Tilde \Sigma(f^n(u), -|q_1|-n-2) = f^n(\Tilde \Sigma(u, -|q_1|-2n-2))$.
According to Lemma \ref{lm1k}, there exists $k \in \NN$ such that $\Tilde \Sigma(w, 0) \subseteq f^k(\Tilde \Sigma(f^n(u), -|q_1|-n-2)) = f^{n+k}(\Tilde \Sigma(u, -|q_1|-2n-2))$.
This means that $[w]$ intersects $f^{n+k}([q_1 q_2]_{-|q_1|})$.
Since $n$ can be arbitrarily large, Lemma \ref{lemma:combchar} implies that $w$ occurs in $\gls(f)$.
%
%We call $n_1 = n+n_0$. Let $u_1 \in f^{-(n+k)}(\Sigma(w)) \cap \Sigma(u)$, then for some $k_1 \in \NN$, $u_1{}_{[k_1, k_1 +|u|-1]} = u$. We call $p_1 = -|q_1|-n-k$, then $f^{n_1}(\Tilde \Sigma(u_1, p_1)) = \Tilde \Sigma(w, p)$.
%
%Again, according to lemma \ref{lm1k}, we can show that there exists $n_1' \in \NN$ such that $\Tilde \Sigma(w, p) \subseteq f^{n_1'}(\Tilde \Sigma(w,p)$, hence $\Tilde \Sigma(w, p) \subseteq f^{n_1 + n_1'}(\Tilde \Sigma(u_1, p_1))$. We call $n_2 = n_1 + n_1'$, let $u_2 \in f^{-n_2}(\Sigma(w)) \cap \Sigma(u_1)$, then for some $k_2 \in \NN$, $u_2{}_{[k_2, k_2 + |u_1|-1]} = u_1$. We call $p_2 = p_1-k_2$, then $f^{n_2}(\Tilde \Sigma(u_2,p_2)) = \Tilde \Sigma(w, p)$.
%
%We can iterate the construction to produce three sequences $(u_l)_{l \in \NN^+}$, $(p_l)_{l \in \NN^+}$ and $(n_l)_{l \in \NN^+}$ such that for all $l \in \NN^+$, $[u_l]_{p_l} \subseteq [u_{l+1}]_{p_{l+1}}$ and $f^{-n_l}([w]_p) \subseteq [u_l]_{p_l}$. 
%
%$([u_l]_{p_l})_{l \in \NN^+}$ is a decreasing sequence of nonempty closed sets, hence there exists $x \in \bigcap_{l \in \NN^+} [u_l]_{p_l}$. Such an $x$ verifies $x \in [q1 \cdot \epsilon \cdot q_2$ and for all $l \in \NN^+$, $x \in f^{-n_l}([w]_p)$, hence finally $w \in \gls_18$.
\end{proof}

\subsection{Words with two kinks}

In this section, we determine the set $P$ of words with exactly two kinks that appear in the generic limit set of rule $18$. The idea is to erase all existing kinks except two, then bring the two kinks close to each other in order to produce the word $1101001$ and then to show that all words of $P$ can be produced from a two-kink configuration that contains $1101001$ at any position.

Denote the reversal of a word $w \in \{0,1\}^*$ by $w^R$.
For a set $L \subseteq \{0,1\}^*$, we denote $L^R = \{w^R \mid w \in L\}$.

\begin{definition}
We call $B \subset \{0,1\}^*$ the set of two-kink words that begin with $11$, end with a kink (which can overlap with the prefix $11$), and are not of the form $11 \cdot (01)^k \cdot 1$ for $k \in \NN$.
\end{definition}

This definition is useful as $B$ is a set of words containing two kinks that we can prevent from colliding.
The following lemma states that we can direct the two kinks of a word of $B$ so that they can be brought close and produce the word $1101001$.

\begin{lemma}\label{lm2k1}
Let $w \in B \cup B^R$ be a word.
Then there exists $n \in \NN$ such that $\Sigma(1101001) \subseteq f^n(\Sigma(w))$.
\end{lemma}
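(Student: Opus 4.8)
The plan is to prove this by induction on $|w|$, following the same template as Lemma~\ref{prreduc} but now tracking two kinks rather than one. Since $B^R$ is handled by the mirror-symmetry of $f_{18}$, it suffices to treat $w \in B$. Write $w = 11 z$ where the second kink, say $1 0^{2\ell} 1$, sits at the right end of $w$ (possibly overlapping the prefix $11$, which forces small base cases). The base cases are the short words in $B$: by the exclusion of the form $11(01)^k 1$, the smallest members of $B$ are words like $1101$ (with $\ell=0$, kinks $11$ and the final $11$ — wait, this needs care) — more precisely one checks by hand that the shortest words of $B$ already contain $1101001$ as a subword after a bounded number of steps, or reduce to such a word; I would verify this directly for all $w \in B$ with $|w|$ below a small constant.

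For the inductive step I would split into cases according to the left part of $w$, mirroring cases~2--5 of Lemma~\ref{prreduc}. First, if $w$ begins with a kink $1 0^{2k} 1$ with $k > 0$, then by Proposition~\ref{prop:rule90} (rule 90 on the kinkless part) the left kink ``moves'' and $f(w)$ is again a two-kink word, strictly shorter, still beginning with $11$ after we observe that $f$ applied to $1 0^{2k} 1 \cdots$ produces $1 0^{2k-2} 1 \cdots$ or a $11$; checking it still lies in $B$ (or in $B^R$, or already contains $1101001$) lets me invoke the induction hypothesis and then lift via Proposition~\ref{prop:stable-extensions-f} since $w \in S$. Second, if $w$ begins $11(01)^k 0 0 u$, I pad on the left with $00$ and apply $f$ some $k+O(1)$ times: the prefix $11(01)^k 00$ collapses toward a kink $1 0^{2(k+1)} 1$ adjacent to $f(u)$, reducing to the first case. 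Third, the delicate sub-case is when $w$ has the form $11(01)^k$ or $11(01)^k 0$ — here the whole word is ``linear'' except for the two-kink structure, and I would choose a specific finite extension $w' = 00 w 00 \in \Sigma(w)$ (or $00 w 0$) and compute a bounded number of steps of $f$ explicitly to reach a configuration containing $1101001$, using that $f^j$ on $(01)^k$-type patterns is completely understood via rule 90. The point of excluding $11(01)^k 1$ from $B$ is precisely that those words have their two kinks ``interlocked'' in a way that collapses them immediately; every other word in $B$ keeps the kinks separable long enough to steer them.

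The key engine throughout is: (a) Proposition~\ref{prop:stable-extensions-f}, to pull back an inclusion $\Sigma(1101001) \subseteq f^m(\Sigma(v))$ for a shorter/simpler $v$ to $\Sigma(1101001) \subseteq f^{m+1}(\Sigma(w))$ whenever $w \in S$ and $f(w) \supseteq v$ appropriately; (b) the rule-90 description of the kinkless dynamics (Proposition~\ref{prop:rule90}) to see how the two kinks drift and to design the padding words that bring them together; and (c) once the two kinks are at minimal separation I need to produce exactly $1101001$ — one checks that two adjacent kinks $11$ and $1001$ in the right phase give, after one step, a word containing $1101001$, or symmetrically that $1101001$ is itself the one-step image of a short explicit two-kink word, so it is enough to steer the kinks to that configuration. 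I would state and use a small auxiliary computation: $f(\texttt{00110100100}) $ or a similar explicit word equals something whose finite extensions contain $1101001$, analogous to how Lemma~\ref{prreduc} used $f^{k+2}(00\,11(01)^k 0\,0) = 11$.

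The main obstacle I anticipate is the bookkeeping in the ``linear'' sub-case $w = 11(01)^k$ (and its one-letter variants): here one cannot simply reduce the length by applying $f$ and quoting the induction hypothesis, because $f$ on such a word may leave $B$ entirely (it can produce a word of the form $0^\alpha(10)^n 1^\beta$, exactly the degenerate shape excluded from the equality case of Proposition~\ref{prop:stable-extensions-f}). So for that family I must do an honest finite computation: pick the right finite extension, iterate $f$ a controlled number of times governed by $k$, and exhibit $1101001$ inside. Getting the phases (even/odd positions of the $1$s, which constrain the $0^\alpha$-vs-$0$ padding choices) consistent across these iterations — the same parity nuisance that appears in the proof of Theorem~\ref{thm:no-kinks} — is where the proof will be most error-prone, though not conceptually hard.
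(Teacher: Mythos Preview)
Your overall strategy matches the paper's (induction on $|w|$, symmetry for $B^R$, lifting via Proposition~\ref{prop:stable-extensions-f}), but the case analysis is built on the wrong template and misses the real obstruction. Your ``first case'' ($w$ begins $10^{2k}1$ with $k>0$) is vacuous, since every $w\in B$ begins with $11$; your ``third case'' ($w=11(01)^k$ or $11(01)^k0$) contains only one kink and so never lies in $B$. That leaves only your ``second case'' $w=11(01)^k00u$, which you propose to reduce to the vacuous first case---so the induction does not close. The correct decomposition must track \emph{both} kinks: the paper also splits on the size $\ell$ of the right kink $10^{2\ell}1$ and on whether the two kinks overlap, giving eight cases rather than three. (A smaller point: mirror symmetry from $B^R$ only yields $\Sigma(1001011)\subseteq f^n(\Sigma(w))$, since $(1101001)^R=1001011$; one further explicit computation, $f^3(10\cdot1001011\cdot0100)=1101001$, is needed to finish.)

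The difficulty you do not anticipate is that padding and applying $f$ need not shorten $w$ nor move it to a structurally simpler case. There are infinite families in $B$ (the paper's Cases~6 and~7, roughly $w=11\cdots 1001$ and $w=11\cdots 11$) for which the natural step $w\mapsto f(00\,w)^R$ or $w\mapsto f^2(00\,w\,00)$ lands back in the \emph{same} case with the \emph{same} length, so naive induction loops forever. The paper breaks these loops by a backward-in-time argument: assuming the process never escapes the case after $|w|$ iterations, one propagates the forced border patterns inward and determines $w$ completely (it must be a specific periodic word such as $1(100010)^m1001$ or $11000(101000)^m11$); that exceptional word is then dispatched with a different padding, e.g.\ $0010\cdot w\cdot 01$. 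This loop-breaking is the heart of the proof. The parity bookkeeping you flag as ``the main obstacle'' is, by comparison, a routine nuisance.
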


The proof of this lemma contains a lot of different cases, however, the cases are quite natural and the proof of each case is easy. The main idea of the proof is that by forbidding the kinks to move away from each other, it is possible to bring them eventually closer. When the two kinks are on the verge of colliding, it is possible to avoid the collision and to instead make them closer to each other as illustrated in figure \ref{fig:kink-avoid-merge}.

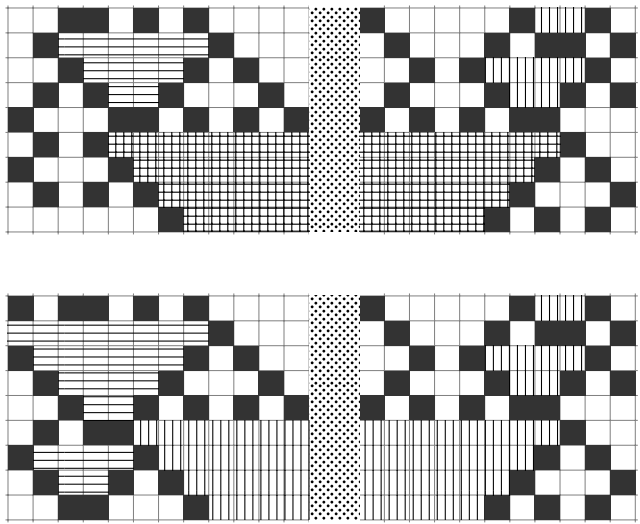
\begin{figure}[htp]
\centering
    \begin{tabular}{cc}
    \begin{subfigure}[b]{.5\linewidth}
    \begin{tikzpicture}[scale=0.33]
\draw[step=1cm, gray, very thin](-0.1,-0.1)grid(25.1,9.1);
\fill[black!80](2,8)rectangle(4,9);
\fill[black!80](5,8)rectangle(6,9);
\fill[black!80](7,8)rectangle(8,9);
\fill[black!80](14,8)rectangle(15,9);
\fill[black!80](20,8)rectangle(21,9);
\fill[black!80](23,8)rectangle(24,9);
\fill[black!80](1,7)rectangle(2,8);
\fill[black!80](8,7)rectangle(9,8);
\fill[black!80](15,7)rectangle(16,8);
\fill[black!80](19,7)rectangle(20,8);
\fill[black!80](21,7)rectangle(23,8);
\fill[black!80](24,7)rectangle(25,8);
\fill[black!80](2,6)rectangle(3,7);
\fill[black!80](7,6)rectangle(8,7);
\fill[black!80](9,6)rectangle(10,7);
\fill[black!80](16,6)rectangle(17,7);
\fill[black!80](18,6)rectangle(19,7);
\fill[black!80](23,6)rectangle(24,7);
\fill[black!80](1,5)rectangle(2,6);
\fill[black!80](3,5)rectangle(4,6);
\fill[black!80](6,5)rectangle(7,6);
\fill[black!80](10,5)rectangle(11,6);
\fill[black!80](15,5)rectangle(16,6);
\fill[black!80](19,5)rectangle(20,6);
\fill[black!80](22,5)rectangle(23,6);
\fill[black!80](24,5)rectangle(25,6);
\fill[black!80](0,4)rectangle(1,5);
\fill[black!80](4,4)rectangle(6,5);
\fill[black!80](7,4)rectangle(8,5);
\fill[black!80](9,4)rectangle(10,5);
\fill[black!80](11,4)rectangle(12,5);
\fill[black!80](14,4)rectangle(15,5);
\fill[black!80](16,4)rectangle(17,5);
\fill[black!80](18,4)rectangle(19,5);
\fill[black!80](20,4)rectangle(22,5);
\fill[black!80](1,3)rectangle(2,4);
\fill[black!80](3,3)rectangle(4,4);
\fill[black!80](22,3)rectangle(23,4);
\fill[black!80](0,2)rectangle(1,3);
\fill[black!80](4,2)rectangle(5,3);
\fill[black!80](21,2)rectangle(22,3);
\fill[black!80](23,2)rectangle(24,3);
\fill[black!80](1,1)rectangle(2,2);
\fill[black!80](3,1)rectangle(4,2);
\fill[black!80](5,1)rectangle(6,2);
\fill[black!80](20,1)rectangle(21,2);
\fill[black!80](24,1)rectangle(25,2);
\fill[black!80](6,0)rectangle(7,1);
\fill[black!80](19,0)rectangle(20,1);
\fill[black!80](21,0)rectangle(22,1);
\fill[black!80](23,0)rectangle(24,1);

\fill[white](12,-1)rectangle(14,10);
\fill[pattern=crosshatch dots, pattern color=black] (12,0) rectangle (14,9);

\fill[pattern=horizontal lines, pattern color=black] (2,7) rectangle (8,8);
\fill[pattern=horizontal lines, pattern color=black] (3,6) rectangle (7,7);
\fill[pattern=horizontal lines, pattern color=black] (4,5) rectangle (6,6);

\fill[pattern=vertical lines, pattern color=black] (21,8) rectangle (23,9);
\fill[pattern=vertical lines, pattern color=black] (19,6) rectangle (23,7);
\fill[pattern=vertical lines, pattern color=black] (20,5) rectangle (22,6);

\fill[pattern=grid, pattern color=black] (14,3) rectangle (22,4);
\fill[pattern=grid, pattern color=black] (14,2) rectangle (21,3);
\fill[pattern=grid, pattern color=black] (14,1) rectangle (20,2);
\fill[pattern=grid, pattern color=black] (14,0) rectangle (19,1);

\fill[pattern=grid, pattern color=black] (4,3) rectangle (12,4);
\fill[pattern=grid, pattern color=black] (5,2) rectangle (12,3);
\fill[pattern=grid, pattern color=black] (6,1) rectangle (12,2);
\fill[pattern=grid, pattern color=black] (7,0) rectangle (12,1);
\end{tikzpicture}
    \end{subfigure}
    \\
    \begin{subfigure}[b]{.5\linewidth}
    \begin{tikzpicture}[scale=0.33]
\draw[step=1cm, gray, very thin](-0.1,-0.1)grid(25.1,9.1);
\fill[black!80](0,8)rectangle(1,9);
\fill[black!80](2,8)rectangle(4,9);
\fill[black!80](5,8)rectangle(6,9);
\fill[black!80](7,8)rectangle(8,9);
\fill[black!80](14,8)rectangle(15,9);
\fill[black!80](20,8)rectangle(21,9);
\fill[black!80](23,8)rectangle(24,9);
\fill[black!80](0,6)rectangle(1,7);
\fill[black!80](8,7)rectangle(9,8);
\fill[black!80](15,7)rectangle(16,8);
\fill[black!80](19,7)rectangle(20,8);
\fill[black!80](21,7)rectangle(23,8);
\fill[black!80](24,7)rectangle(25,8);
\fill[black!80](7,6)rectangle(8,7);
\fill[black!80](9,6)rectangle(10,7);
\fill[black!80](16,6)rectangle(17,7);
\fill[black!80](18,6)rectangle(19,7);
\fill[black!80](23,6)rectangle(24,7);
\fill[black!80](1,5)rectangle(2,6);
\fill[black!80](6,5)rectangle(7,6);
\fill[black!80](10,5)rectangle(11,6);
\fill[black!80](15,5)rectangle(16,6);
\fill[black!80](19,5)rectangle(20,6);
\fill[black!80](22,5)rectangle(23,6);
\fill[black!80](24,5)rectangle(25,6);
\fill[black!80](2,4)rectangle(3,5);
\fill[black!80](5,4)rectangle(6,5);
\fill[black!80](7,4)rectangle(8,5);
\fill[black!80](9,4)rectangle(10,5);
\fill[black!80](11,4)rectangle(12,5);
\fill[black!80](14,4)rectangle(15,5);
\fill[black!80](16,4)rectangle(17,5);
\fill[black!80](18,4)rectangle(19,5);
\fill[black!80](20,4)rectangle(22,5);
\fill[black!80](1,3)rectangle(2,4);
\fill[black!80](3,3)rectangle(5,4);
\fill[black!80](22,3)rectangle(23,4);
\fill[black!80](0,2)rectangle(1,3);
\fill[black!80](5,2)rectangle(6,3);
\fill[black!80](21,2)rectangle(22,3);
\fill[black!80](23,2)rectangle(24,3);
\fill[black!80](1,1)rectangle(2,2);
\fill[black!80](4,1)rectangle(5,2);
\fill[black!80](6,1)rectangle(7,2);
\fill[black!80](20,1)rectangle(21,2);
\fill[black!80](24,1)rectangle(25,2);
\fill[black!80](2,0)rectangle(4,1);
\fill[black!80](7,0)rectangle(8,1);
\fill[black!80](19,0)rectangle(20,1);
\fill[black!80](21,0)rectangle(22,1);
\fill[black!80](23,0)rectangle(24,1);

\fill[white](12,-1)rectangle(14,10);
\fill[pattern=crosshatch dots, pattern color=black] (12,0) rectangle (14,9);

\fill[pattern=horizontal lines, pattern color=black] (0,7) rectangle (8,8);
\fill[pattern=horizontal lines, pattern color=black] (1,6) rectangle (7,7);
\fill[pattern=horizontal lines, pattern color=black] (2,5) rectangle (6,6);
\fill[pattern=horizontal lines, pattern color=black] (3,4) rectangle (5,5);
\fill[pattern=horizontal lines, pattern color=black] (4,5) rectangle (4,4);
\fill[pattern=horizontal lines, pattern color=black] (1,2) rectangle (5,3);
\fill[pattern=horizontal lines, pattern color=black] (2,1) rectangle (4,2);

\fill[pattern=vertical lines, pattern color=black] (21,8) rectangle (23,9);
\fill[pattern=vertical lines, pattern color=black] (19,6) rectangle (23,7);
\fill[pattern=vertical lines, pattern color=black] (20,5) rectangle (22,6);
\fill[pattern=vertical lines, pattern color=black] (14,3) rectangle (22,4);
\fill[pattern=vertical lines, pattern color=black] (14,2) rectangle (21,3);
\fill[pattern=vertical lines, pattern color=black] (14,1) rectangle (20,2);
\fill[pattern=vertical lines, pattern color=black] (14,0) rectangle (19,1);

\fill[pattern=vertical lines, pattern color=black] (5,3) rectangle (12,4);
\fill[pattern=vertical lines, pattern color=black] (6,2) rectangle (12,3);
\fill[pattern=vertical lines, pattern color=black] (7,1) rectangle (12,2);
\fill[pattern=vertical lines, pattern color=black] (8,0) rectangle (12,1);
\end{tikzpicture}
 
    \end{subfigure}
    \end{tabular}
    \caption{Two kinks collide and annihilate each other, but it is possible to avoid collision by making the leftmost kink longer.}
    \label{fig:kink-avoid-merge}
\end{figure}

\begin{proof}
First we show that it suffices to prove the result for $w \in B$.
Namely, then for $w \in B^R$ we have $\Sigma(1001011) \subseteq f^n(\Sigma(w))$ for some $n \in \NN$ by symmetry, and a direct computation shows that $f^3(10 \cdot 1001011 \cdot 0100) = 1101001$, so that $\Sigma(1101001) \subseteq f^{n+3}(\Sigma(w))$.

Let then $w \in B$ be arbitrary.
We proceed by induction on the length of $w$.

\paragraph{Case 0: $|w| \leq 7$.}
There are only a few possibilities for $w$.
If $w = 1101001$, the result is immediate.
If $w = 11001$, then $\Sigma(1001011) \subseteq f^3(\Sigma(w))$ as shown in Figure \ref{fig:2k_forw_11001} and $\Sigma(1101001) \subseteq f^3(\Sigma(1001011))$ as shown in Figure \ref{fig:2k_forw_switch}, hence $\Sigma(1101001) \subseteq f^6(\Sigma(w))$.
If $w = 1100001$, then $\Sigma(1101001) \subseteq f^2(\Sigma(1001011)) \subseteq f^5(\Sigma(w))$ as shown in Figure \ref{fig:2k_forw_1100001}.
If $w = 1100011$, then $\Sigma(1101001) \subseteq f^8(\Sigma(000010 \cdot w \cdot 0101000100)) \subseteq f^8(\Sigma(w))$ as shown in Figure \ref{fig:2k_forw_000}.

\begin{figure}[htp]
    \centering
    \begin{tikzpicture}[scale=0.6]
\draw[step=1cm, gray, very thin](-0.1,-0.1)grid(13.1,4.1);

\fill[black!80](2,3)rectangle(3,4);
\fill[black!80](4,3)rectangle(5,4);
\fill[black!80](6,3)rectangle(8,4);
\fill[black!80](10,3)rectangle(11,4);
\fill[black!80](12,3)rectangle(13,4);
\fill[black!80](1,2)rectangle(2,3);
\fill[black!80](8,2)rectangle(10,3);
\fill[black!80](2,1)rectangle(3,2);
\fill[black!80](7,1)rectangle(8,2);
\fill[black!80](10,1)rectangle(11,2);
\fill[black!80](3,0)rectangle(4,1);
\fill[black!80](6,0)rectangle(7,1);
\fill[black!80](8,0)rectangle(10,1);

\fill[pattern=north west lines, pattern color=black!80] (0,2) rectangle (1,3);
\fill[pattern=north west lines, pattern color=black!80] (0,1) rectangle (2,2);
\fill[pattern=north west lines, pattern color=black!80] (0,0) rectangle (3,1);

\fill[pattern=north west lines, pattern color=black!80] (12,2) rectangle (13,3);
\fill[pattern=north west lines, pattern color=black!80] (11,1) rectangle (13,2);
\fill[pattern=north west lines, pattern color=black!80] (10,0) rectangle (13,1);
\end{tikzpicture}
    \caption{We can see here that $w = 0010101100101$ is stable, $f(w)$ and $f^2(w)$ are stable and $f^3(w) = 1001011$. We can deduce that $\Sigma(1001011) = f^3(\Sigma(w)) \subseteq f^3(\Sigma(1001011))$.}
    \label{fig:2k_forw_11001}
\end{figure}

\begin{figure}[htp]
    \centering
    \begin{tikzpicture}[scale=0.6]
\begin{scope}[yscale=1,xscale=-1]
\draw[step=1cm, gray, very thin](-0.1,-0.1)grid(13.1,4.1);

\fill[black!80](2,3)rectangle(3,4);
\fill[black!80](4,3)rectangle(6,4);
\fill[black!80](7,3)rectangle(8,4);
\fill[black!80](10,3)rectangle(11,4);
\fill[black!80](12,3)rectangle(13,4);
\fill[black!80](1,2)rectangle(2,3);
\fill[black!80](8,2)rectangle(10,3);
\fill[black!80](2,1)rectangle(3,2);
\fill[black!80](7,1)rectangle(8,2);
\fill[black!80](10,1)rectangle(11,2);
\fill[black!80](3,0)rectangle(4,1);
\fill[black!80](6,0)rectangle(7,1);
\fill[black!80](8,0)rectangle(10,1);

\fill[pattern=north west lines, pattern color=black!80] (0,2) rectangle (1,3);
\fill[pattern=north west lines, pattern color=black!80] (0,1) rectangle (2,2);
\fill[pattern=north west lines, pattern color=black!80] (0,0) rectangle (3,1);

\fill[pattern=north west lines, pattern color=black!80] (12,2) rectangle (13,3);
\fill[pattern=north west lines, pattern color=black!80] (11,1) rectangle (13,2);
\fill[pattern=north west lines, pattern color=black!80] (10,0) rectangle (13,1);
\end{scope}
\end{tikzpicture}
    \caption{We can see here that $w = 1010010110100$ is stable, $f(w)$ and $f^2(w)$ are stable and $f^3(w) = 1101001$. We can deduce that $\Sigma(1101001) = f^3(\Sigma(w)) \subseteq f^3(\Sigma(1001011))$.}
    \label{fig:2k_forw_switch}
\end{figure}

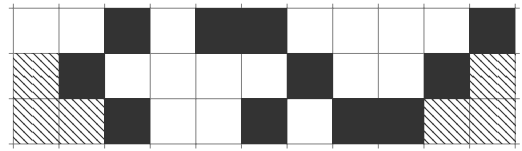
\begin{figure}[htp]
    \centering
    \begin{tikzpicture}[scale=0.6]
\draw[step=1cm, gray, very thin](-0.1,-0.1)grid(11.1,3.1);

\fill[black!80](2,2)rectangle(3,3);
\fill[black!80](4,2)rectangle(6,3);
\fill[black!80](10,2)rectangle(11,3);
\fill[black!80](1,1)rectangle(2,2);
\fill[black!80](6,1)rectangle(7,2);
\fill[black!80](9,1)rectangle(10,2);
\fill[black!80](2,0)rectangle(3,1);
\fill[black!80](5,0)rectangle(6,1);
\fill[black!80](7,0)rectangle(9,1);

\fill[pattern=north west lines, pattern color=black!80] (0,1) rectangle (1,2);
\fill[pattern=north west lines, pattern color=black!80] (0,0) rectangle (2,1);

\fill[pattern=north west lines, pattern color=black!80] (10,1) rectangle (11,2);
\fill[pattern=north west lines, pattern color=black!80] (9,0) rectangle (11,1);

\end{tikzpicture}
    \caption{We can see here that $w = 00101100001$ is stable, $f(w)$ is stable and $f^2(w) = 1001011$. We can deduce that $\Sigma(1101001) \subseteq f^2(\Sigma(1001011)) = f^5(\Sigma(w)) \subseteq f^5(\Sigma(1100001))$.}
    \label{fig:2k_forw_1100001}
\end{figure}

\begin{figure}[htp]
    \centering
    \begin{tikzpicture}[scale=0.4]
\draw[step=1cm, gray, very thin](-0.1,-0.1)grid(23.1,9.1);
\fill[black!80](4,8)rectangle(5,9);
\fill[black!80](6,8)rectangle(8,9);
\fill[black!80](11,8)rectangle(13,9);
\fill[black!80](14,8)rectangle(15,9);
\fill[black!80](16,8)rectangle(17,9);
\fill[black!80](20,8)rectangle(21,9);
\fill[black!80](3,7)rectangle(4,8);
\fill[black!80](8,7)rectangle(9,8);
\fill[black!80](10,7)rectangle(11,8);
\fill[black!80](17,7)rectangle(18,8);
\fill[black!80](19,7)rectangle(20,8);
\fill[black!80](21,7)rectangle(22,8);
\fill[black!80](2,6)rectangle(3,7);
\fill[black!80](4,6)rectangle(5,7);
\fill[black!80](7,6)rectangle(8,7);
\fill[black!80](11,6)rectangle(12,7);
\fill[black!80](16,6)rectangle(17,7);
\fill[black!80](5,5)rectangle(7,6);
\fill[black!80](8,5)rectangle(9,6);
\fill[black!80](10,5)rectangle(11,6);
\fill[black!80](12,5)rectangle(13,6);
\fill[black!80](15,5)rectangle(16,6);
\fill[black!80](17,5)rectangle(18,6);
\fill[black!80](4,4)rectangle(5,5);
\fill[black!80](13,4)rectangle(15,5);
\fill[black!80](18,4)rectangle(19,5);
\fill[black!80](5,3)rectangle(6,4);
\fill[black!80](12,3)rectangle(13,4);
\fill[black!80](15,3)rectangle(16,4);
\fill[black!80](17,3)rectangle(18,4);
\fill[black!80](6,2)rectangle(7,3);
\fill[black!80](11,2)rectangle(12,3);
\fill[black!80](13,2)rectangle(15,3);
\fill[black!80](7,1)rectangle(8,2);
\fill[black!80](10,1)rectangle(11,2);
\fill[black!80](15,1)rectangle(16,2);
\fill[black!80](8,0)rectangle(10,1);
\fill[black!80](11,0)rectangle(12,1);
\fill[black!80](14,0)rectangle(15,1);

\fill[pattern=north west lines, pattern color=black!80] (0,8) rectangle (0,9);
\fill[pattern=north west lines, pattern color=black!80] (0,7) rectangle (1,8);
\fill[pattern=north west lines, pattern color=black!80] (0,6) rectangle (2,7);
\fill[pattern=north west lines, pattern color=black!80] (0,5) rectangle (3,6);
\fill[pattern=north west lines, pattern color=black!80] (0,4) rectangle (4,5);
\fill[pattern=north west lines, pattern color=black!80] (0,3) rectangle (5,4);
\fill[pattern=north west lines, pattern color=black!80] (0,2) rectangle (6,3);
\fill[pattern=north west lines, pattern color=black!80] (0,1) rectangle (7,2);
\fill[pattern=north west lines, pattern color=black!80] (0,0) rectangle (8,1);

\fill[pattern=north west lines, pattern color=black!80] (22,7) rectangle (23,8);
\fill[pattern=north west lines, pattern color=black!80] (21,6) rectangle (23,7);
\fill[pattern=north west lines, pattern color=black!80] (20,5) rectangle (23,6);
\fill[pattern=north west lines, pattern color=black!80] (19,4) rectangle (23,5);
\fill[pattern=north west lines, pattern color=black!80] (18,3) rectangle (23,4);
\fill[pattern=north west lines, pattern color=black!80] (17,2) rectangle (23,3);
\fill[pattern=north west lines, pattern color=black!80] (16,1) rectangle (23,2);
\fill[pattern=north west lines, pattern color=black!80] (15,0) rectangle (23,1);
\end{tikzpicture}
    \caption{We can see here that $w = 00001011000110101000100$ is stable, $f(w), f^2(w), \ldots, f^7(w)$ are stable and $f^8(w) = 1101001$. We can deduce that $\Sigma(1101001) = f^8(\Sigma(w)) \subseteq f^8(\Sigma(1100011))$.}
    \label{fig:2k_forw_000}
\end{figure}

For the rest of the proof we suppose $|w| > 7$, and all shorter words of $B \cup B^R$ satisfy the claim.

\paragraph{Case 1: $w = 11 0^{2k} 1$ for some $k \in \NN$.}
Then $k \geq 3$. We consider $w_0 = 00 \cdot w$. Then, $f^2(w_0) = 1101  0^{2(k-2)} 1$. Evidently $f^2(w_0) \in B$, $|f^2(w_0)| < |w|$ and $\Sigma(f^2(w_0)) \subseteq f^2(\Sigma(w))$, so we can conclude by induction.

\paragraph{Case 2: $w = 1101 0^{2k} 1$ for some $k \in \NN$.}
Then $k \geq 2$. We consider $w_0 = 00 \cdot w$. If $k = 2$, then $f^2(w_0) = 1001011$, which we treated in Case 0. If $k = 3$, then $f^3(w_0) = 1100011$, which we already treated as well. If $k \geq 4$, then $f^3(w_0) = 110001 0^{2(k-3)} 1 \in B$ and $|f^3(w_0)| < |w|$. We can then conclude by induction.

\paragraph{Case 3: $w = 11 (01)^k 0^{2\ell} 1$ for some $k, \ell \geq 1$.}
We consider $w_0 = 00 \cdot w$. Then, $f(w_0) = 1 0^{2(k+1)} 1 0^{2(\ell-1)} 1$, so that for $m = \min(k+1, \ell-1)$ the word $w' = f^{m+1}(w_0)$ begins and/or ends with $11$. If $\ell = 1$, then $|w'| = |w|$, and we can handle $w'$ as in Case 1. Otherwise $|w'| < |w|$.
If $w' \in B \cup B^R$, then we can conclude by induction.
If $w' \notin B \cup B^R$, then $\ell = k+2$ and $w' = 11 (01)^{2(k+1)} 1$.
Then we instead consider $w_1 = 0010 \cdot w$, for which $f^{k+1}(w_1) = 1001 \cdot u \cdot 11$ for some word $u$ of length $2(k+1)$.
This word is in $B^R$ and is shorter than $w$, so we conclude by induction.

If we are not in the above cases, then the gap between the two kinks is longer than one cell and contains an occurrence of $00$.
Hence there exist $k, \ell \in \NN$ and a word $u$ such that $w = 11 (01)^k 00 \cdot u \cdot 1 0^{2\ell} 1$.
The remaining cases constrain the values of $k$ and $\ell$.

\paragraph{Case 4: $\ell \geq 2$.}
We again consider $w_0 = 00 \cdot w$, and split into subcases depending on the value of $k$.
If $k = 0$, we consider $w_0' = f^2(w_0)$, which begins with $11$ and ends with $10^{2(\ell-2)}1$.
If $w_0' \in B$, we can conclude by induction as $|w_0'| < |w|$. Else, $\ell = 2$ and $w_0' = 11 (01)^{(|w|-6)/2} 011$. Then we instead consider $v_0 = 0010 \cdot w \cdot 00$, which satisfies $f^3(v_0) = 11 (00)^{(|w|-3)/6} \cdot 1$.
We can now conclude using Case 1, as $|f^3(v_0)| = |w|$.

If $0 < k < \ell-2$, then $f^{k+2}(w_0) \in B$ and $|w_0| < |w|$, so we can conclude using induction.

If $k = \ell-2$, then consider $w' = f^\ell(w_0)$.
If $w' \in B$, we have $|w'| < |w|$ and we conclude by induction.
If $w' \notin B$, we instead consider $w_1 = 0010 \cdot w$. Then $f^{\ell+1}(w_1) \in B^R$ and $|f^{\ell+1}(w_1)| < |w|$, so we conclude using induction. 

If $k > \ell-2$, then $f^\ell(w_0) \in B^R$ and $|f^\ell(w_0)| < |w|$. We conclude by induction.

\paragraph{Case 5: $\ell \leq 1$ and $k \geq 1$.}
If $\ell = 0$, then we consider $w_0 = 00 \cdot w \cdot 00$. Now $f(w_0)$ begins with $1 0^{2(k+2)} 1$ and ends with $1 0^{2 m} 1$ for some $m \geq 1$.
We proceed as in Case 3, using $w_1 = 0010 \cdot w \cdot 00$ instead of $w_0$ is needed.
If $\ell = 1$, then $f(w_0)$ is in $B^R$, has the same length as $w$, and belongs to Case 3, which we already solved.

\paragraph{Case 6: $\ell = 1$ and $k = 0$.}
We consider $w_0 = 00 \cdot w$, and denote $w_0' = f(w_0)^R$, which begins with $11$ and ends with $1 001$.
If $w_0'$ belongs to one of the already solved cases 0--5, then we are done.
Otherwise, $w_0' \in B$ belongs to Case 6.
Then we repeat this process: for each $i \in \NN$, as long as $w_i'$ belongs to Case 6, define $w_{i+1} = 00 \cdot w_i'$ and $w_{i+1}' = f(w_{i+1})^R = 11 \cdot u_{i+1} \cdot 1001$.
Suppose that we reach $w_{|w|}$ this way.
We can now go backward in time as shown in figure \ref{fig:2k_back_11_1001}: for each $j \geq 0$, the length-$j$ prefix and suffix of $u_{|w|-j}$ are determined locally.
In the end, we determine that $w = 1 \cdot (100010)^{(|w| - 5)/6} \cdot 1001$. 

\begin{figure}[htp]
    \centering
    \begin{tikzpicture}[scale=0.5]
\draw[step=1cm, gray, very thin](-0.1,-2.1)grid(22.1,7.1);
\draw[black, very thick](0,-2)--(0, 7);
\draw[black, very thick](0,7)--(4, 7);
\draw[black, very thick](4,-2)--(4, 7);
\draw[black, very thick](0,-2)--(4, -2);
\fill[black!80](1,-2)rectangle(3,-1);
\fill[black!80](1,0)rectangle(3,1);
\fill[black!80](1,2)rectangle(3,3);
\fill[black!80](1,4)rectangle(3,5);
\fill[black!80](1,6)rectangle(3,7);
\fill[black!80](0,-1)rectangle(1,0);
\fill[black!80](0,1)rectangle(1,2);
\fill[black!80](0,3)rectangle(1,4);
\fill[black!80](0,5)rectangle(1,6);
\fill[black!80](3,-1)rectangle(4,0);
\fill[black!80](3,1)rectangle(4,2);
\fill[black!80](3,3)rectangle(4,4);
\fill[black!80](3,5)rectangle(4,6);
\fill[black!80](5,1)rectangle(6,2);
\fill[black!80](5,3)rectangle(6,4);
\fill[black!80](5,5)rectangle(6,6);
\fill[black!80](6,2)rectangle(7,3);
\fill[black!80](6,4)rectangle(7,5);
\fill[black!80](6,6)rectangle(7,7);
\fill[black!80](8,4)rectangle(9,5);
\fill[black!80](8,6)rectangle(9,7);
\fill[black!80](9,5)rectangle(10,6);

\fill[white](10,-3)rectangle(12,8);
\fill[pattern=crosshatch dots, pattern color=black] (10,-2) rectangle (12,7);

\draw[black, very thick](18,-2)--(18, 7);
\draw[black, very thick](18,7)--(22, 7);
\draw[black, very thick](22,-2)--(22, 7);
\draw[black, very thick](18,-2)--(22, -2);
\fill[black!80](19,-1)rectangle(21,0);
\fill[black!80](19,1)rectangle(21,2);
\fill[black!80](19,3)rectangle(21,4);
\fill[black!80](19,5)rectangle(21,6);
\fill[black!80](18,-2)rectangle(19,-1);
\fill[black!80](18,0)rectangle(19,1);
\fill[black!80](18,2)rectangle(19,3);
\fill[black!80](18,4)rectangle(19,5);
\fill[black!80](18,6)rectangle(19,7);
\fill[black!80](21,-2)rectangle(22,-1);
\fill[black!80](21,0)rectangle(22,1);
\fill[black!80](21,2)rectangle(22,3);
\fill[black!80](21,4)rectangle(22,5);
\fill[black!80](21,6)rectangle(22,7);
\fill[black!80](16,0)rectangle(17,1);
\fill[black!80](16,2)rectangle(17,3);
\fill[black!80](16,4)rectangle(17,5);
\fill[black!80](16,6)rectangle(17,7);
\fill[black!80](15,1)rectangle(16,2);
\fill[black!80](15,3)rectangle(16,4);
\fill[black!80](15,5)rectangle(16,6);
\fill[black!80](13,3)rectangle(14,4);
\fill[black!80](13,5)rectangle(14,6);
\fill[black!80](12,4)rectangle(13,5);
\fill[black!80](12,6)rectangle(13,7);

\fill[pattern=north west lines, pattern color=black!80] (4,-2) rectangle (10,-1);
\fill[pattern=north west lines, pattern color=black!80] (5,-1) rectangle (10,0);
\fill[pattern=north west lines, pattern color=black!80] (6,0) rectangle (10,1);
\fill[pattern=north west lines, pattern color=black!80] (7,1) rectangle (10,2);
\fill[pattern=north west lines, pattern color=black!80] (8,2) rectangle (10,3);
\fill[pattern=north west lines, pattern color=black!80] (9,3) rectangle (10,4);
\fill[pattern=north west lines, pattern color=black!80] (12,-2) rectangle (18,-1);
\fill[pattern=north west lines, pattern color=black!80] (12,-1) rectangle (17,0);
\fill[pattern=north west lines, pattern color=black!80] (12,0) rectangle (16,1);
\fill[pattern=north west lines, pattern color=black!80] (12,1) rectangle (15,2);
\fill[pattern=north west lines, pattern color=black!80] (12,2) rectangle (14,3);
\fill[pattern=north west lines, pattern color=black!80] (12,3) rectangle (13,4);
\end{tikzpicture}
    \caption{At each step of the backward computation, we force the four left and four right symbols to be either $0110$ or $1001$, this in turn fixes the neighbors symbols to some value. After enough steps, the whole word is forced.}
    \label{fig:2k_back_11_1001}
\end{figure}
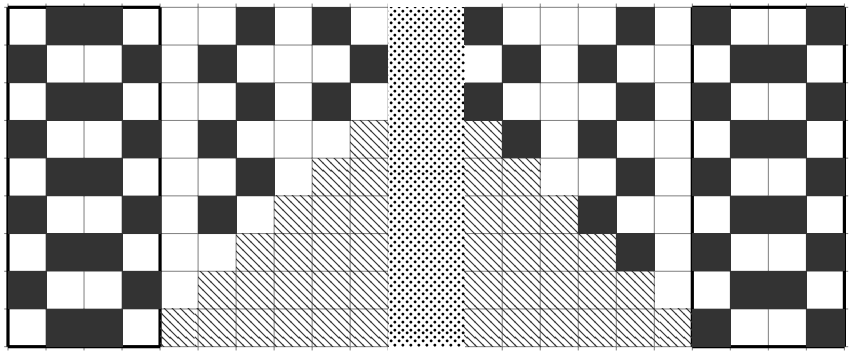

We now consider $v_0 = 0010 \cdot w \cdot 01$ instead of $w_0$. Then $v_0' = f^3(v_0) = 110 \cdot (101000)^{(|w|-5)/6} \cdot 11$, and $v_1' = f^4(00 \cdot v_0' \cdot 0001)$ verifies $v_1' \in B$ and $|v_1'| < |w|$. We can then conclude using induction.

\paragraph{Case 7: $\ell = k = 0$.}
We consider $w_0 = 00  \cdot w \cdot 00$ and $w_0' = f^2(w_0)$.
If $w_0'$ belongs to one of the already solved cases, then since $|w_0'| = |w|$, we are done.
If $w_0'$ belongs to Case 7, we again repeat the process: for each $i \in \NN$, as long as $w_i'$ belongs to Case 7, define $w_{i+1} = 00 \cdot w_i' \cdot 00$ and $w_{i+1}' = f^2(w_{i+1}) = 11 \cdot u_{i+1} \cdot 11$.
In the same way as in Case 6, if we reach $w_{|w|}$ in this way, we go backward in time as shown in Figure \ref{fig:2k_back_11_11} to fully determine that $w = 11000 \cdot (101000)^{(|w| - 7)/6} \cdot 11$.

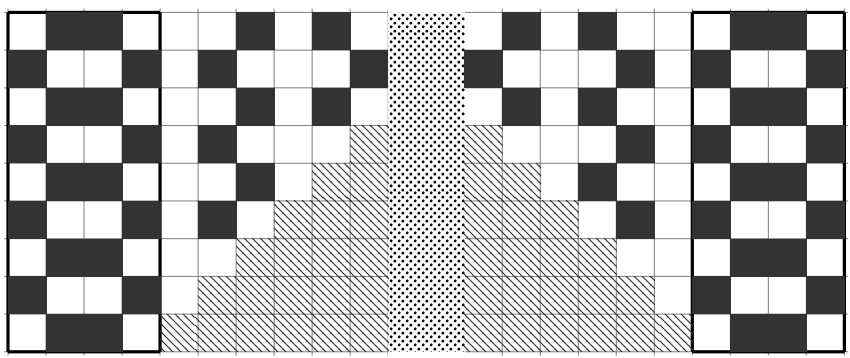
\begin{figure}[htp]
    \centering
    \begin{tikzpicture}[scale=0.5]
\draw[step=1cm, gray, very thin](-0.1,-2.1)grid(22.1,7.1);
\draw[black, very thick](0,-2)--(0, 7);
\draw[black, very thick](0,7)--(4, 7);
\draw[black, very thick](4,-2)--(4, 7);
\draw[black, very thick](0,-2)--(4, -2);
\fill[black!80](1,-2)rectangle(3,-1);
\fill[black!80](1,0)rectangle(3,1);
\fill[black!80](1,2)rectangle(3,3);
\fill[black!80](1,4)rectangle(3,5);
\fill[black!80](1,6)rectangle(3,7);
\fill[black!80](0,-1)rectangle(1,0);
\fill[black!80](0,1)rectangle(1,2);
\fill[black!80](0,3)rectangle(1,4);
\fill[black!80](0,5)rectangle(1,6);
\fill[black!80](3,-1)rectangle(4,0);
\fill[black!80](3,1)rectangle(4,2);
\fill[black!80](3,3)rectangle(4,4);
\fill[black!80](3,5)rectangle(4,6);
\fill[black!80](5,1)rectangle(6,2);
\fill[black!80](5,3)rectangle(6,4);
\fill[black!80](5,5)rectangle(6,6);
\fill[black!80](6,2)rectangle(7,3);
\fill[black!80](6,4)rectangle(7,5);
\fill[black!80](6,6)rectangle(7,7);
\fill[black!80](8,4)rectangle(9,5);
\fill[black!80](8,6)rectangle(9,7);
\fill[black!80](9,5)rectangle(10,6);

\fill[white](10,-3)rectangle(12,8);
\fill[pattern=crosshatch dots, pattern color=black] (10,-2) rectangle (12,7);

\draw[black, very thick](18,-2)--(18, 7);
\draw[black, very thick](18,7)--(22, 7);
\draw[black, very thick](22,-2)--(22, 7);
\draw[black, very thick](18,-2)--(22, -2);
\fill[black!80](19,-2)rectangle(21,-1);
\fill[black!80](19,0)rectangle(21,1);
\fill[black!80](19,2)rectangle(21,3);
\fill[black!80](19,4)rectangle(21,5);
\fill[black!80](19,6)rectangle(21,7);
\fill[black!80](18,-1)rectangle(19,0);
\fill[black!80](18,1)rectangle(19,2);
\fill[black!80](18,3)rectangle(19,4);
\fill[black!80](18,5)rectangle(19,6);
\fill[black!80](21,-1)rectangle(22,0);
\fill[black!80](21,1)rectangle(22,2);
\fill[black!80](21,3)rectangle(22,4);
\fill[black!80](21,5)rectangle(22,6);
\fill[black!80](16,1)rectangle(17,2);
\fill[black!80](16,3)rectangle(17,4);
\fill[black!80](16,5)rectangle(17,6);
\fill[black!80](15,2)rectangle(16,3);
\fill[black!80](15,4)rectangle(16,5);
\fill[black!80](15,6)rectangle(16,7);
\fill[black!80](13,4)rectangle(14,5);
\fill[black!80](13,6)rectangle(14,7);
\fill[black!80](12,5)rectangle(13,6);

\fill[pattern=north west lines, pattern color=black!80] (4,-2) rectangle (10,-1);
\fill[pattern=north west lines, pattern color=black!80] (5,-1) rectangle (10,0);
\fill[pattern=north west lines, pattern color=black!80] (6,0) rectangle (10,1);
\fill[pattern=north west lines, pattern color=black!80] (7,1) rectangle (10,2);
\fill[pattern=north west lines, pattern color=black!80] (8,2) rectangle (10,3);
\fill[pattern=north west lines, pattern color=black!80] (9,3) rectangle (10,4);
\fill[pattern=north west lines, pattern color=black!80] (12,-2) rectangle (18,-1);
\fill[pattern=north west lines, pattern color=black!80] (12,-1) rectangle (17,0);
\fill[pattern=north west lines, pattern color=black!80] (12,0) rectangle (16,1);
\fill[pattern=north west lines, pattern color=black!80] (12,1) rectangle (15,2);
\fill[pattern=north west lines, pattern color=black!80] (12,2) rectangle (14,3);
\fill[pattern=north west lines, pattern color=black!80] (12,3) rectangle (13,4);
\end{tikzpicture}
    \caption{In a same way as in figure \ref{fig:2k_back_11_1001}, after enough steps of backward computation, the whole word is forced.}
    \label{fig:2k_back_11_11}
\end{figure}

We now consider $v_0 = 0010 \cdot w \cdot 0001$. Then $v_0' = f^3(v_0) = 110101 \cdot (000101)^{(|w|-7)/6} \cdot 001$ and $v_1' = f^4(00 \cdot v_0' \cdot 0100) = 1100 \cdot (000101)^{(|w|-7)/6} \cdot 001$. We are now in Case 6, which was already solved, with $|w| = |v_1'|$.
This finishes the proof.
\end{proof}

\begin{lemma}\label{lm2kmov}
Let $i, j \in \ZZ^2$ be two positions, then there exists $n \in \NN$ such that $\Tilde \Sigma(1101001, i) \subseteq f^n(\Tilde \Sigma(1101001, j)$. 
\end{lemma}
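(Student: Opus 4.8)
The plan is to reduce the claim to two explicit three-step ``gadget chains'' and then iterate and compose them, using only monotonicity of $f$ and the exact-extension equality of Proposition \ref{prop:stable-extensions-f}. For $i=j$ one takes $n=0$, so it suffices to show that the word $1101001$ can be shifted one cell to the right, and one cell to the left.

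\textbf{Rightward shift.} I would verify the chain
\begin{align*}
\Tilde\Sigma(1101001,j) &\supseteq \Tilde\Sigma(001101001,j-2) \xrightarrow{\,f\,} \Tilde\Sigma(1000011,j-1)\\
&\supseteq \Tilde\Sigma(10000110100,j-1) \xrightarrow{\,f\,} \Tilde\Sigma(100100001,j) \xrightarrow{\,f\,} \Tilde\Sigma(1101001,j+1),
\end{align*}
where each ``$\supseteq$'' restricts to a more constrained cylinder (prepending $00$, respectively appending $0100$; neither changes the set of kinks) and each ``$\xrightarrow{\,f\,}$'' is one application of Proposition \ref{prop:stable-extensions-f}. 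The words $001101001$, $10000110100$, $100100001$ are stable, are not of the form $0^{\alpha}(10)^{k}1^{\beta}$, and carry exactly two kinks (the two kinks of the displayed copy of $1101001$), so $f$ acts on their $\Tilde\Sigma$-extensions as an exact shift of the position index; a direct computation gives $f(001101001)=1000011$, $f(10000110100)=100100001$, and $f(100100001)=1101001$. Since $f$ is monotone this yields $f^{3}(\Tilde\Sigma(1101001,j))\supseteq\Tilde\Sigma(1101001,j+1)$, hence $f^{3k}(\Tilde\Sigma(1101001,j))\supseteq\Tilde\Sigma(1101001,j+k)$ for all $k\in\NN$, which settles every $i\ge j$.

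\textbf{Leftward shift.} Here I would route through the reversed word $1001011$. An analogous chain
\begin{align*}
\Tilde\Sigma(1101001,j) &\supseteq \Tilde\Sigma(00101101001,j-4) \xrightarrow{\,f\,} \Tilde\Sigma(100000011,j-3)\\
&\supseteq \Tilde\Sigma(10000001100,j-3) \xrightarrow{\,f\,} \Tilde\Sigma(100001001,j-2) \xrightarrow{\,f\,} \Tilde\Sigma(1001011,j-1)
\end{align*}
(with the same stability and kink checks, and $f(00101101001)=100000011$, $f(10000001100)=100001001$, $f(100001001)=1001011$) gives $f^{3}(\Tilde\Sigma(1101001,j))\supseteq\Tilde\Sigma(1001011,j-1)$. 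Since $f_{18}$ commutes with reversal, the rightward shift above turns into $f^{3}(\Tilde\Sigma(1001011,q))\supseteq\Tilde\Sigma(1001011,q-1)$, and the ``swap'' chain turns around into $f^{3}(\Tilde\Sigma(1001011,q))\supseteq\Tilde\Sigma(1101001,q+1)$ (this last fact is also visible in Figure \ref{fig:2k_forw_switch}, where $f^{3}(1010010110100)=1101001$ and $1010010110100$ contains $1001011$). Composing ``swap to $1001011$'', then $\ell$ leftward shifts of $1001011$, then ``swap back'' gives $f^{3\ell+6}(\Tilde\Sigma(1101001,j))\supseteq\Tilde\Sigma(1101001,j-\ell)$ for every $\ell\in\NN$; together with the rightward shifts this reaches $\Tilde\Sigma(1101001,i)$ for every $i\in\ZZ$ (taking $n=3(i-j)$ when $i\ge j$ and $n=3(j-i)+6$ when $i<j$).

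The non-routine part is the bookkeeping: for each intermediate word one must confirm stability, non-membership in $\{0^{\alpha}(10)^{k}1^{\beta}\}$, and that it has exactly two kinks belonging to the relevant copy of $1101001$ or $1001011$, so that Proposition \ref{prop:stable-extensions-f} applies with equality and the restriction steps stay inside $\Tilde\Sigma(1101001,j)$. The one spot needing genuine care is tracking the position indices through the reversal map $\Tilde\Sigma(w,p)\mapsto\Tilde\Sigma(w^{R},-p-|w|+1)$, so that the constants in the ``swap back'' and in the final composition come out exactly right; everything else is a short finite verification. One could instead hunt for a direct leftward-shift chain for $1101001$, but the detour through $1001011$ appears to be the cleanest route.
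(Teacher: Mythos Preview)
Your argument is correct. The rightward shift is essentially the paper's: the paper exhibits a single stable word $0011010010000$ with $f^3(0011010010000)=1101001$ one cell to the right (Figure~\ref{fig:2k_forw_rightmov}), and your chain, once the two restriction steps are merged, passes through the very same intermediate $10000110100$. The bookkeeping you flag (stability, kink count, exclusion of the $0^\alpha(10)^n1^\beta$ form) does check out for every word you list; in particular $100000011$ is right unstable, but you never apply Proposition~\ref{prop:stable-extensions-f} to it, only to its stable extension $10000001100$.

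The leftward shift is where you diverge from the paper. The paper simply exhibits a direct $5$-step gadget (Figure~\ref{fig:2k_forw_leftmov}): a stable word containing $1101001$ whose fifth image is $1101001$ one cell to the left, and then inducts on $|i-j|$. Your route instead swaps to the mirror word $1001011$ in three steps, invokes the left-right symmetry of $f_{18}$ to turn the rightward shift into a leftward shift of $1001011$, and swaps back via Figure~\ref{fig:2k_forw_switch}. This is perfectly valid and has the conceptual appeal of reusing one gadget plus symmetry rather than discovering a second gadget, at the cost of a longer step count ($3\ell+6$ versus the paper's $5\ell$ for a displacement of $\ell$; in particular $9$ steps versus $5$ for a single left move) and the extra bookkeeping of tracking positions through the reversal map. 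The paper's direct gadget is shorter and avoids that bookkeeping entirely, so your closing remark that the detour ``appears to be the cleanest route'' is not quite borne out---but both approaches are sound.
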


\begin{proof}
We proceed by induction on $|i-j|$.
If $i = j$, we can choose $n = 0$.
If $i < j$, then $\tilde \Sigma(1101001, i+1) \subseteq f^5(\Tilde \Sigma(1101001, j))$ as shown in figure \ref{fig:2k_forw_leftmov} and we can conclude using induction.
If $i> j$, then, $\Tilde \Sigma(1101001, i-1) \subseteq f^3(\Tilde \Sigma(1101001, j)$ as shown in figure \ref{fig:2k_forw_rightmov} and we can conclude using induction.
\end{proof}

    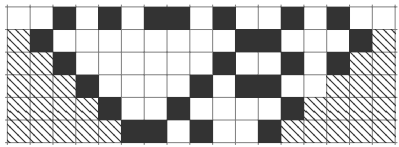
\begin{figure}[htp]
      \centering
    \begin{tikzpicture}[scale=0.3]
\draw[step=1cm, gray, very thin](-0.1,-0.1)grid(17.1,6.1);
\fill[black!80](2,5)rectangle(3,6);
\fill[black!80](4,5)rectangle(5,6);
\fill[black!80](6,5)rectangle(8,6);
\fill[black!80](9,5)rectangle(10,6);
\fill[black!80](12,5)rectangle(13,6);
\fill[black!80](14,5)rectangle(15,6);
\fill[black!80](1,4)rectangle(2,5);
\fill[black!80](10,4)rectangle(12,5);
\fill[black!80](15,4)rectangle(16,5);
\fill[black!80](2,3)rectangle(3,4);
\fill[black!80](9,3)rectangle(10,4);
\fill[black!80](12,3)rectangle(13,4);
\fill[black!80](14,3)rectangle(15,4);
\fill[black!80](3,2)rectangle(4,3);
\fill[black!80](8,2)rectangle(9,3);
\fill[black!80](10,2)rectangle(12,3);
\fill[black!80](4,1)rectangle(5,2);
\fill[black!80](7,1)rectangle(8,2);
\fill[black!80](12,1)rectangle(13,2);
\fill[black!80](5,0)rectangle(7,1);
\fill[black!80](8,0)rectangle(9,1);
\fill[black!80](11,0)rectangle(12,1);

\fill[pattern=north west lines, pattern color=black!80] (0,4) rectangle (1,5);
\fill[pattern=north west lines, pattern color=black!80] (0,3) rectangle (2,4);
\fill[pattern=north west lines, pattern color=black!80] (0,2) rectangle (3,3);
\fill[pattern=north west lines, pattern color=black!80] (0,1) rectangle (4,2);
\fill[pattern=north west lines, pattern color=black!80] (0,0) rectangle (5,1);

\fill[pattern=north west lines, pattern color=black!80] (16,4) rectangle (17,5);
\fill[pattern=north west lines, pattern color=black!80] (15,3) rectangle (17,4);
\fill[pattern=north west lines, pattern color=black!80] (14,2) rectangle (17,3);
\fill[pattern=north west lines, pattern color=black!80] (13,1) rectangle (17,2);
\fill[pattern=north west lines, pattern color=black!80] (12,0) rectangle (17,1);
\end{tikzpicture}
    \caption{Moving a $1101001$ one step to the left.}
    \label{fig:2k_forw_leftmov}
    \end{figure}

    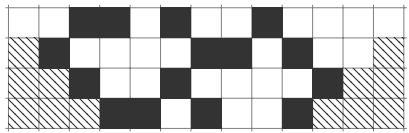
\begin{figure}[htp]
    \centering
    \begin{tikzpicture}[scale=0.4]
\draw[step=1cm, gray, very thin](-0.1,-0.1)grid(13.1,4.1);

\fill[black!80](2,3)rectangle(4,4);
\fill[black!80](5,3)rectangle(6,4);
\fill[black!80](8,3)rectangle(9,4);
\fill[black!80](1,2)rectangle(2,3);
\fill[black!80](6,2)rectangle(8,3);
\fill[black!80](9,2)rectangle(10,3);
\fill[black!80](2,1)rectangle(3,2);
\fill[black!80](5,1)rectangle(6,2);
\fill[black!80](10,1)rectangle(11,2);
\fill[black!80](3,0)rectangle(5,1);
\fill[black!80](6,0)rectangle(7,1);
\fill[black!80](9,0)rectangle(10,1);

\fill[pattern=north west lines, pattern color=black!80] (0,2) rectangle (1,3);
\fill[pattern=north west lines, pattern color=black!80] (0,1) rectangle (2,2);
\fill[pattern=north west lines, pattern color=black!80] (0,0) rectangle (3,1);

\fill[pattern=north west lines, pattern color=black!80] (12,2) rectangle (13,3);
\fill[pattern=north west lines, pattern color=black!80] (11,1) rectangle (13,2);
\fill[pattern=north west lines, pattern color=black!80] (10,0) rectangle (13,1);
\end{tikzpicture}
    \caption{Moving a $1101001$ one step to the right.}
    \label{fig:2k_forw_rightmov}
    \end{figure}

The following definition provides us with a set $P$ of words that can be produced from $1101001$, and as we will see later, these words constitute exactly the words of the generic limit set that have two kinks.

\begin{definition}
Let $w$ be a word that contains exactly two kinks.
We call $b(w)$ the smallest subword of $w$ that contains two kinks. 
If $b(w) = 1 0^{2k} 1 \cdot u \cdot 1 0^{2\ell} 1$ for some $k, \ell \in \NN$ and $u \in \{0,1\}^*$, then we call $\delta(w) = u$.
If $b(w)$ is of the form $10^{2k} 1 0^{2\ell} 1$, then we call $\delta(w) = \epsilon$.
\end{definition}

\begin{definition}
\label{def:P}
We define $P \subset \{0,1\}^*$ as the set of those two-kink words $w$ that satisfy all of the following:
\begin{itemize}
    \item $b(w) \neq 111$.
    \item If $b(w) = 11 v 11$, then $v$ contains an even number of $1$s.
    \item For all $k \in \NN$, $b(w) \neq 1 (100010)^k 1001$ and $b(w) \neq 1001 (010001)^k 1$.
\end{itemize}
\end{definition}

Our goal is to prove that $P$ coincides with the set of two-kink words of the generic limit set of $f$.
The first two conditions are necessary, since the words $111$ and $11v11$ with $v$ containing an odd number of $1$s have no preimages under $f$.
Namely, the only preimage of $11$ is $1001$, which already forbids $111$.
On the other hand, can prove by induction on the length of $w$ that if $w$ contains an even number of $1$s, then all preimages of $11w$ end in $01$, $10$ or $11$, and otherwise they all end in $00$.
Then $11v11$ has no preimage, as it would have to end in both $1001$ and $00ab$ for some $a, b \in \{0,1\}$.
Similarly, every two-kink word $11w11$ where $w$ contains an even number of $1$s has a unique two-kink preimage (and possibly other preimages with more kinks): if $w = 0^{2n_0-1} 1 0^{2n_1-1} 1 \cdots 1 0^{2n_{2k}-1}$, that preimage is $100 (10)^{n_0} 0^{2n_1} (10)^{n_2} 0^{2n_3} \cdots (10)^{n_{2k}} 01$.

We will now proceed backwards in time and try to find for every word in $P$ a parent that contains $1101001$.
Just as in Lemma \ref{lm2k1}, the proof is a combination of an induction and a case analysis.
There are quite many cases, but none of them is very difficult.
The main idea is that we are first trying to close the gap between the two kinks in the word, and then shorten the kinks.

\begin{lemma}\label{lm2k2}
Let $w \in P$. Then there exists $n \in \NN$ and a position $p \in \ZZ$ such that $\Tilde \Sigma(w, p) \subseteq f^n(\Tilde \Sigma(1101001, 0))$.
\end{lemma}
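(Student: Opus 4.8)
The plan is to prove the lemma by induction on the length of $w$, following the same strategy as the proof of Lemma \ref{lm2k1} but run ``backwards in time'': instead of pushing a given word forward to reach $1101001$, we must find a preimage chain that starts from (an extension of) $1101001$ and reaches an extension of $w$. The base case will handle all $w \in P$ with $|b(w)|$ below some small bound by explicit spacetime diagrams, analogous to Case 0 of Lemma \ref{lm2k1}; here the key point is that $1101001$ itself lies in $P$ and that the small words of $P$ can be checked one by one.

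For the inductive step I would organize the argument around the shape of $b(w) = 1 0^{2k} 1 \cdot \delta(w) \cdot 1 0^{2\ell} 1$ (or $b(w) = 1 0^{2k} 1 0^{2\ell} 1$), splitting into cases exactly as in Lemma \ref{lm2k1}: whether the gap $\delta(w)$ is empty, equals a single $01$-block, or contains an occurrence of $00$; and within each, the sizes of $k$, $\ell$ and the $(01)$-runs. In each case the idea is to produce $w$ (up to extension) as $f(w')$ for some shorter word $w' \in P$: typically one pads $w$ on both sides with a short known prefix/suffix, uses Corollary \ref{cor:unique-extensions} and Proposition \ref{prop:stable-extensions-f} to pull a stable extension of $w$ back through finitely many applications of $f$, checks that the resulting word $w'$ again lies in $P$ and is strictly shorter, and then invokes the induction hypothesis to get $\tilde\Sigma(w', p') \subseteq f^m(\tilde\Sigma(1101001, 0))$; composing gives $\tilde\Sigma(w, p) \subseteq f^{m+j}(\tilde\Sigma(1101001, 0))$. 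When the naive padding lands outside $P$ (for instance when two kinks are about to merge, or when the preimage would have the forbidden form $1(100010)^k 1001$), I would use the alternate padding trick from Cases 6 and 7 of Lemma \ref{lm2k1} — run the backward computation far enough to force the whole word's shape, then choose a different short prefix/suffix that steers into an already-solved case. The conditions defining $P$ (no $111$, even number of $1$s between the two $11$'s, and the two excluded periodic families) are precisely what guarantee that a two-kink preimage exists at each step, via the preimage formulas recalled just before the lemma.

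The main obstacle will be the bookkeeping in the ``merging'' cases: when $k$, $\ell$, or the $(01)$-runs are chosen so that a straightforward preimage would force the two kinks together (yielding $11$ somewhere the definition of $P$ does not allow, or the excluded periodic words), one has to detour exactly as in Lemma \ref{lm2k1}'s Cases 6--7. Making sure that the detoured word is (a) in $P$, (b) strictly shorter or reducible to an earlier case, and (c) connected to $w$ by an explicit stable spacetime diagram, is where the real work lies; the excluded families $1(100010)^k 1001$ and $1001(010001)^k 1$ in Definition \ref{def:P} should be exactly the obstructions that appear here, matching the ``forced word'' computed in Figures \ref{fig:2k_back_11_1001} and \ref{fig:2k_back_11_11}. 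Once each case is dispatched, the positional claim (the existence of $p$) is automatic: each application of $f$ shifts the relevant coordinate by $1$, and the starting extension $\tilde\Sigma(1101001, 0)$ can be translated freely by first applying Lemma \ref{lm2kmov} to move the occurrence of $1101001$ to whatever position the backward chain demands. I would finish by remarking, as in Lemma \ref{lm2k1}, that the $B^R$-type symmetry lets us assume $b(w)$ begins with the ``canonical'' orientation, halving the case analysis.
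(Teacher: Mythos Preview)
Your overall architecture is right---case split on the shape of $b(w)$, find a two-kink preimage $w'$ via Corollary~\ref{cor:unique-extensions}, check $w'\in P$, and recurse---and this is exactly what the paper does. But your induction variable is wrong, and this would make the argument collapse. You propose to induct on $|w|$ and to ``produce $w$ (up to extension) as $f(w')$ for some shorter word $w'\in P$''. Going backwards in time means taking preimages, and preimages under $f$ are \emph{longer}: if $f(w')=w$ then $|w'|=|w|+2$. So $w'$ is never shorter than $w$, and the induction as you describe it cannot terminate.

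The paper's fix is to induct on $|\delta(w)|$, the length of the gap between the two kinks of $b(w)$. The point is that even though the preimage $w'$ is longer as a word, one can arrange (by the choice of padding) that its two kinks sit closer together, so $|\delta(w')|<|\delta(w)|$; the cases where this fails are exactly the ones that fall through to an earlier, already-handled case with $|\delta(w')|=|\delta(w)|$. Your discussion of the ``merging'' obstructions and the excluded periodic families is on target, but those arise precisely when the preimage would \emph{not} shrink $\delta$, and the detours you allude to are what restore the strict decrease. Once you switch the induction to $|\delta(w)|$ (with the base case $|\delta(w)|\le 1$ handled explicitly), your outline matches the paper's proof.
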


\begin{proof}
We first handle the ``trivial'' cases.

\paragraph{Case 0.}
If $1101001$ is a subword of $w$, then the property is verified.
If $1001011$ is a subword of $w$, then $\Sigma(1001011) \subseteq f^3(\Sigma(1101001))$ as shown in Figure \ref{fig:2k_forw_switch}.
Hence there exists a position $p$ such that $\Tilde \Sigma(w, p) \subseteq f^3(\Tilde \Sigma(1101001, 0))$.

We proceed by induction on the length of $\delta(w)$.
Let $w$ be a word such that $|\delta(w)| = 0$. Then there exist some words $\alpha, \beta$ and some integers $k, \ell$ such that $w = \alpha 1 0^{2k} 1 0^{2\ell} 1 \beta$.

\paragraph{Case 1: $k = 0$ or $\ell = 0$.}
If $k = 0$ (the case $\ell = 0$ is symmetrical), then $w \in P$ implies $\ell \geq 2$. According to Corollary \ref{cor:unique-extensions}, there exists a unique word $\alpha'$ and a unique word $\beta'$ such that $w' = \alpha' \cdot 1001011 (01)^{\ell-2} 00 \cdot \beta'$ contains two kinks and $f(w') = w$. Since $w' \in \Sigma(1001011)$ and $w'$ is stable, we have $\tilde \Sigma(w, 1) = f(\tilde \Sigma(w',0))$. The word $w'$ belongs to Case 0, so $\tilde \Sigma(w',p) \subseteq f^n(\tilde \Sigma(1101001, 0))$ for some $p \in \ZZ$.
Then $\tilde \Sigma(w,p+1) \subseteq f^{n+1}(\tilde \Sigma(1101001, 0))$ by Proposition \ref{prop:stable-extensions-f}.
In the remainder of the proof, we implicitly apply this argument whenever we fall to an earlier case or use the induction hypothesis.

\paragraph{Case 2: $k \geq 1$ and $\ell \geq 1$.}
Now there exist unique words $\alpha'$ and $\beta'$ such that $w' = \alpha' \cdot 1 0^{2(k+1)} 11 (01)^{\ell-1} 00\cdot \beta'$ contains two kinks and $f(w') = w$. Since $w'$ is stable, we have $\Sigma(w) = f(\Sigma(w'))$. Hence, we can conclude by Case 1.

In the following, let $w \in P$ be a word with $|\delta(w)| > 0$. We now suppose the property holds for all words $w' \in P$ with $|\delta(w')| < |\delta(w)|$. Let $\alpha$ and $\beta$ be words and $k, \ell$ be integers such that $w = \alpha \cdot 1 0^{2k} 1 \cdot \delta(w) \cdot 1 0^{2\ell} 1 \cdot \beta$.
Because of Case 0, we may assume $k \leq \ell$.

\paragraph{Case 3: $|\delta(w)| = 1$.}
Then we must have $\delta(w) = 0$.
There exist unique words $\alpha'$ and $\beta'$ such that $w' = \alpha' \cdot 10^{2(k+1)}10^{2(l+1)}1 \cdot \beta'$ contains two kinks and $f(w') = w$. As $w'$ is stable, we can conclude using Case 2.

\paragraph{Case 4: $|\delta(w)| \geq 3$, $k= 0$ and $\ell=0$.}
As $w \in P$, the word $\delta(w)$ between the two kinks necessarily contains an even number of $1$s. According to the discussion following Definition \ref{def:P}, there exists a word of the form $w' = 1001 \delta 1001$ with two kinks such that $f(w') = w$. 
Now $w' \in P$ and $\delta(w') = \delta$ verifies $|\delta(w')| < |\delta(w)|$, so we can conclude by the induction hypothesis.
%so the induction hypothesis gives numbers $n \in \NN$ and $p \in \ZZ$ such that $\Tilde \Sigma(w', p) \in f^n(\Tilde \Sigma(1101001, 0))$. As $w'$ is stable, we have $\Tilde \Sigma(w, p-1) = f(\Tilde \Sigma(w', p))$. Thus $\Tilde \Sigma(w, p-1) \subseteq f^{n+1}(\Tilde \Sigma(1101001, 0))$, and we conclude.
    
\paragraph{Case 5: $k = 0$ and $\ell = 1$.}
Now there exists a unique word $u'$ such that $1001 \cdot u'$ contains one kink and $f(1001 \cdot u') = 11 \cdot \delta(w)$.
    
If $u'$ is of the form $v' \cdot 10$ for some word $v'$, then there exist unique words $\alpha'$ and $\beta'$ such that $w' = \alpha' \cdot 1001 \cdot v' \cdot 100001 \cdot \beta'$ contains two kinks and $f(w') = w$. Now $\delta(w') = v'$, hence $|\delta(w')| < |\delta(w)|$ and we can conclude by induction.
%There exists $n \in \NN$ and $p \in \ZZ$ such that $\Tilde \Sigma(w', p) \subseteq f^n(\Tilde \Sigma(1101001, 0))$. $w'$ is stable, hence $\Tilde \Sigma(w, p-1) = f(\Tilde \Sigma(w', p)) \subseteq f^{n+1}(\Tilde \Sigma(1101001, 0))$.
    
Otherwise $u'$ is of the form $v' \cdot 00$. There exists a word $u''$ such that $1001 \cdot u''$ contains one kink and $f(1001 \cdot u'')^R = u' \cdot 11$. We now consider $u''$: if it ends in $10$, we conclude as above, otherwise we continue.

 We suppose that we are able to repeat the process an infinite number of times: we can define words $\alpha^{(k)}$, $\beta^{(k)}$, $u^{(k)}$ for all $k \in \NN$ such that the word $w^{(k)} = \alpha^{(k)} \cdot 1001 \cdot u^{(k+1)} \cdot 11 \cdot \beta^{(k)}$ contains two kinks, $w^{(0)} = w$, and $f(w^{(k+1)})^R = w^{(k)}$. As in the proof of Case 6 of Lemma \ref{lm2k1}, and in Figure \ref{fig:2k_back_11_1001}, we can go backwards in time to deduce prefixes and suffixes of the $u^{(k)}$ and eventually determine that $w^{(|w|)} = 1 \cdot (100010)^{(|w| - 4)/6} \cdot 1001$. Then by direct computation we have $w = 1 \cdot (100010)^{(|w| - 4)/6} \cdot 1001$ as well. We then proved that $w \notin P$, a contradiction.
    
\paragraph{Case 6: $k = 0$ and $\ell>1$.}
Again, there exists a unique word $u'$ such that $1001 \cdot u'$ contains one kink and $f(1001 \cdot u') = 11 \cdot \delta(w)$. 
    
If $u'$ is of the form $v' \cdot 10$ for some word $v'$, then there exist unique words $\alpha'$ and $\beta'$ such that $w' = \alpha' \cdot 1 00 1 \cdot v' \cdot 1 0^{2(\ell+1)}1 \cdot \beta'$ contains two kinks and $f(w') = w$. Now $w' \in P$, and $\delta(w') = v'$ implying $|\delta(w')| < |\delta(w)|$, so we conclude by induction.
    
Otherwise $u'$ ends in $00$. There exist unique words $\alpha'$ and $\beta'$ such that $w' = \alpha' \cdot 1001 \cdot u' \cdot 11 (01)^{\ell-1} 00 \cdot \beta'$ contains two kinks and $f(w') = w$.
If $w' \in P$, then it is in Case 5 with $|\delta(w')| = |\delta(w)|$, and we can conclude.

Otherwise $w' \notin P$, that is, there exists $m \in \NN$ such that $u' \cdot 1 = (010001)^m$.
This implies $w = \alpha \cdot 1 (100010)^m 10^{2\ell}1 \cdot \beta$, and we call it the \emph{difficult subcase}.
We define
\[
u'' =
\begin{cases}
001011 (000000010101)^{m/2} 00001 & \text{if $m$ is even,} \\
1000000(101010000000)^{(m-1)/2} 1010100001 & \text{if $m$ is odd.}
\end{cases}
\]
In both cases it can be checked by a direct computation that $f^2(u'') = 1001 \cdot u' \cdot 1011$.
Then there are unique words $\alpha''$ and $\beta''$ such that $w'' = \alpha'' \cdot u'' \cdot \beta''$ has two kinks and satisfies $f^3(w'') = w$.
If $m$ is even, then $|\delta(w'')| = |\delta(w)|$ and $w''$ belongs to Case 6, and by its form it cannot be in the difficult subcase.
If $m$ is odd, then $|\delta(w'')| < |\delta(w)|$.
Hence we can conclude.

\paragraph{Case 7: $k \geq 1$ and $\ell \geq 1$.}
Again there exists a unique word $u'$ such that $1 0^{2(k+1)} 1 \cdot u'$ contains one kink and $f(10^{2(k+1)}1 \cdot u') = 10^{2k}1 \cdot \delta(w)$.
    
If $u'$ is of the form $v' \cdot 10$ for some word $v'$, then there exist unique words $\alpha'$ and $\beta'$ such that $w' = \alpha' \cdot 10^{2(k+1)}1 \cdot v' \cdot 10^{2(l+1)}1 \cdot \beta'$ contains two kinks and $f(w') = w$. We have $\delta(w') = v'$, hence $|\delta(w')| < |\delta(w)|$ and we conclude by induction.
    
Else, $u'$ ends in $00$. Then there exist unique words $\alpha'$ and $\beta'$ with $w' = \alpha' \cdot 10^{2(k+1)} 1 \cdot u' \cdot 11 (01)^{\ell-1} \cdot \beta'$ and $f(w') = w$. We have $w' \in P$ and $|\delta(w')| = |\delta(w)|$.
Moreover, $w'$ belongs to the already handled Case 6.
This concludes the proof.
\end{proof}

\begin{theorem}
\label{thm:two-kinks}
Let $w \in \{0,1\}^*$ be a word that contains two kinks.
Then $w$ occurs in $\gls(f_{18})$ if and only if $w \in P$.
\end{theorem}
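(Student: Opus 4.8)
The plan is to prove the two implications of the equivalence by separate arguments, both resting on the lemmas above. The \emph{sufficiency} direction ($w\in P\Rightarrow w$ occurs in $\gls(f_{18})$) follows the scheme of Theorems~\ref{thm:no-kinks} and~\ref{thm:one-kink} and Lemma~\ref{lemma:combchar}: I would take the empty seed at position $0$, and for arbitrary context words $q_1,q_2$ and arbitrarily large $n$, exhibit a configuration in $[q_1q_2]_{-|q_1|}$ whose orbit reaches $[w]_0$ after roughly $n$ steps. The \emph{necessity} direction ($w$ occurring in $\gls(f_{18})\Rightarrow w\in P$) is proved contrapositively, splitting according to which of the three defining conditions of $P$ fails.

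For sufficiency, the new difficulty compared with the one-kink case is that, by Theorem~\ref{thJEN}, a configuration with finitely many $1$s collapses to at most one kink, so two kinks cannot simply be parked near position $0$. The plan is instead to append to $q_1q_2$ a word $r=r_n$ carrying a large even number of kinks (a deep enough ancestor, under $f$, of a short word of $B$ such as $11001$, separated from $q_1q_2$ by a long run of $0$s and a short parity word): by Proposition~\ref{prop:no-new-kinks-stable} the kinks of the zero-padded configuration are erased in pairs, so its orbit passes, at a time of order $n$, through a stage carrying exactly two kinks that form a prescribed word of $B$ at a prescribed place and are far apart, while the kinks contributed by $q_1q_2$ annihilate harmlessly to the side. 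From that stage I would run Lemmas~\ref{lm2k1}, \ref{lm2kmov} and~\ref{lm2k2} forward: produce $1101001$ from the $B$-word (Lemma~\ref{lm2k1}), move it to the required position (Lemma~\ref{lm2kmov}), and grow it into $w$ (Lemma~\ref{lm2k2}), composing the resulting inclusions of $\tilde\Sigma$-sets. Since the time at which $[w]_0$ is reached grows without bound in $n$, Lemma~\ref{lemma:combchar} yields $w\in\gls(f_{18})$. The only nonroutine work here is the bookkeeping making the collapse terminate in the desired two-kink configuration with enough time left for the steering step; it is tedious but unproblematic.

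For necessity, recall $\gls(f_{18})\subseteq\limset(f_{18})$ (Lemma~\ref{lem:three-inclusion}), so a word occurring in $\gls(f_{18})$ has an $f^{m}$-preimage word for every $m$. If $b(w)=111$, then since $F^{-1}(1)=\{001,100\}$ and no word of length $5$ has all three of its length-$3$ factors in $\{001,100\}$, the word $111$ has no $f$-preimage at all. If $b(w)=11v11$ with $v$ carrying an odd number of $1$s, then by the inductive fact recorded after Definition~\ref{def:P} any preimage of $11v11$ ends in $00$ (because $v11$ carries an odd number of $1$s), while any preimage of a word whose suffix is $11$ must end in the unique preimage $1001$ of $11$, whose last two symbols are $01$ --- a contradiction, so $11v11$ has no $f$-preimage. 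In both cases $b(w)\notin\limset(f_{18})$, hence $w\notin\gls(f_{18})$.

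The remaining case --- $b(w)=1(100010)^k1001$ or $b(w)=1001(010001)^k1$ for some $k\in\NN$ --- is the heart of the matter and, I expect, the main obstacle: such words \emph{do} have $f$-preimages, indeed arbitrarily long preimage chains, so a preimage count fails and one must use Lemma~\ref{lemma:combchar} directly. The plan is to show that for every seed $s$ and position $i$ there are context words $u,v$ with $f^{n}([usv]_{i-|u|})\cap[w]_0=\emptyset$ for all large $n$. The tool is the backward rigidity exploited in Case~6 of Lemma~\ref{lm2k1} and Case~5 of Lemma~\ref{lm2k2} (Figure~\ref{fig:2k_back_11_1001}): following a \emph{two-kink} preimage chain of $b(w)$ backwards forces the configuration into the same self-similar shape $1(100010)^{\ast}1001$ on a window whose width grows linearly with the number of backward steps, so a configuration that gives $w$ at position $0$ at time $n$ must, around time $0$, agree with this rigid pattern on a window of size $\Theta(n)$, overlapping the context; choosing $u$ incompatible with it then blocks $w$. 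The delicate point is that a preimage of $b(w)$ may carry more than two kinks, for which the rigidity does not apply directly: one must argue that the extra kinks either annihilate cleanly --- after which the rigidity takes over from the time only two kinks remain --- or survive long enough to perturb the region where $w$ is to appear, and that in either case $w$ fails to occur at position $0$ for all large $n$. Making this dichotomy precise is the main step left to carry out.
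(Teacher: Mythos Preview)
Your sufficiency argument and the first two necessity cases are correct and match the paper. For sufficiency the paper's construction is slightly cleaner than what you sketch: rather than appending an ``ancestor of a $B$-word'' to $q_1q_2$, it pads $q_1q_2r$ (with $r$ making the kink count even) by a long run of $0$s and a single wide kink $10^{2n}1$ on \emph{each} side, setting $s = 10^{2n}10^{3n}\,q_1q_2r\,0^{3n}10^{2n}1$. After $n$ steps the central block is kinkless and each flanking kink has shrunk to $11$, so $f^n(s)=11\,v\,11\in B$ outright, and Lemmas~\ref{lm2k1}, \ref{lm2kmov}, \ref{lm2k2} apply exactly as you say.

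The genuine gap is in the third necessity case. You invoke the growing-window rigidity of Figure~\ref{fig:2k_back_11_1001}, which indeed only constrains \emph{two-kink} preimage chains, and then flag the many-kink case as ``the main step left to carry out''. The paper bypasses this difficulty entirely with a different, \emph{fixed-window} rigidity that is purely local and hence insensitive to the global kink count. Writing $u=1(100010)^k1001$ and $u'=1001(010001)^k1$, one checks directly that
\[
f^{-1}([u]_j)\cap f(\{0,1\}^\ZZ)\subseteq [u']_{j-1}
\qquad\text{and}\qquad
f^{-1}([u']_{j-1})\cap f(\{0,1\}^\ZZ)\subseteq [u]_j,
\]
i.e.\ \emph{any} preimage of $u$ that is itself an $f$-image contains $u'$ at the shifted position, and vice versa. (The intersection with $f(\{0,1\}^\ZZ)$ is needed: for $k\geq 1$ there is a second local preimage of $u$, but it contains $111$ and so is not an image.) Along any orbit $f^m(x)$ with $m\geq 1$ every configuration is an image, so the occurrence of $u$ or $u'$ at a fixed window $[j-1,j+|u|-1]$ is \emph{preserved under every backward step}. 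Now take any seed $s$ at any position, extend it to a context making the kink count even, and pad so that after $\ell$ steps the window $[0,|w|-1]$ is provably kinkless (Corollary~\ref{coJEN}). If $f^n(x)\in[w]_0$ for some $n\geq\ell$, trace back: at time $\ell$ the same window still contains $u$ or $u'$, both of which have two kinks --- contradiction. No case split on kink count is needed, and your anticipated ``main step'' dissolves.
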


\begin{proof}
First, let $w \notin P$ be a word with two kinks.
If $w$ has a subword $111$ or a subword of the form $11 u 11$ with $u$ containing an odd number of $1$s, then $w$ has no parent, and hence it does not occur in $\gls(f_{18})$.

Suppose then that $w$ has a subword of the form $u = 1 (100010)^k 1001$ for some $k \in \NN$.
We call $u' = 1001 (010001)^k 1$.
These have the property that $f^{-1}([u]_0) \cap f(\{0,1\}^*) \subseteq [u']_{-1}$ and $f^{-1}([u']_{-1}) \cap f(\{0,1\}^*) \subseteq [u]_0$.
We suppose for a contradiction that $w \in \gls(f_{18})$.
Then there exists a seed $s \in \{0,1\}^*$ and a position $p \in \ZZ$ such that for all $a, b \in \{0,1\}^*$, there are infinitely many $n \in \NN$ such that $f^n([asb]_{i-|a|}) \cap [w]_0 \neq \emptyset$.
We may assume that $p \leq 0$ and $|s|-p \geq |w|$.

There exists a word $q$ such that $s q$ contains an even number of kinks.
By Corollary \ref{coJEN}, there exists $\ell \in \NN$ such that $f^\ell(0^\ell s q 0^\ell)$ is kinkless.
Then we have $f^\ell(x)_{[0, |w|-1]} \in \Sigma$ for all $x \in [0^\ell s q 0^\ell]_{p-\ell}$.
However, there should exist $n \geq \ell$ and $x \in [0^\ell s q 0^\ell]_{p-\ell}$ with $f^n(x) \in [w]_0$.
This implies $f^n(x) \in [u]_j$ for some $0 \leq j < |w|-|u|$.
Then for all $m \leq n$, we have $f^{n-m}(x) \in [u]_j$ if $m$ is even, and $f^{n-m}(x) \in [u']_{j-1}$ if $m$ is odd.
When $m = n-\ell$, we obtain a contradiction with $f^\ell(x)_{[0, |w|-1]} \in \Sigma$, since $u$ and $u'$ are two-kink words.
Hence, $w$ does not occur in the generic limit set.

We now prove the other side of the implication.
Let $w \in P$ be a word. We consider the seed $\epsilon$ at position $i = 0$. Let $q_1, q_2 \in \{0,1\}^*$ be words. There exists a word $r \in \{0,1\}^*$ such that $q_1 \cdot q_2 \cdot r$ contains an even number of kinks. According to Corollary \ref{coJEN}, for all large enough $n \in \NN$, all words in $f^n(\Sigma(0^n \cdot q_1 q_2  r \cdot 0^n, -|q_1|-n))$ are kinkless. In the following, we call $s = 1 0^{2n} 1 0^{3n} \cdot q_1 q_2 r \cdot 0^{3n} 1 0^{2n} 1$.

We have $f^n(\Sigma(s)) = \Sigma(f^n(s))$ by Proposition \ref{prop:stable-extensions-f}.
%and $u_0 = 0^{2n} 1 0^n \cdot q_1 q_2 r \cdot 0^n 1 0^{2n}$.
%Finally, if we call $s' = 1 0^{2n} 1 \cdot 0^{3n} \cdot q_1 q_2 r \cdot 0^{3n} \cdot 1 0^{2n} 1$, 
Also, $f^n(s)$ is a two-kink word of the form $11 \cdot v \cdot 11$ where $v \neq (01)^{\lfloor v/2 \rfloor} 0$, so that $f^n(s) \in B$.
According to Lemma \ref{lm2k1}, there exists $n_0 \in \NN$ such that $\Sigma(1101001) \subseteq f^{n_0}(\Sigma(f^n(s))) = f^{n+n_0}(\Sigma(s))$.
Hence, there exists a position $j \in \ZZ$ such that $\Tilde \Sigma(1101001, j) \subseteq f^{n + n_0}(\Tilde \Sigma(s, - |q_1| - 5n - 2))$.

According to Lemma \ref{lm2k2}, there exists $n_1 \in \NN$ and a position $p \in \ZZ$ such that $\Tilde \Sigma(w, p) \subseteq f^{n_1}(\Tilde \Sigma(1101001, 0)$.
According to Lemma \ref{lm2kmov}, there exists $n_2 \in \NN$ such that $\Tilde \Sigma(1101001, -p) \subseteq f^{n_2}(\Tilde \Sigma(1101001, j))$. Hence $\Tilde \Sigma(w, 0) \subseteq f^{n_1 +n_2}(\Tilde \Sigma(1101001, j)$.
Combining all of this, we obtain
\[
\Tilde \Sigma(w, 0) \subseteq f^{n+n_0+n_1+n_2}(\Tilde \Sigma(s, - |q_1| - 5n - 2)) \subseteq f^{n+n_0+n_1+n_2}([q_1 q_2]_{-|q_1|}).
\]
As $n$ can be chosen arbitrarily large, this shows that $w$ occurs in $\gls(f)$.
\end{proof}

\section{Separation of limit sets}

In this section we show that the limit set, generic limit set, and $\mu$-limit set of $f_{18}$ are distinct, for any shift-invariant probability measure $\mu$.
In fact, there is a single word that occurs in $\gls(f_{18})$ but does not occur in any $\limset_\mu(f_{18})$.

\begin{theorem}
  Rule 18 satisfies $\gls(f_{18}) \subsetneq \limset(f_{18})$ and $\gls(f_{18}) \setminus \overline{\bigcup_{\mu \in \meas_\sigma(\{0,1\}^\ZZ)} \limset_\mu(f_{18})} \neq \emptyset$.
\end{theorem}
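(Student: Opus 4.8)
The plan is to exhibit, for each of the two claimed separations, a two‑kink word witnessing it; together with Lemma~\ref{lem:three-inclusion} this makes all three limit sets distinct.

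\textbf{Separating the generic limit set from the limit set.}
I would take $w_1 = 11001 = 1 \cdot (100010)^0 \cdot 1001$. It has exactly two kinks and is excluded from $P$ by the third clause of Definition~\ref{def:P}, so by Theorem~\ref{thm:two-kinks} it does not occur in $\gls(f_{18})$. It does occur in $\limset(f_{18})$: one checks that $f_{18}(1001100) = 11001$, $f_{18}(001100101) = 1001100$, $f_{18}(10100110001) = 001100101$, and that this backward chain of words never terminates --- the minimal two‑kink factor of each word in it is $11001$ or its reverse $10011$, and a word with such a factor always admits a preimage of the same kind (a routine verification extending the computations above, in the spirit of the backward analysis in the proof of Lemma~\ref{lm2k2}). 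Hence for every $n$ there is a configuration whose $n$‑th $f_{18}$‑image contains $11001$, so the sets $C_n = [11001]_0 \cap f_{18}^{\,n}(\{0,1\}^\ZZ)$ are nonempty; since they are closed and $f_{18}^{\,n+1}(\{0,1\}^\ZZ) \subseteq f_{18}^{\,n}(\{0,1\}^\ZZ)$, they form a decreasing chain of nonempty compact sets, so $\bigcap_n C_n = [11001]_0 \cap \limset(f_{18}) \neq \emptyset$.

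\textbf{A word of the generic limit set in no $\mu$‑limit set.}
Here I would look for a two‑kink word $w_2 \in P$ --- hence $w_2 \in \gls(f_{18})$ by Theorem~\ref{thm:two-kinks} --- such that $\mu\bigl(f_{18}^{-n}([w_2]_0)\bigr) \to 0$ for every shift‑invariant $\mu$, which by the criterion recalled just before Lemma~\ref{lemma:combchar} is precisely the statement that $w_2$ occurs in no $\limset_\mu(f_{18})$. The mechanism should be rigidity of the iterated preimages: if $w_2$ is chosen so that it, and every word appearing on its backward two‑kink orbit, is stable and not of the degenerate form $0^\alpha(10)^n1^\beta$, then Corollary~\ref{cor:unique-extensions} together with the uniqueness of two‑kink preimages (the computation after Definition~\ref{def:P}) forces that orbit to be unique: the $n$‑step preimage is a single word $q_n = (\text{kink})\cdot s_n\cdot(\text{kink})$ in which the two kinks have drifted $\Theta(n)$ apart and the kinkless strip $s_n$ between them is completely determined. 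Since $f_{18}^{\,n}\mu$ is carried by $f_{18}^{\,n}(\{0,1\}^\ZZ)$, a configuration seen by $f_{18}^{\,n}\mu$ that shows $w_2$ must be the $n$‑fold image of a configuration whose central window equals $q_n$; combined with the fact that the ancestral kink worldlines stay in a window of size $O(n)$, this bounds $\mu\bigl(f_{18}^{-n}([w_2]_0)\bigr)$ by the $\mu$‑mass of ``the word $s_n$ occurs somewhere in $[-O(n),O(n)]$''. The step I expect to be the main obstacle is to choose $w_2$ so that $s_n$ is forced to be \emph{aperiodic} in a strong enough sense --- its occurrences spreading out super‑linearly, so that the bound above tends to $0$ for every shift‑invariant $\mu$, including measures carried by spatially periodic configurations; informally $s_n$ records the non‑eventually‑periodic rule‑$90$ dynamics between the two separating kinks, but turning this into a proof will require a careful structural and parity analysis, of the same flavour as the backward computations of Lemma~\ref{lm2k1} (Figures~\ref{fig:2k_back_11_1001} and~\ref{fig:2k_back_11_11}).

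Granting this, $w_2 \in \gls(f_{18})$ while $w_2$ belongs to no $\limset_\mu(f_{18})$, so $\gls(f_{18}) \setminus \overline{\bigcup_{\mu} \limset_\mu(f_{18})} \neq \emptyset$. Together with the first part and the inclusion $\limset_\mu(f_{18}) \subseteq \limset(f_{18})$ from Lemma~\ref{lem:three-inclusion}, this makes $\limset(f_{18})$, $\gls(f_{18})$ and every $\limset_\mu(f_{18})$ pairwise distinct.
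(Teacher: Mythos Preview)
Your first part works but is more laborious than needed: the paper simply observes that the spatially periodic point $x = {}^\infty(1001)^\infty$ satisfies $f_{18}^2(x) = x$, so $10011$ (and your $11001$) occurs in $\limset(f_{18})$ with no backward‑chain bookkeeping. Your chain is in fact just the orbit of this periodic point, so the ``routine verification'' you defer is subsumed by a one‑line check.

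Your second part, however, has a real gap. The uniqueness you invoke --- Corollary~\ref{cor:unique-extensions} and the remark after Definition~\ref{def:P} --- only constrains \emph{two‑kink} preimages. A configuration in $f_{18}^{-n}([w_2]_0)$ may carry arbitrarily many extra pairs of kinks that annihilate during the $n$ steps, so the backward orbit is nowhere near a single word $q_n$, and you cannot bound $\mu(f_{18}^{-n}([w_2]_0))$ by the measure of one pattern. Even modulo this, the aperiodicity step is left as a hope rather than an argument, and it would have to rule out \emph{every} shift‑invariant $\mu$, including ones supported on sequences in which your $s_n$ occur with positive density.

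The paper's argument is completely different and avoids preimage analysis. Take $w = 001101100$: one checks $b(w) = 11011$, so $w \in P$ and hence $w$ occurs in $\gls(f_{18})$. Since $f_{18}(w) = 1000001$, an occurrence of $w$ forces the destruction of a kink on the next step. Writing $d_n = \sum_{k \ge 0}(f_{18}^n\mu)(10^{2k}1)$ for the kink density at time $n$, Proposition~\ref{prop:no-new-kinks-stable} and shift‑invariance give $d_{n+1} \le d_n$, and $(f_{18}^n\mu)(w) \le d_n - d_{n+1}$. Telescoping, $\sum_{n}(f_{18}^n\mu)(w) \le d_0 \le 1$, so $(f_{18}^n\mu)(w) \to 0$ for \emph{every} shift‑invariant $\mu$. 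This monotonicity‑plus‑telescoping idea is the missing ingredient.
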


\begin{proof}
We first prove that $\gls(f_{18})$ is properly contained in the limit set.
The word $10011$ occurs in the latter, since the spatially periodic point $x = {}^\infty(1001)^\infty$ satisfies $f_{18}^2(x) = x$.
By Theorem \ref{thm:two-kinks}, it does not occur in $\gls(f_{18})$.

For the other claim, take any shift-invariant measure $\mu$, and for $n \in \NN$, denote by
\[ d_n = \sum_{k \in \NN} (f_{18}^n \mu)(1 0^{2k} 1) \]
the probability of having the left border of a kink at a given coordinate after $n$ iterations of $f_{18}$.
By Proposition \ref{prop:no-new-kinks-stable}, $f_{18}$ cannot create new kinks, and by shift-invariance of $\mu$ we then have $d_{n+1} \leq d_n$.
The probability of seeing the left border of a kink that will be destroyed on the next step is exactly $d_n - d_{n+1}$.

Consider the word $w = 001101100$.
Since $f_{18}(w) = 1000001$ results in the destruction of two kinks, we have $\sum_{n \in \NN} (f_{18}^n \mu)(w) \leq \sum_{n \in \NN} d_n - d_{n+1} = \lim_{n \to \infty} d_0 - d_n \leq 1$.
Hence $\lim_{n \to \infty} (f_{18}^n \mu)(w) = 0$, so $w$ does not occur in $\limset_\mu(f_{18})$.
By Theorem \ref{thm:two-kinks}, it occurs in $\gls(f_{18})$.
\end{proof}

Let $\mu$ be the uniform Bernoulli measure.
In \cite[Conjecture 3]{LIN84}, Lind conjectures that the probability $d_n$ in the above proof is approximately $(8 \pi D n)^{-1/2}$, where the ``diffusion coefficient'' $D \approx 1/2$.
The weaker claim that $d_n \to 0$ is equivalent to the condition that no kinks occur in the $\mu$-limit set $\limset_\mu(f_{18})$.
By \cite[Proposition 2.10]{BDS10}, the $\mu$-limit set of every cellular automaton $g$ is \emph{shift-recurrent}, meaning that if $w$ occurs in $\limset_\mu(g)$, then there exists a word $u$ such that $wuw$ occurs in $\limset_\mu(g)$ as well.
Hence, for the weaker claim it would be enough to prove that $\limset_\mu(f_{18}) \subseteq \gls(f_{18})$ and that for some $k \in \NN$, no word with $k$ kinks occurs in the generic limit set $\gls(f_{18})$.
We have showed that the latter claim is false for $k \leq 2$.

\bibliographystyle{plain}
\bibliography{bibl}

\end{document}